\newtheorem{remark}{Remark}[section]
\newcommand{\ssymbol}[1]{^{\@fnsymbol{#1}}}
\title{On some tensor tubal-Krylov subspace methods using  the T-product}
\author{A. El Ichi\thanks{Laboratoire de Mathématiques, Informatique et Applications, S\'ecurit\'e de l'Information LABMIA-SI, University Mohamed V, Rabat Morocco} \and K. Jbilou \thanks{LMPA, 50 rue F. Buisson, ULCO Calais, France; Mohammed VI Polytechnic University, Green City, Morocco; jbilou@univ-littoral.fr }  \and R. Sadka\footnotemark[1]}
\date{}
\begin{document}

	\maketitle

\maketitle

\begin{abstract}

\noindent   In this paper, we will introduce some new tubal-Krylov subspace methods for solving some linear tensor equations. Using the well known tensor T-product, we will in particular define the tensor tubal-global GMRES that could be seen as a generalization of the  global GMRES. We also give a new tubal-version of the tensor Golub-Kahan algorithm. To this end, we first introduce some new tensor-tensor products and also some new definitions. The presented numerical tests compare the two methods and show the efficiency of the proposed procedures..\\

\noindent {\bf Keywords:} Arnoldi, Krylov subspaces, GMRES, Tensors, T-products.\\

\noindent {\bf AMS Subject Classifications:} 65F10; 15A69; 65F22
\end{abstract}

\section{Introduction}
We consider the following tensor linear equation 
	\begin{equation}\label{eq1}
	\mathscr{A} \star\mathscr{X} = \mathscr{B},
	\end{equation}
	where $\mathscr{A}  $, $\mathscr{X}$,  and $\mathscr{B}$ are  three-way tensors  and $\star$   is the T-product  introduced   in \cite{klimer3,klimer2}.\\ A tensor is  a multidimensional array of data. The number of indices of a tensor is called modes or ways.  	Notice that a scalar can be regarded as a zero mode tensor, first mode tensors are vectors and matrices are second mode tensor. The order of a tensor is the dimensionality of the array needed to represent it, also known as
ways or modes. 
For a given 3-mode (or order-3) tensor $ \mathscr {X}\in \mathbb{R}^{n_{1}\times n_2 \times n_{3}}$, the notation $x_{i_{1},{i_2},i_{3}}$ for the element $\left(i_{1},i_2,i_{3} \right) $ of the tensor $\mathscr {X}$. \\
Fibers are the higher-order analogue of matrix rows and columns. A fiber is
defined by fixing all the indexes  except  one. A matrix column is a mode-1 fiber and a matrix row is a mode-2 fiber. Third-order tensors have column, row and tube fibers/scalars. An element ${\rm c }\in \mathbb{R}^{1\times 1 \times n}$ is called a  fiber/scalar tube of length $n$. More details are found in \cite{Jbiloubeik1,kolda,klimer3} .  \\
Tensors have been widely used and applied in different areas and especially in color image, video restoration or compression \cite{elguide1,elguide2,klimer2,klimer3}. Other applications of tensors  in  modern sciences, e.g.,  signal processing \cite{lb},  data mining \cite{lxnm}, tensor complementarity problems, computer vision,  see  \cite{ smile} for  more details. Recent tensor approaches were used  for numerically solving PDEs in \cite{dwwm}.\\
In the present work, we develop  the    tensor tubal  global GMRES method (TGGMRES)   for solving  tensor   system of  equations (\ref{eq1}) that could be considered as a generalisation of the global GMRES developed in \cite{Jbilou} and also a tensor tubal Golub Kahan method.  To this end, we introduce some new tensor products with some new related algebraic properties.

The paper is organized as follows: In Section 2, we give notations and definitions related to the T-product. In Section 3, we develop  some new tensor products and give some algebraic properties. After defining a tubal-global QR factorisation algorithm  we propose  in Section 4,  the tensor tubal-global Arnoldi process that allows us to introduce the tubal-global GMRES method. Section 5 is devoted to the tensor tubal Golub Kahan method. Finally, some numerical tests are reported in Section 6.

%%%%%%%%%%%%%%%%%%%%%%%%%%%%%%%%%%%%%%%%%%%%%%%%%%%%%%%%%%%%%%%%%%%%%%%%%%%%%%%%%%%%%%%%%%%%
%%%%%%%%%%%%%%%%%%%%%%%%%%%%%%%%%%%%%%%%%%%%%%%%%%%%%%%%%%%%%%%%%%%%%%%%%%%%%%%%%%%%%%%%%%%%

%Section 2.  Preliminaries 

%%%%%%%%%%%%%%%%%%%%%%%%%%%%%%%%%%%%%%%%%%%%%%%%%%%%%%%%%%%%%%%%%%%%%%%%%%%%%%%%%%%%%%%%%%%%
%%%%%%%%%%%%%%%%%%%%%%%%%%%%%%%%%%%%%%%%%%%%%%%%%%%%%%%%%%%%%%%%%%%%%%%%%%%%%%%%%%%%%%%%%%%%

\section{Definitions and notations}\label{sec:section2}

In this section we recall some definitions and properties of  the T-product which is based on the Discrete Fourier Transformation (DFT)  defined  on 
a vector $v \in {\mathbb{C}}^n$ as follows 
	\begin{equation}
	\label{dft1}
	\tilde v= F_nv \in {\mathbb{C}}^n,
	\end{equation}
	where $F_n$ is the Fourier complex $n \times n$   matrix whose components are  given by  
	\begin{equation}
	\label{dft2}
	(F_n)_{ij}=  \omega^{(i-1)(j-1)},\; i,j=1,\ldots,n,
	\end{equation}
where $\omega=e^{\frac{-2 \pi i}{n}}$ with $i^2=-1$.
The cost of computing the vector $\tilde v$ directly from \eqref{dft1} is $O(n^2)$. Using the Fast Fourier Transform, it will cost only  $O(nlog(n))$ and this makes the FFT very fast for large problems.

In this part, we briefly review some concepts and notations related to the T-Product, see \cite{braman,klimer3,klimer2} for more details. 	Let $\mathscr {A} \in \mathbb{R}^{n_{1}\times n_{2}\times n_{3}} $ be a third-order tensor, then the operations ${\rm bcirc}$,   unfold and fold are defined by
	$${\rm bcirc}(\mathscr {A})=\left( {\begin{array}{*{20}{c}}
		{{A_1}}&{{A_{{n_3}}}}&{{A_{{n_{3 - 1}}}}}& \ldots &{{A_2}}\\
		{{A_2}}&{{A_1}}&{{A_{{n_3}}}}& \ldots &{{A_3}}\\
		\vdots & \ddots & \ddots & \ddots & \vdots \\
		{{A_{{n_3}}}}&{{A_{{n_{3 - 1}}}}}& \ddots &{{A_2}}&{{A_1}}
		\end{array}} \right)   \in {\mathbb{R}}^{ n_1n_3 \times n_2n_3},$$
$${\rm unfold}(\mathscr {A} ) = \begin{pmatrix}
A_{1},
A_{2},
\ldots
A_{n_{3}}\end{pmatrix}^T \in \mathbb{R}^{n_{1}n_{3}\times n_{2}},  \qquad {\rm fold}({\rm unfold}(\mathscr {A}) ) =  \mathscr {A}.$$ 
Let $\widetilde {\mathscr {A}}$ be the tensor obtained by applying the DFT on all the 3-mode tubes of the tensor $\mathscr {A}$. With the Matlab command ${\tt fft}$, we have
\begin{equation*}
\widetilde {\mathscr {A}}= {\tt fft}(\mathscr {A},[\, ],3)\; \; {\rm and }\;\; \mathscr {A}={\tt ifft} (\widetilde {\mathscr {A}}, [ \,],3),
\end{equation*}
where ${\tt ifft}$ denotes the Inverse Fast Fourier Transform.  The tensor $\widetilde {\mathscr {A}}$ can  also be obtained   using  the  3-mode product \cite{klimer2}, as  follows 
\begin{equation}\label{fftifft}
\widetilde {\mathscr {A}}=   {\mathscr {A}} \times_3 F_{n_3}\;\; {\rm and}\; \;{\mathscr {A}}= \widetilde {\mathscr {A}}\times_3 F_{n_3}^{-1}
\end{equation}   where  $\times_3$ is the  3-mode product defined in \cite{kolda}.  
\noindent 	Let ${\bf A}$ be the block diagonal matrix 
	\begin{equation}\label{dft9}
	{\bf A}= {\rm BlockDiag}(\widetilde {\mathscr {A}})= \left (
	\begin{array}{cccc}
	{A}^{(1)}& &&\\
	& {A}^{(2)}&&\\
	&&\ddots&\\
	&&&{A}^{(n_3)}\\
	\end{array}
	\right),
	\end{equation}
where  the matrices ${A}^{(i)}$'s are the frontal slices of the tensor $\widetilde {\mathscr {A}}$.
The block circulant matrix ${\rm bcirc}(\mathscr {A})$ can  be block diagonalized by using the DFT and this gives
\begin{equation}\label{dft8}
(F_{n_3} \otimes I_{n_1})\, {\rm bcirc}(\mathscr {A})\, 	(F_{n_3}^{*} \otimes I_{n_2})={\bf A},
\end{equation}	 	
As noticed in \cite{klimer2,lu}, the diagonal blocks of the matrix ${\bf A}$ satisfy the following property 
	\begin{equation}
	\label{f1}
	\left \{
	\begin{array}{ll}
	{A}^{(1)} \in {\mathbb{R}}^{n_1 \times n_2}\\
	conj({A}^{(i)})= A^{(n_3-i+2)},\;  i=2,\ldots,\lfloor \displaystyle \frac{{n_3}+1}{2} \rfloor\\
	\end{array}
	\right.
	\end{equation}
where 	$conj ({A}^{(i)})$ is the complex conjugate of the matrix ${A}^{(i)}$. Next we recall the definition of the T-product; see \cite{klimer2}.	

\medskip

\begin{definition}
	The T-product denoted by $\star $ between  two tensors
	$\mathscr {A} \in \mathbb{R}^{n_{1}\times n_{2}\times n_{3}} $ and $\mathscr {B} \in \mathbb{R}^{n_{2}\times m\times n_{3}} $ is the  ${n_{1}\times m\times n_{3}}$ tensor  given by:	
	$$\mathscr {A} \star \mathscr {B}={\rm fold}({\rm bcirc}(\mathscr {A}){\rm unfold}(\mathscr {B}) ).$$
\end{definition}
\noindent Notice that from the relation \eqref{dft9}, we can show that the   product  $\mathscr {C}=\mathscr {A} \star \mathscr {B}$ is equivalent to ${\bf C}= {\bf A}  {\bf B}$ where ${\bf A}= {\rm BlockDiag}(\widetilde {\mathscr {A}})$ given in \eqref{dft9} and ${\bf B}= {\rm BlockDiag}(\widetilde {\mathscr {B}})$. So, the efficient way to compute the T-product is to use Fast Fourier Transform (FFT). \\
Using the relation \eqref{f1}, the following algorithm allows us to compute in an efficient way the T-product of the tensors $\mathscr {A}$ and $\mathscr {B}$ .
\begin{algorithm}[H]
	\begin{algorithmic}[1]
		\REQUIRE 	Input  $\mathscr {A} \in \mathbb{R}^{n_{1}\times n_{2}\times n_{3}} $ and $\mathscr {B} \in \mathbb{R}^{n_{2}\times m\times n_{3}}$.
			\ENSURE: $ \mathscr {C}= \mathscr {A} \star \mathscr {B}  \in \mathbb{R}^{n_{1}\times m \times n_{3}}.$	
		\STATE Compute $\mathscr {\widetilde A}={\tt fft}(\mathscr {A},[ ],3)$ and $\mathscr {\widetilde B}={\tt fft}(\mathscr {B},[ ],3)$;% 
		%\STATE   $\beta=V(i_0,j_0)$, $V_1=V/\beta$, $p(1,1)=i_0$ and $p(1,2)=j_0$;
		\FOR{ $i=1,\dots,n_3$}
		\STATE	 Compute each frontal slices of $\mathscr {\widetilde C}$
	 	$$C^{(i)}= \left \{
	 	\begin{array}{ll}
		 	A^{(i)} B^{(i)} , \quad \quad\quad  i=1,\ldots,\lfloor \displaystyle \frac{{n_3}+1}{2} \rfloor\\
		 	conj({C}^{(n_3-i+2)}),\quad \quad i=\lfloor \displaystyle \frac{{n_3}+1}{2} \rfloor+1,\ldots,n_3,
		 	\end{array}
	 		\right.$$
		\ENDFOR		
		\STATE Compute $\mathscr {C}={\tt ifft}(\widetilde {C},[\,],3)$.
	\end{algorithmic} 
	\caption{Computing the  T-product via FFT}
	\label{algo}
\end{algorithm}

\noindent For the T-product, we recall the following definitions; see \cite{klimer2} for more details.

\begin{definition}.\label{identens}
	\begin{enumerate}
		\item The identity tensor $\mathscr{I}_{n_{1}n_{1}n_{3}} $ is the tensor whose first frontal slice is the identity matrix $I_{n_1n_1}$ and the other frontal slices are all zeros.
		\item 	An $n_{1}\times n_{1} \times n_{3}$ tensor $\mathscr{A}$ is invertible, if there exists a tensor $\mathscr{X}$ of order  $n_{1}\times n_{1} \times n_{3}$  such that
		$$\mathscr{A}  \star \mathscr{X}=\mathscr{I}_{ n_{1}  n_{1}  n_{3}} \qquad \text{and}\qquad \mathscr{X}  \star \mathscr{A}=\mathscr{I}_{ n_{1}  n_{1}  n_{3}}.$$
		In that case, we set $\mathscr{X}=\mathscr{A}^{-1}$. 	It is clear that 	$\mathscr{A}$ is invertible if and only if   ${\rm bcirc}(\mathscr{A})$ is invertible.
		\item The transpose of $\mathscr{A}$  is obtained by transposing each of the frontal slices and then reversing the order of transposed frontal slices 2 through $n_3$. 
		\item If $\mathscr {A}$, $\mathscr {B}$ and $\mathscr {C}$ are tensors of appropriate orders, then
		$$(\mathscr {A} \star \mathscr {B}) \star \mathscr {C}= \mathscr {A} \star (\mathscr {B} \star \mathscr {C}).$$
		\item Suppose that $\mathscr {A}$ and $\mathscr {B}$ are two tensors such $\mathscr {A} \star \mathscr {B}$ and $ \mathscr {B}^T \star \mathscr {A}^T$ are  defined. Then  $$(\mathscr {A} \star \mathscr {B})^T= \mathscr {B}^T \star \mathscr {A}^T.$$
	\end{enumerate}
\end{definition}

\begin{definition} Let 	$\mathscr{A}$ and $\mathscr{B}$ two tensors in $\mathbb{R}^{n_1 \times n_2 \times n_3}$. Then
	\begin{enumerate}
		\item The scalar inner product is defined by
		$$\langle \mathscr{A}, \mathscr{B} \rangle = \displaystyle \sum_{i_1=1}^{n_1} \sum_{i_2=1}^{n_2}  \sum_{i_3=1}^{n_3} a_{i_1 i_2 i_3}b_{i_1 i_2 i_3}.$$
		\item The associated Frobenius norm  is defined by
		$$ \Vert \mathscr{A} \Vert_F=\displaystyle \sqrt{\langle  \mathscr{A} ,  \mathscr{A}  \rangle}.$$
	\end{enumerate}
\end{definition}
\medskip
\begin{remark}
	Another interesting way for computing the scalar product $\langle \mathscr{A}, \mathscr{B} \rangle $ and the associated Frobenius norm is as follows:
	$$\langle \mathscr{A}, \mathscr{B} \rangle = \displaystyle \frac{1}{n_3}  \langle  {\bf A}, {\bf B} \rangle\; {\rm and} \; \Vert \mathscr{A} \Vert_F= \displaystyle \frac{1}{\sqrt{n_3}} \Vert {\bf A} \Vert_F,$$
	where the block diagonal matrix $\bf A$  is defined by \eqref{dft9}.
\end{remark}

\begin{definition}
	\begin{enumerate}
		\item An $n_{1}\times n_{1} \times n_{3}$ tensor  $\mathscr{Q}$  is orthogonal if
		$$\mathscr{Q}^{T}   \star  \mathscr{Q}=\mathscr{Q} \star \mathscr{Q}^{T}=\mathscr{I}_{ n_{1}  n_{1}  n_{3}}.$$		
		\item 	A tensor is called f-diagonal if its frontal slices are orthogonal matrices. It is called upper triangular if all its frontal slices are upper triangular. 
		
	\end{enumerate}
\end{definition}

\begin{definition}\label{bloctens0}\cite{miaoTfunction} % voir article Generalized Tensor Function via the Tensor Singular Value page 24
	Let  $\mathscr{A}   \in {\mathbb R}^{n_{1}\times m_{1} \times n_{3}} $, $\mathscr{B}\in {\mathbb R}^{n_{1}\times m_{2} \times n_{3}}$, $\mathscr{C}   \in {\mathbb R}^{n_{2}\times m_{1} \times n_{3}} $ and   $\mathscr{D}\in {\mathbb R}^{n_{2}\times m_{2} \times n_{3}}$ be  tensors. The block tensor
		$$\left[ {\begin{array}{*{20}{c}}
			{\mathscr{A}}&{\mathscr{B}} \\
			{\mathscr{C}}&{\mathscr{D}} \\
			\end{array}} \right]\in {\mathbb R}^{(n_{1}+n_2)\times (m_1+m_{2}) \times n_{3}} $$
		is defined by compositing the frontal slices of the four tensors.
	\end{definition}
	\begin{proposition} \label{propoblock}
		Let $\mathscr{A},\mathscr{A}_1   \in {\mathbb R}^{n \times s \times n_{3}} $, $\mathscr{B},\mathscr{B}_1\in {\mathbb R}^{n \times p \times n_{3}}$, $\mathscr{A}_2   \in {\mathbb R}^{\ell\times s \times n_{3}} $,  $\mathscr{B}_2\in {\mathbb R}^{\ell\times p \times n_{3}}$, $\mathscr{C}    \in {\mathbb R}^{s \times n \times n_{3}}$, $\mathscr{D}    \in {\mathbb R}^{p \times n \times n_{3}}$  and  $\mathscr{F}    \in {\mathbb R}^{n \times n \times n_{3}} $ . Then 
		\begin{enumerate}
			\item $\mathscr {F} \star	\left[\mathscr {A} \;\mathscr {B}\right]=	\left[ \mathscr {F}\star \mathscr {A} \;\;\mathscr {F}\star\mathscr {B}\right]\in \mathbb{R}^{ n\times (s+p)\times n_3  }$
			\item $\begin{bmatrix}
			\mathscr {C} \\
			\mathscr {D} 
			\end{bmatrix}\star \mathscr {F}=\begin{bmatrix}
			\mathscr {C}\star \mathscr {F} \\
			\mathscr {D} \star \mathscr {F}
			\end{bmatrix}\in \mathbb{R}^{ (s+p)    \times n\times n_3}$
			\item $\left[ \mathscr {A} \;\mathscr {B}\right]\star\begin{bmatrix}
			\mathscr {C} \\
			\mathscr {D} 
			\end{bmatrix}=\mathscr {A}\star\mathscr {C}+\mathscr {B}\star \mathscr {D}\in \mathbb{R}^{   n\times n \times n_3}$\\
			\item $\begin{bmatrix}
			\mathcal {A}_1&\mathcal {B}_1 \\
			\mathcal {A}_2&\mathcal {B}_2
			\end{bmatrix}\star  \begin{bmatrix}
			\mathscr {C} \\
			\mathscr {D} 
			\end{bmatrix}=\begin{bmatrix}
			\mathscr {A}_1\star\mathscr {C}+\mathscr {B}_1\star\mathscr {D} \\
			\mathscr {A}_2\star\mathscr {C}+\mathscr {B}_2\star\mathscr {D}
			\end{bmatrix}\in \mathbb{R}^{  (\ell+n)   \times n\times n_3}$
		\end{enumerate}
	\end{proposition}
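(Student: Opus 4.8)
The plan is to reduce every one of the four identities to a one‑line identity for ordinary block matrix multiplication, exploiting the remark made just after the definition of the T‑product: $\mathscr{C}=\mathscr{A}\star\mathscr{B}$ holds if and only if $\mathbf{C}=\mathbf{A}\mathbf{B}$, where $\mathbf{A}={\rm BlockDiag}(\widetilde{\mathscr{A}})$ and similarly for $\mathbf{B}$ and $\mathbf{C}$. The same equivalence, together with linearity of the DFT, shows that the block‑diagonal matrix attached to a sum of tensors is the sum of the corresponding block‑diagonal matrices.

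The first step is to check that forming a block tensor in the sense of Definition~\ref{bloctens0} commutes with the third‑mode DFT. This is true because the $3$‑mode tubes of a concatenated tensor are exactly the $3$‑mode tubes of its blocks, listed together, and ${\tt fft}(\cdot,[\,],3)$ acts tube by tube; hence $\widetilde{[\mathscr{A}\ \mathscr{B}]}=[\widetilde{\mathscr{A}}\ \widetilde{\mathscr{B}}]$, $\widetilde{\left[\begin{smallmatrix}\mathscr{C}\\ \mathscr{D}\end{smallmatrix}\right]}=\left[\begin{smallmatrix}\widetilde{\mathscr{C}}\\ \widetilde{\mathscr{D}}\end{smallmatrix}\right]$, and the analogue holds for a $2\times2$ block tensor. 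Reading off frontal slices, the $i$‑th frontal slice of $[\mathscr{A}\ \mathscr{B}]$ is $[A^{(i)}\ B^{(i)}]$, that of $\left[\begin{smallmatrix}\mathscr{C}\\ \mathscr{D}\end{smallmatrix}\right]$ is $\left[\begin{smallmatrix}C^{(i)}\\ D^{(i)}\end{smallmatrix}\right]$, and so on; equivalently, the block‑diagonal matrix associated with a block tensor is the block‑diagonal matrix whose $i$‑th diagonal block is the corresponding block matrix.

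Granting this, each claim is immediate. For item~1 the $i$‑th diagonal block of the block‑diagonal matrix of $\mathscr{F}\star[\mathscr{A}\ \mathscr{B}]$ is $F^{(i)}[A^{(i)}\ B^{(i)}]=[F^{(i)}A^{(i)}\ F^{(i)}B^{(i)}]$, which is precisely the $i$‑th diagonal block of the block‑diagonal matrix of $[\mathscr{F}\star\mathscr{A}\ \ \mathscr{F}\star\mathscr{B}]$; applying the equivalence in the reverse direction gives the identity. Item~2 is the same computation with $\left[\begin{smallmatrix}C^{(i)}\\ D^{(i)}\end{smallmatrix}\right]F^{(i)}=\left[\begin{smallmatrix}C^{(i)}F^{(i)}\\ D^{(i)}F^{(i)}\end{smallmatrix}\right]$; item~3 follows from $[A^{(i)}\ B^{(i)}]\left[\begin{smallmatrix}C^{(i)}\\ D^{(i)}\end{smallmatrix}\right]=A^{(i)}C^{(i)}+B^{(i)}D^{(i)}$ together with the additivity of ${\rm BlockDiag}(\widetilde{\cdot})$; and item~4 is obtained either by partitioning $[\mathscr{A}\ \mathscr{B}]$ and combining items~2 and~3, or directly from the $2\times2$ block product $\left[\begin{smallmatrix}A_1^{(i)}&B_1^{(i)}\\ A_2^{(i)}&B_2^{(i)}\end{smallmatrix}\right]\left[\begin{smallmatrix}C^{(i)}\\ D^{(i)}\end{smallmatrix}\right]$. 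The stated sizes are read off from the sizes of the factors.

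The only step that needs genuine care — and the one I would expect a referee to want spelled out — is the commutation of slice‑composition with the third‑mode DFT in the second paragraph; once that is in place everything else is bookkeeping with block matrices. As an alternative for item~1 one can argue straight from $\mathscr{A}\star\mathscr{B}={\rm fold}({\rm bcirc}(\mathscr{A})\,{\rm unfold}(\mathscr{B}))$, using ${\rm unfold}[\mathscr{A}\ \mathscr{B}]=[{\rm unfold}(\mathscr{A})\ {\rm unfold}(\mathscr{B})]$ and the fact that ${\rm fold}$ commutes with horizontal concatenation; the vertical‑concatenation identities, however, are cleaner through the block‑diagonal picture, since ${\rm bcirc}$ interacts with mode‑$1$ stacking only up to a row permutation.
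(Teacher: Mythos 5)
Your proof is correct. The paper itself states Proposition \ref{propoblock} without any proof (the block-tensor definition is imported from the reference on generalized tensor functions, and the proposition is asserted directly), so there is no authorial argument to compare against; but your route — pass to the Fourier domain, observe that forming a block tensor commutes with the mode-3 DFT because {\tt fft}$(\cdot,[\,],3)$ acts tube by tube and concatenation of frontal slices does not mix tubes, then invoke the equivalence of $\mathscr{C}=\mathscr{A}\star\mathscr{B}$ with $\mathbf{C}=\mathbf{A}\mathbf{B}$ for the block-diagonal matrices and reduce to ordinary block matrix multiplication slice by slice — is exactly the style the paper uses to prove its other product identities (Propositions \ref{kronprop1} and \ref{diamantpropos}). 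You correctly isolate the one step that needs justification, namely $\widetilde{[\mathscr{A}\ \mathscr{B}]}=[\widetilde{\mathscr{A}}\ \widetilde{\mathscr{B}}]$ and its vertical and $2\times 2$ analogues, and your justification of it is sound. Your side remark about the ${\rm bcirc}$/{\rm unfold} alternative is also accurate: ${\rm unfold}([\mathscr{A}\ \mathscr{B}])=[{\rm unfold}(\mathscr{A})\ {\rm unfold}(\mathscr{B})]$ makes item~1 immediate in that picture, whereas vertical stacking only respects ${\rm bcirc}$ up to a simultaneous row permutation, so routing items~2--4 through the block-diagonal Fourier representation is the cleaner choice. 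Nothing is missing.
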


\medskip
\noindent Next, we  introduce now  the \textit{T-trace } transformation.
\begin{definition}%\cite{ElIchi}
	Let $\mathscr{A}$  be a  tensor in  $ \mathbb{R}^{n_{1}\times n_1\times n_{3}}$. The tensor    $\text{T-trace}$ of $\mathscr{A}$ is a fiber-tensor of $\mathbb{R}^{1\times 1\times n_{3}}$ defined such that  its  i-th  frontal slice   is the trace of i-th frontal slice of $\mathscr {\widetilde A}$, for $i=1,\ldots,n_3$.  
\end{definition}
%\medskip
\noindent 	The $\text{T-trace}(\mathscr{A})$ can be computed by  the   following Algorithm .

\begin{algorithm}[H]
	\begin{algorithmic}[1]
		\REQUIRE 	Input  $\mathscr{A}\in {\mathbb R}^{n_{1}\times n_{1} \times n_{3}} $.
			\ENSURE:  ${\rm \bf z} \in {\mathbb R}^{1\times 1 \times n_{3}} $
		\STATE Set $\mathscr {\widetilde A}=\text{{\tt fft}}(\mathscr{ A},[\,],3) $,% 
		%\STATE   $\beta=V(i_0,j_0)$, $V_1=V/\beta$, $p(1,1)=i_0$ and $p(1,2)=j_0$;
		\FOR{ $i=1,\dots,n_3$}
		\STATE	$	 { {\rm \bf z}}^{(i)} =     trace( {A}^{(i)}) $, \quad (trace matrix)
		\ENDFOR
		\STATE $  {\rm \bf z} =\text{{\tt ifft}} ( {\widetilde{{\rm \bf z}}},[\,],3)$.		 
	\end{algorithmic} 
	\caption{Tensor T-trace}
	\label{T-0trace3}
\end{algorithm}

\section{New tensor products}

\noindent In this section, we introduce  some  new tensor products,   that will   be used for simplifying the algebraic computations of the main results.

\begin{definition}\label{dfttubefibr01}  
	%	\begin{enumerate}
	Let  ${\rm \bf a}  \in {\mathbb R}^{1\times 1 \times n_{3}} $ and $\mathscr{B}= \mathscr{B}(i,j,:)={\rm \bf b}_{ij}  \in {\mathbb R}^{m_{1}\times m_{2} \times n_{3}} $ for $i=1,\ldots,m_{1},\;j =1,\ldots,m_{2}$. Then,  the   product $({\rm \bf a}\divideontimes\mathscr{B})$ is an $(m_{1}\times m_{2} \times n_{3})$ tensor defined by 
	\begin{eqnarray*}
			{\rm \bf a}\divideontimes\mathscr{B} =\begin{pmatrix}
				{\rm \bf a}\star {\rm \bf b}_{11}  &\ldots&{\rm \bf a}\star {\rm \bf b}_{1m_{2}}  \\
				\vdots&\ddots&\vdots \\
				{\rm \bf a}\star {\rm \bf b}_{m_{1}1}  &\ldots&{\rm \bf a}\star {\rm \bf b}_{m_{1}m_{2}}  \\
			\end{pmatrix}
	\end{eqnarray*}
\end{definition}
\begin{remark}  The $\divideontimes$ product is a generalisation of the product of a scalar with a matrix where   the tubal-fiber   plays the role of a scalar
\end{remark}

\subsection{The T-Kronecker and the Tubal-inner  products }

\noindent In the following we introduce the T-Kronecker product between two tensors as a generalisation of the classical Kronecker product for matrices. 

\noindent
\begin{definition}
	Let	$\mathscr{A}   \in {\mathbb R}^{n_{1}\times n_{2} \times n_{3}} $ and   $\mathscr{B}\in {\mathbb R}^{m_{1}\times m_{2} \times n_{3}}$. The  T-Kronecker product  $\mathscr{A}\circledast \mathscr{B}$  between $ \mathscr{A}$ and $\mathscr{B}$ is  the $n_{1}m_{1}\times n_{2}m_{2} \times n_{3} $ tensor  given by	:
	\begin{equation*}
	(\mathscr{A}\circledast\mathscr{B})   =       \widetilde{(\mathscr{A}\circledast\mathscr{B})}  \times_3 F_{n_3}^{-1}     
	\end{equation*}
	\noindent where the  $i$-th  frontal  slice ( $i=1,\ldots,n_3 $) of $  \widetilde{(\mathscr{A}\circledast\mathscr{B})} $ is  given  by, 
	\begin{equation*}
		(\mathscr{A}\circledast\mathscr{B})^{(i)} =
		({\mathscr {A}} \times_3 F_{n_3})^{(i)}\otimes (\mathscr{B}\times_3 F_{n_3})^{(i)} %=  % {A}^{(i)}\otimes   {B}^{(i)}     .  
		\end{equation*}
	\noindent    where $\otimes$ is the Kronecker product  between two  matrices. The matrices  $     ({\mathscr {A}} \times_3 F_{n_3})^{(i)}$ and $  ({\mathscr {B}} \times_3 F_{n_3})^{(i)}$  are  the i-th frontal  slices  of $\widetilde {\mathscr {A}}$ and $\widetilde {\mathscr {B}}$,   respectively.
\end{definition}	

\noindent The  T-Kronecker product of two tensors can be computed  by the following algorithm.

\begin{algorithm}[H]
	\begin{algorithmic}[1]
		\REQUIRE 	 $\mathscr{A}\in {\mathbb R}^{n_{1}\times n_{2} \times n_{3}} $ and   $\mathscr{B}\in {\mathbb R}^{m_{1}\times m_{2} \times n_{3}}$  .
		\ENSURE: $(\mathscr{A}\circledast\mathscr{B})$ is  the tensor of  size  $n_{1}m_{1}\times n_{2}m_{2}$.
		\STATE Set $\mathscr {\widetilde A}=\text{{\tt fft}}(\mathscr{ A},[\,],3) $   and   $ \mathscr {\widetilde B}=\text{{\tt fft}}(\mathscr{B},[\,],3) $,% 
		%\STATE   $\beta=V(i_0,j_0)$, $V_1=V/\beta$, $p(1,1)=i_0$ and $p(1,2)=j_0$;
		\FOR{ $i=1,\dots,n_3$}
		\STATE $ (\mathscr{A}\circledast\mathscr{B})^{ (i)} =     ( {A}^{(i)}\otimes   {B}^{(i)})$. 
		\ENDFOR
		\STATE $ (\mathscr{A}\circledast\mathscr{B}) =\text{{\tt ifft}} (\widetilde{(\mathscr{A}\circledast\mathscr{B})},[\,],3)$.
		 
	\end{algorithmic} 
	\caption{Tensor T-Kronecker}
	\label{krontensot}
\end{algorithm}

\noindent\begin{proposition}\label{kronprop1}
	Let $\mathscr{A}\in {\mathbb R}^{n_{1}\times n_{2} \times n_{3}}$, $\mathscr{B}\in {\mathbb R}^ {m_{1}\times m_{2} \times n_{3}}$, $\mathscr{C}\in {\mathbb R}^{n_{2}\times r_{1} \times n_{3}}$ and $\mathscr{D}\in {\mathbb R}^{m_{2}\times r_{2} \times n_{3}}$. Then  we have the following properties
	
	\begin{enumerate}
		\item
		$(\mathscr{A}\circledast\mathscr{B})^{T}=\mathscr{A}^{T}\circledast  \mathscr{B}^{T}$
		\item	  $(\mathscr{A}\circledast\mathscr{B}) \star (\mathscr{C}\circledast\mathscr{D}) =(\mathscr{A}\star\mathscr{C})\circledast (\mathscr{B}\star\mathscr{D})$
		\item
		If $\mathscr{A}\in {\mathbb R}^{n\times n \times z}$ and $\mathscr{B}\in {\mathbb R}^{p\times p \times z}$ are invertible then $(\mathscr{A}\circledast\mathscr{B})^{-1}$ is invertible and we have : 
		\[  (\mathscr{A}\circledast\mathscr{B})^{-1} =\mathscr{A}^{-1}\circledast \mathscr{B}^{-1}\]
	\end{enumerate}
\end{proposition}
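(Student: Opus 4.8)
The plan is to carry out everything in the Fourier domain, where $\star$ becomes frontal-slice-wise matrix multiplication and $\circledast$ becomes frontal-slice-wise Kronecker product, and then to invoke the three corresponding classical identities for ordinary matrices. Concretely, write $A^{(i)},B^{(i)},C^{(i)},D^{(i)}$ for the $i$-th frontal slices of $\widetilde{\mathscr A},\widetilde{\mathscr B},\widetilde{\mathscr C},\widetilde{\mathscr D}$ ($i=1,\dots,n_3$). By the definition of $\circledast$ one has $\widetilde{(\mathscr A\circledast\mathscr B)}^{(i)}=A^{(i)}\otimes B^{(i)}$, and by the block-diagonalisation \eqref{dft8}--\eqref{dft9} (the observation recorded just after the definition of the T-product) one has $\widetilde{(\mathscr A\star\mathscr C)}^{(i)}=A^{(i)}C^{(i)}$. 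Since the mode-$3$ {\tt fft} is invertible, two tensors in $\mathbb{R}^{p\times q\times n_3}$ are equal if and only if all of their Fourier frontal slices agree; hence each of the three claimed identities is equivalent to a family of identities between ordinary (complex) matrices, one for each $i$.

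Before proving (1) I would record the Fourier-domain form of the T-transpose: because the definition of $\mathscr A^{T}$ (transpose each frontal slice, then reverse slices $2,\dots,n_3$) is exactly the one making ${\rm bcirc}(\mathscr A^{T})={\rm bcirc}(\mathscr A)^{T}$, and ${\rm bcirc}(\mathscr A)$ is a real matrix, conjugating \eqref{dft8} and using the symmetry \eqref{f1} yields $\widetilde{(\mathscr A^{T})}^{(i)}=\big(A^{(i)}\big)^{H}$, the conjugate transpose of the $i$-th Fourier slice. Granting this, property (1) is immediate from $(X\otimes Y)^{H}=X^{H}\otimes Y^{H}$: the $i$-th Fourier slice of $(\mathscr A\circledast\mathscr B)^{T}$ is $\big(A^{(i)}\otimes B^{(i)}\big)^{H}=(A^{(i)})^{H}\otimes (B^{(i)})^{H}$, which is precisely the $i$-th Fourier slice of $\mathscr A^{T}\circledast\mathscr B^{T}$.

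Property (2) reduces, slice by slice, to the mixed-product rule for Kronecker products: the $i$-th Fourier slice of $(\mathscr A\circledast\mathscr B)\star(\mathscr C\circledast\mathscr D)$ equals $\big(A^{(i)}\otimes B^{(i)}\big)\big(C^{(i)}\otimes D^{(i)}\big)=\big(A^{(i)}C^{(i)}\big)\otimes\big(B^{(i)}D^{(i)}\big)$, which is the $i$-th Fourier slice of $(\mathscr A\star\mathscr C)\circledast(\mathscr B\star\mathscr D)$. For property (3), I would first note that the identity tensor has $\widetilde{\mathscr I}^{(i)}=I$ for every $i$ (its bcirc is the identity matrix), so $I_{n}\otimes I_{p}=I_{np}$ gives $\mathscr I_{nnz}\circledast\mathscr I_{ppz}=\mathscr I_{(np)(np)z}$; combining this with (2) and Definition~\ref{identens}, $(\mathscr A\circledast\mathscr B)\star(\mathscr A^{-1}\circledast\mathscr B^{-1})=(\mathscr A\star\mathscr A^{-1})\circledast(\mathscr B\star\mathscr B^{-1})=\mathscr I$, and symmetrically for the product in the other order, so $\mathscr A^{-1}\circledast\mathscr B^{-1}$ is the T-inverse of $\mathscr A\circledast\mathscr B$.

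The only point requiring genuine care is the Fourier-domain description of the T-transpose entering (1): one must check that the reversal of frontal slices $2,\dots,n_3$ in the spatial definition is exactly what turns the transpose into a slice-preserving conjugate transpose after the DFT, which is where \eqref{f1} is used. Everything else is bookkeeping plus the three standard Kronecker identities applied one Fourier slice at a time, so I do not anticipate any further obstacle.
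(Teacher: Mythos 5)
Your proof is correct and follows essentially the same route as the paper: pass to the Fourier domain, where $\circledast$ and $\star$ act slice-wise as the matrix Kronecker product and the matrix product, and invoke the corresponding classical matrix identities one frontal slice at a time. You are in fact slightly more careful than the paper's own proof, which writes the Fourier slices of $\mathscr{A}^{T}$ as plain transposes $(A^{(i)})^{T}$ whereas (as you note, using the slice reversal in the definition of the T-transpose together with \eqref{f1}) they are really the conjugate transposes $(A^{(i)})^{H}$ --- a discrepancy that does not affect the validity of the identity since the Kronecker product respects both operations --- and the paper only writes out item (1) explicitly, leaving (2) and (3) to the reader as you essentially do not.
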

\medskip
\noindent
\begin{proof}
	%{\bf Proof}	
	Obviously, the  results stems directly from the properties of the matrix-Kronecker.  In fact, 
	%	\begin{enumerate}
	for $i=1,\ldots,n_3$, we  have  : \begin{align*}
	((\mathscr{A}^{T}\circledast  \mathscr{B}^{T})\times_3 F_{n_3})^{(i)}&= ( (\mathscr{A}\times_3 F_{n_3})^{(i)T} \otimes(\mathscr{B}\times_3 F_{n_3})^{(i)T})\\
	&= ((\mathscr{A}\times_3 F_{n_3})^{(i)} \otimes(\mathscr{B}\times_3 F_{n_3})^{(i)})^T\\
	&=  ((\mathscr{A} \circledast  \mathscr{B} ) ^{T}\times_3 F_{n_3})^{(i)}	\end{align*}
	which shows that 	$(\mathscr{A}\circledast\mathscr{B})^{T} =(\mathscr{A}^{T}\circledast  \mathscr{B}^{T})$.  
	The two other properties are shown in the same way.
\end{proof}	

\noindent Next, we define a  new Tubal-inner product that will be used later.
%	\subsubsection*{  T-inner product}
\noindent\begin{definition}{(Tubal-inner product)}
	For  $\mathscr{X},\mathscr{Y}$ two tensors in $\mathbb{R}^{n_{1}\times s\times n_{3}}$, we define the Tubal-inner  products  $\left\langle {.,.} \right\rangle_T$
	is   %of  $\mathbb{R}^{n_{1}\times 1\times n_{3}}$
	defined by:
	\begin{align}\label{bilinearform3d}
	%\left\langle {\bar{X}, \bar{Y}} \right\rangle:=
	\begin{cases}
	\mathbb{R}^{n_{1}\times s\times n_{3}}\times \mathbb{R}^{n_{1}\times s\times n_{3}}   & \longrightarrow  \mathbb{R}^{1\times 1\times n_{3}} \\
	\qquad  \qquad (\mathscr{X},    \mathscr{Y} ) \qquad   &\longrightarrow  \langle \mathscr{X}, \mathscr{Y} \rangle_T =  \text{T-trace}(\mathscr{X}^{T}\star \mathscr{Y}).
	\end{cases}.
	\end{align}
\end{definition}
\noindent Let  $\mathscr{X}_{1},\ldots, \mathscr{X}_{\ell}$ be a collection of $\ell$ third tensors in
$\mathbb{R}^{n_{1}\times s\times n_{3}}$, if
\begin{align*}
%\label{bilinearform3d1}
\left\langle {\mathscr{X}_{i}, \mathscr{X}_{j}} \right\rangle_T =
\begin{cases}
\alpha_{i}{\rm \bf e}&i= j \\
0&i\neq j,
\end{cases}.
\end{align*}
where  $\alpha_{i}$ is a non-zero scalar and  ${\rm \bf e}$ is the tubal-fiber such that ${\rm unfold}({\rm \bf e})  =(1,0,0\ldots,0)^T$. Then the set $\{\mathscr{X}_{1},\ldots, \mathscr{X}_{\ell}\}$  is said to be a T-orthogonal collection of tensors.  The collection is called T-orthonormal if $\alpha_{i}=1$, $i=1,\ldots,\ell$.

\noindent  Notice that the  $ \text{T-trace} $ of $(\mathscr{X}^{T}\star \mathscr{Y})$   can be expressed via the 3-mode product    as  follows: 
$$ ( \text{T-trace} {(\mathscr{X}^{T}\star \mathscr{Y})}\times_3 F_{n_3})^{(i)} =\text{trace} ( (\mathscr{X}\times_3 F_{n_3})^{(i)T}  (\mathscr{Y}\times_3 F_{n_3})^{(i)})\;\;, i=1,\ldots,n_3  $$

\begin{proposition}\label{proprinnerprodfrob}
	Let $\mathscr{A},\mathscr{B}$ and $\mathscr{C}$ be tensors of $ \mathbb{R}^{n_{1}\times s\times n_{3}}$ and ${\rm \bf a}\in  \mathbb{R}^{1\times 1\times n_{3}}$. Then the  Tubal-inner product satisfies the following properties
	\begin{enumerate}
		\item  $\langle \mathscr{A}, \mathscr{B}+\mathscr{C} \rangle_T$=$\langle \mathscr{A}, \mathscr{B}\rangle_T+\langle \mathscr{A},\mathscr{C} \rangle_T$.
		\item $\langle \mathscr{A}, {\rm \bf a}\divideontimes\mathscr{B}  \rangle_T$=${\rm \bf a}\star \langle \mathscr{A}, \mathscr{B}\rangle_T $.
		\item  
		$\langle \mathscr{A}, \mathscr{X}\star \mathscr{B}  \rangle_T=\langle \mathscr{X}^T\star\mathscr{A},   \mathscr{B}  \rangle_T,$ for $\mathscr{X} \in  \mathbb{R}^{n_{1}\times n_1\times n_{3}}.$
	\end{enumerate}
	\medskip
\end{proposition}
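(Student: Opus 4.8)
The plan is to prove each of the three properties by passing to the Fourier domain, where the Tubal-inner product becomes a slice-wise matrix trace and the $\star$-product becomes a slice-wise matrix product, so that everything reduces to standard linear algebra identities applied independently on each frontal slice. Recall the identity stated just before the proposition: for any $\mathscr{X},\mathscr{Y}\in\mathbb{R}^{n_1\times s\times n_3}$,
\begin{equation*}
(\text{T-trace}(\mathscr{X}^{T}\star\mathscr{Y})\times_3 F_{n_3})^{(i)}=\text{trace}\big((\mathscr{X}\times_3 F_{n_3})^{(i)T}\,(\mathscr{Y}\times_3 F_{n_3})^{(i)}\big),\quad i=1,\ldots,n_3,
\end{equation*}
and that the $\star$-product corresponds to the block-diagonal product ${\bf C}={\bf A}{\bf B}$ in the Fourier domain (Definition of the T-product and the remark following it). Since $\times_3 F_{n_3}$ and $\times_3 F_{n_3}^{-1}$ are inverse linear bijections between tensors and their Fourier representations, it suffices to verify each identity after applying $\times_3 F_{n_3}$ and comparing $i$-th frontal slices.

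For property 1, I would compute the $i$-th Fourier slice of the left-hand side: $\text{trace}\big(A^{(i)T}(B^{(i)}+C^{(i)})\big)$, where the frontal slices of $\widetilde{\mathscr{B}+\mathscr{C}}$ are $B^{(i)}+C^{(i)}$ by linearity of the FFT. By linearity of the trace this equals $\text{trace}(A^{(i)T}B^{(i)})+\text{trace}(A^{(i)T}C^{(i)})$, which is precisely the $i$-th Fourier slice of $\langle\mathscr{A},\mathscr{B}\rangle_T+\langle\mathscr{A},\mathscr{C}\rangle_T$; applying $\times_3 F_{n_3}^{-1}$ gives the result. For property 3, the $i$-th Fourier slice of $\langle\mathscr{A},\mathscr{X}\star\mathscr{B}\rangle_T$ is $\text{trace}\big(A^{(i)T}(X^{(i)}B^{(i)})\big)$, using that the Fourier slices of $\mathscr{X}\star\mathscr{B}$ are $X^{(i)}B^{(i)}$; by the cyclic/transpose property of the matrix trace this equals $\text{trace}\big((X^{(i)T}A^{(i)})^{T}B^{(i)}\big)$, and since the Fourier slices of $\mathscr{X}^{T}\star\mathscr{A}$ are $X^{(i)T}A^{(i)}$ (using that transposition of a tensor corresponds in the Fourier domain to conjugate-transposition of slices, but the relevant slice-product structure is $(\mathscr{X}^T\star\mathscr{A})$ having $i$-th slice $X^{(i)T}A^{(i)}$ as follows from the T-product-in-Fourier identity), this is the $i$-th Fourier slice of $\langle\mathscr{X}^{T}\star\mathscr{A},\mathscr{B}\rangle_T$.

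Property 2 is the one requiring the most care, since it mixes the $\divideontimes$ product with the $\star$ product on tubal-fibers, and one must check that $\divideontimes$ also diagonalizes in the Fourier domain. From Definition \ref{dfttubefibr01}, $({\rm\bf a}\divideontimes\mathscr{B})(k,l,:)={\rm\bf a}\star\mathscr{B}(k,l,:)$, so entrywise in the Fourier domain $({\rm\bf a}\divideontimes\mathscr{B})$ has $i$-th frontal slice equal to $\hat a^{(i)}\,B^{(i)}$, where $\hat a^{(i)}$ is the scalar $i$-th Fourier value of ${\rm\bf a}$ (a tube $\star$-product with a matrix acting entrywise becomes scalar multiplication slice-wise). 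Then the $i$-th Fourier slice of $\langle\mathscr{A},{\rm\bf a}\divideontimes\mathscr{B}\rangle_T$ is $\text{trace}\big(A^{(i)T}(\hat a^{(i)}B^{(i)})\big)=\hat a^{(i)}\,\text{trace}(A^{(i)T}B^{(i)})$, which is exactly the $i$-th Fourier slice of ${\rm\bf a}\star\langle\mathscr{A},\mathscr{B}\rangle_T$ (product of the tube ${\rm\bf a}$ with the tube $\langle\mathscr{A},\mathscr{B}\rangle_T$ being scalar multiplication slice-wise in Fourier). Applying $\times_3 F_{n_3}^{-1}$ concludes. The main obstacle, such as it is, is bookkeeping: making sure the Fourier-slice description of $\divideontimes$ (that tube $\star$ multiplication is slice-wise scalar multiplication) and of the tensor transpose is stated cleanly so that all three arguments become one-line trace manipulations; once that dictionary is in place the proof is routine.
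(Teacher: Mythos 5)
Your proposal is correct and follows essentially the same route as the paper: pass to the Fourier domain via $\times_3 F_{n_3}$, reduce each identity to a slicewise matrix-trace computation, and invert the transform. The paper only writes out property 1 in detail and asserts the others are similar, whereas you also supply the (routine but worth recording) Fourier-slice descriptions of $\divideontimes$ and of the tensor transpose needed for properties 2 and 3.
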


\begin{proof}
	%   			\begin{enumerate}
	%   				\item
	%{\bf Proof.}
	For $i=1,\ldots,n_3$, we  have  
	 \begin{align*}
&( \text{T-trace} {(\mathscr{A}^{T}\star (\mathscr{B}+\mathscr{C}))}\times_3 F_{n_3})^{(i)}=\\
&  \text{trace} \left( (\mathscr{A}\times_3 F_{n_3})^{(i)T}  ((\mathscr{B}\times_3 F_{n_3})^{(i)}+(\mathscr{C}\times_3 F_{n_3})^{(i)})\right)= \\
& \text{trace} \left( (\mathscr{A}\times_3 F_{n_3})^{(i)T}  (\mathscr{B}\times_3 F_{n_3})^{(i)}+ (\mathscr{A}\times_3 F_{n_3})^{(i)T}(\mathscr{C}\times_3 F_{n_3})^{(i)}\right)=\\
	&(\text{T-trace} (\mathscr{A}^{T}\star \mathscr{B}+ \mathscr{A}^{T}\star \mathscr{C}) \times_3 F_{n_3})^{(i)}.	
		\end{align*}
	which shows the first property. 	The other properties could be easily shown in a similar way.
\end{proof}		

\subsection{The T-Diamond  product of third order tensors}

In this subsection,  we introduce  the T-Diamond product between two   tensors and give some algebraic properties.

\begin{definition}
	Let
		$\mathscr{A} =[\mathscr{A}_{1},\ldots,\mathscr{A}_{p}]$ where  $ \mathscr{A}_{i}$, $i =1,...,p,$ is an    $n_{1}\times s\times n_{3} $ tensor and let 
		$	\mathscr{B} =[\mathscr{B}_{1},\ldots,\mathscr{B}_{\ell}]$ where $  \mathscr{B}_{j}$, $j =1,...,\ell$ is an    $n_{1}\times s\times n_{3} $ tensor. 
		Then the T-diamond  product $\mathscr{A}^{T} \diamondsuit  \mathcal{B} $ is the  tensor of  size  $p \times  \ell \times n_{3}$ given by : 
		\begin{align*}
		(\mathscr{A}\diamondsuit \mathscr{B})   &=       \widetilde{(\mathscr{A}\diamondsuit \mathscr{B})}  \times_3 F_{n_3}^{-1},      \end{align*}
		\noindent where the  i-th  frontal  slice  of $  \widetilde{(\mathscr{A}\diamondsuit \mathscr{B})} $ is  given  by 
		\begin{align*}
		(\mathscr{A}\diamondsuit \mathscr{B})^{(i)} %=((\mathscr{A}\diamondsuit\mathscr{B})\times_3 F_{n_3}) ^{(i)}  
		&=        ({\mathscr {A}} \times_3 F_{n_3})^{(i)T}\diamond  (\mathscr{B}\times_3 F_{n_3})^{(i)},
		\end{align*}
		where 	   $\diamond $ is the diamond product between two matrices; for more details about the diamond product between two matrices,  see  \cite{Jbilousadaka}. 
\end{definition}
\medskip
\noindent The  T-diamond product  can be  computed  by  the  following   algorithm.

\begin{algorithm}[H]
	\begin{algorithmic}[1]
		\REQUIRE 	$\mathscr{A}\in {\mathbb R}^{n_{1}\times ps \times n_{3}} $ and   $\mathscr{B}\in {\mathbb R}^{n_{1}\times \ell s \times n_{3}}$  .
			\ENSURE: $\mathscr{A}^{T} \diamondsuit  \mathcal{B} $ is the  tensor of  size  $p \times  \ell \times n_{3}$.
		\STATE Set $\mathscr {\widetilde A}=\text{{\tt fft}}(\mathscr{ A},[\,],3) $   and   $ \mathscr {\widetilde B}=\text{{\tt fft}}(\mathscr{B},[\,],3) $,% 
		%\STATE   $\beta=V(i_0,j_0)$, $V_1=V/\beta$, $p(1,1)=i_0$ and $p(1,2)=j_0$;
		\FOR{ $i=1,\dots,n_3$}
		\STATE  $
			(\mathscr{A}^T\diamondsuit\mathscr{B})^{( i)} =     ( {A}^{(i)T}\diamond   {B}^{(i)})$, 
		\ENDFOR
		\STATE $ (\mathscr{A}^T\diamondsuit\mathscr{B}) =\text{{\tt ifft}} (\widetilde{(\mathscr{A}^T\diamondsuit\mathscr{B})},[\,],3)$.
		 
	\end{algorithmic} 
	\caption{Tensor T-Diamond product}
	\label{diamondtensot}
\end{algorithm}
\noindent 	The next proposition gives some algebraic properties of the T-diamond product. 
\begin{proposition} \label{diamantpropos}		Let $\mathscr{A},\mathscr{B},\mathscr{C}\in {\mathbb R}^{n_{1}\times ps \times n_{3}}$, $\mathscr{D}\in {\mathbb R}^{n_{1}\times n_{1} \times n_{3}}$ and $\mathscr{L}\in {\mathbb R}^{p\times p \times n_{3}}$, We have the following proposals:
	\begin{enumerate}
		\item $ (\mathscr{A}+\mathscr{B})^{T}\diamondsuit \mathscr{C} = \mathscr{A}^{T}\diamondsuit \mathscr{C} +\mathscr{B}^{T}\diamondsuit \mathscr{C}$
		\item
		$\mathscr{A}^{T}\diamondsuit(\mathscr{B}+ \mathscr{C}) = \mathscr{A}^{T}\diamondsuit \mathscr{B} +\mathscr{A}^{T}\diamondsuit \mathscr{C}$   			
		\item
		$(\mathscr{A}^{T}\diamondsuit \mathscr{B})^{T} = \mathscr{B}^{T}\diamondsuit \mathscr{A} $   			
		\item
		$ (\mathscr{D}\star \mathscr{A})^{T}\diamondsuit \mathscr{B} = \mathscr{A}^{T}\diamondsuit (\mathscr{D}^{T}\star \mathscr{B})$   			
		\item  
		$ \mathscr{A}^{T}\diamondsuit (\mathscr{B}\star(\mathscr{L}\circledast \mathscr{I}_{ssn_3})) = (\mathscr{A}^{T}\diamondsuit \mathscr{B})\star \mathscr{L}$
	\end{enumerate}	
\end{proposition}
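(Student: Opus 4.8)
\noindent The plan is to carry every one of the five identities into the Fourier domain along the third mode, where — by their very definitions — the T-product, the tensor transpose, the T-Kronecker product $\circledast$ and the T-diamond product $\diamondsuit$ all act on frontal slices independently. Writing $M^{(i)}=(\mathscr{M}\times_3 F_{n_3})^{(i)}$ for the $i$-th frontal slice of $\widetilde{\mathscr{M}}$, the first step is to set up the dictionary $(\widetilde{\mathscr{M}\star\mathscr{N}})^{(i)}=M^{(i)}N^{(i)}$ (the blockwise form of $\star$ noted after its definition), $(\widetilde{\mathscr{M}^{T}})^{(i)}=(M^{(i)})^{T}$ with the slice convention of \eqref{f1}, $(\widetilde{\mathscr{M}\circledast\mathscr{N}})^{(i)}=M^{(i)}\otimes N^{(i)}$ (the definition of $\circledast$), and $(\widetilde{\mathscr{M}^{T}\diamondsuit\mathscr{N}})^{(i)}=(M^{(i)})^{T}\diamond N^{(i)}$ (the definition of $\diamondsuit$), where $\diamond$ is the matrix diamond product of \cite{Jbilousadaka}. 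Since ${\tt fft}(\cdot,[\,],3)$ is a bijection between a tensor and the list of its Fourier frontal slices, two tensors are equal exactly when all of their Fourier slices are equal; hence it suffices to check each identity at a generic index $i\in\{1,\dots,n_3\}$, where it reduces to a statement about ordinary matrices, and then return via ${\tt ifft}$.

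With this reduction in hand, properties~1--4 follow immediately from the corresponding properties of the matrix diamond product. For~1 and~2 I would simply write the $i$-th Fourier slice of each side, namely $(A^{(i)}+B^{(i)})^{T}\diamond C^{(i)}$ and $(A^{(i)})^{T}\diamond(B^{(i)}+C^{(i)})$, and invoke bilinearity of $\diamond$. For~3 the $i$-th slice of the left-hand side is $\bigl((A^{(i)})^{T}\diamond B^{(i)}\bigr)^{T}$, which equals $(B^{(i)})^{T}\diamond A^{(i)}$ by the transpose rule for $\diamond$. For~4 I would expand, using $(\widetilde{\mathscr{D}\star\mathscr{A}})^{(i)}=D^{(i)}A^{(i)}$, the left slice as $(D^{(i)}A^{(i)})^{T}\diamond B^{(i)}$ and the right slice as $(A^{(i)})^{T}\diamond\bigl((D^{(i)})^{T}B^{(i)}\bigr)$; these coincide by the matrix identity $(MA)^{T}\diamond B=A^{T}\diamond(M^{T}B)$ of \cite{Jbilousadaka}, and an inverse FFT closes the case.

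Property~5 is the one I expect to be the main obstacle, because of the block factor $\mathscr{L}\circledast\mathscr{I}_{ssn_3}$; the key point to establish is its behaviour under the DFT. Since the identity tensor $\mathscr{I}_{ssn_3}$ has $I_s$ as its first frontal slice and zero frontal slices afterwards, applying the DFT along the third mode makes \emph{every} frontal slice of $\widetilde{\mathscr{I}_{ssn_3}}$ equal to $I_s$, so that $(\widetilde{\mathscr{L}\circledast\mathscr{I}_{ssn_3}})^{(i)}=L^{(i)}\otimes I_s$ for all $i$. Feeding this into the dictionary, the $i$-th Fourier slice of the left-hand side of~5 is $(A^{(i)})^{T}\diamond\bigl(B^{(i)}(L^{(i)}\otimes I_s)\bigr)$, while that of $(\mathscr{A}^{T}\diamondsuit\mathscr{B})\star\mathscr{L}$ is $\bigl((A^{(i)})^{T}\diamond B^{(i)}\bigr)L^{(i)}$, and these agree by the matrix rule $A^{T}\diamond\bigl(B(L\otimes I_s)\bigr)=(A^{T}\diamond B)L$ of \cite{Jbilousadaka}; an inverse FFT then yields the tensor identity. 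Beyond this observation, what remains is routine bookkeeping: keeping the (conjugate\nobreakdash-)transpose convention of \eqref{f1} consistent throughout properties~3 and~4, and checking that the block sizes line up so that $\diamondsuit$ indeed sends two tensors in $\mathbb{R}^{n_{1}\times ps\times n_{3}}$ to one in $\mathbb{R}^{p\times p\times n_{3}}$.
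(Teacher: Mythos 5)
Your proposal is correct and follows essentially the same route as the paper: pass to the Fourier-domain frontal slices, where $\star$, $\circledast$ and $\diamondsuit$ act blockwise, and invoke the corresponding identities for the matrix diamond product, with property~5 handled exactly as in the paper via $(\widetilde{\mathscr{L}\circledast\mathscr{I}_{ssn_3}})^{(i)}=L^{(i)}\otimes I_s$ and the matrix rule $A^{T}\diamond\bigl(B(L\otimes I_s)\bigr)=(A^{T}\diamond B)L$. You are in fact slightly more complete, since the paper only writes out property~5 and declares the rest analogous.
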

\medskip
\begin{proof}
	%{\bf Proof.}
	Obviously, the  results are derived  directly from the properties of the matrix-$\diamond$ product. 
	%	\begin{enumerate}
	For $i=1,\ldots,n_3$ we have
	\begin{align*}
	& \left(	\mathscr{A}^{T}\diamondsuit (\mathscr{B}\star(\mathscr{L} \circledast \mathscr{I}_{ssn_3})))\times_3 F_{n_3}\right)^{(i)} =   \\
	& ({\mathscr {A}} \times_3 F_{n_3})^{(i)T} \diamond \left(  ({\mathscr {B}} \times_3 F_{n_3})^{(i)}) (  ({\mathscr {L}} \times_3 F_{n_3})^{(i)} 
	 \otimes   ({\mathscr {I}_{ssn_3}} \times_3 F_{n_3})^{(i)}\right)= \\
	&\left[  \left(({\mathscr {A}} \times_3 F_{n_3})^{(i)T} \diamond({\mathscr {B}} \times_3 F_{n_3})^{(i)}\right) ({\mathscr {L}} \times_3 F_{n_3})^{(i)}\right] =\\
	& \left((( \mathscr{A}^{T}\diamondsuit 
	\mathscr{B})\star\mathscr{L})\times_3 F_{n_3}\right)^{(i)}. 	\end{align*}
	\noindent Finally we  get :  $ \mathscr{A}^{T}\diamondsuit (\mathscr{B}\star(\mathscr{L}\circledast \mathscr{I}_{ssn_3})) = (\mathscr{A}^{T}\diamondsuit \mathscr{B})\star \mathscr{L}$.  The other results are obtained by following in  the same manner.  	
\end{proof}

\section{The tensor tubal global GMRES method}
\subsection{The tubal global QR factorization}
Next, we present the tubal-global Gram–-Schmidt process.
\medskip
\begin{definition}\label{inverstubscalar}
	Let ${\rm \bf z}\in {\mathbb R}^{1\times 1 \times n_{3}} $, then 
	the  tubal rank of  ${\rm \bf z}$ is the number of its
	non-zero Fourier coefficients. If the tubal-rank of ${\rm \bf z}$ is equal to  $n_3$, we say that  it is invertible and   we denote by $({\rm \bf z})^{-1}$	 the inverse of  $ {\rm \bf z}$ iff:  ${\rm \bf z}\star({\rm \bf z})^{-1}=({\rm \bf z})^{-1}\star{\rm \bf z}={\rm \bf e}$,  where ${\rm \bf e}$ is the tubal-fiber such that ${\rm unfold}({\rm \bf e})  =(1,0,0\ldots,0)^T$.
\end{definition}

\noindent First, we need to introduce a normalization algorithm. This means that   given a non-zero $\mathscr{A}\in {\mathbb R}^{n_{1}\times s \times n_{3}}$ we need to be able to decompose  the tensor $ \mathscr{A}$ as  
$$ \mathscr{A}={\rm \bf a}\divideontimes\mathscr{Q}= \mathscr{Q}\star({\rm \bf a} \circledast\mathscr{I}_{ssn_3}),$$  
where ${\rm \bf a}$ is invertible and  $\left\langle \mathscr{Q},\mathscr{Q}\right\rangle_T={\rm \bf e}$. We consider the following normalization algorithm. 

\begin{algorithm}[H]
	\begin{algorithmic}[1]
		\REQUIRE 	$\mathscr{A}\in {\mathbb R}^{n_{1}\times s \times n_{3}} $ and  $tol>0$ .
		\ENSURE:  $\mathscr{Q}\in {\mathbb R}^{n_{1}\times s \times n_{3}} $ and  ${\rm \bf a}\in {\mathbb R}^{1\times 1 \times n_{3}}  $ such that $\left\langle \mathscr{Q},\mathscr{Q}\right\rangle_T={\rm \bf e}$.
		\STATE Set $\mathscr{\widetilde{A}}=\text{{\tt fft}}(\mathscr{A},[\,],3) $,% 
		%\STATE   $\beta=V(i_0,j_0)$, $V_1=V/\beta$, $p(1,1)=i_0$ and $p(1,2)=j_0$;
		\FOR{ $j=1,\dots,n_3$}
		\STATE ${\rm \bf a}^{(j)}= 				 	\text{trace}( {A}^{ (j)T}{A}^{(j)})=||{A}^{ (j)}||_F  $ 
		\STATE $~~~~$ \textbf{if}	{${\rm \bf a}^{(j)}<tol$ }   
		  
		  $~~~~~~~~$\textbf{Stop} 
		  
		%\ENDIF.
		$~~~~$ \textbf{else}   {${\mathscr {Q}}^{(j)}=\frac{ { A}^{(j)} }{{\rm \bf a}^{(j)}} $}
		
	   	$~~~~$ \textbf{end if}
		\ENDFOR
		\STATE $ \mathscr{Q}  =\text{{\tt ifft}}  (\mathscr{\widetilde{Q}} ,[\,],3)$, $ {\rm \bf a}  =\text{{\tt ifft}}(\widetilde{{\rm \bf a}} ,[\,],3)$.
			 
	\end{algorithmic} 
	\caption{A normalization algorithm (Normalization($\mathscr{A}$))}
	\label{normalization12}
\end{algorithm}

\noindent The next algorithm   summarizes the different steps defining the tubal-global QR factorisation of a tensor.
 	\begin{algorithm}[H]
 			\begin{algorithmic}[1]
	 	%\begin{enumerate}
			\REQUIRE $\mathscr{Z}=[ \mathscr{Z}_{1},\ldots, \mathscr{Z}_{k}]$ be an  $n\times ks\times n_3$ tensor   where $\mathscr{Z}_{j}$ is an $n\times s\times n_3$ tensor, for $j=1,\ldots,k $, $s<n$ .
		\STATE Set $[\mathscr{Q}_{1},{\rm \bf r}_{1,1}]= \text{ Normalization}(\mathscr{Z}_{1})$: using Algorithm \ref{normalization12}. 
		\FOR{ $j=2,\ldots,k$}
		%\begin{enumerate}
			\STATE $\mathscr{W}=     \mathscr{Q}_j$
			\FOR {$i=1,\ldots,j-1$}
			%\begin{enumerate}
				\STATE ${\rm \bf r}_{i,j}=\langle \mathscr{Q}_i, \mathscr{W} \rangle_T$
				\STATE $\mathscr{W}=\mathscr{W}-{\rm \bf r}_{i,j}\divideontimes\mathscr{Q}_i$	
			%\end{enumerate}	
			\ENDFOR
			\STATE $[\mathscr{Q}_{j},{\rm \bf r}_{j,j}]=  \text{ Normalization}(\mathscr{W} )$:  using Algorithm \ref{normalization12}.
		%\end{enumerate}
		\ENDFOR
%	\end{enumerate}
\end{algorithmic} 
\caption{The Tensor Tubal-Global QR decomposition} 
\label{TGQR}
\end{algorithm}
\medskip
	\begin{proposition}\label{T-QRpropos}
	Let    $ \mathscr{Z}=[ \mathscr{Z}_{1},\ldots, \mathscr{Z}_{k}]$ be an  $n\times ks\times n_3$ tensor   where $\mathscr{Z}_{j}$ is an $n\times s\times n_3$ tensor, for $j=1,\ldots,k$.  Then from Algorithm  \ref{TGQR}, the tensor $\mathscr{Z}$ can be factored as $$\mathscr{Z}=\mathscr{Q}\star (\mathscr{R}\circledast \mathscr{I}_{ssn_3}),$$ 
	where $\mathscr{Q}=[\mathscr{Q}_{1},\ldots,\mathscr{Q}_{k}] $ is an $(n\times ks\times n_3)$ T-orthonormal tensor satisfying	$\mathscr{Q}^{T}\diamondsuit \mathscr{Q}=\mathscr{I}_{kkn_3}$  and  $\mathscr{R}$ is an upper triangular $(k\times k\times n_3)$ tensor  
	(each  frontal  slice  of  $\mathscr{R}$  is  an  upper  triangular  matrix of  size $k\times k$) given by  
	$$ {\mathscr{   {R}}}=\left[ \begin{array}{*{20}{c}}
	{{\rm \bf r}_{1,1  }}&{{\rm \bf r}_{1,2  }}& \ldots & {{\rm \bf r}_{1,k  }}  \\
	& {{\rm \bf r}_{2,2}}&\cdots& {{\rm \bf r}_{2,k}}\\
	%  	 &\ddots&\ddots& {{{\rm \bf r}_{2,k}}\\
	&   & \ddots & \vdots \\
	&    &       &    {\rm \bf r}_{k,k}
	\end{array}  \right]\in \mathbb{R}^{k\times k \times n_3} 
	$$ 
\end{proposition}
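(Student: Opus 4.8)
The plan is to argue by induction on the block index $j$: first to read off from Algorithm \ref{TGQR} the elementary column relations, then to prove the T-orthonormality of the blocks $\mathscr{Q}_{i}$, and finally to repackage the column relations as the announced block factorization. By construction, at the end of the $j$-th outer step of Algorithm \ref{TGQR} the tensor passed to the normalization step is $\mathscr{W}=\mathscr{Z}_{j}-\sum_{i=1}^{j-1}{\rm \bf r}_{i,j}\divideontimes\mathscr{Q}_{i}$, and the call to Algorithm \ref{normalization12} then returns $\mathscr{Q}_{j},{\rm \bf r}_{j,j}$ with $\mathscr{W}={\rm \bf r}_{j,j}\divideontimes\mathscr{Q}_{j}$ and $\langle\mathscr{Q}_{j},\mathscr{Q}_{j}\rangle_{T}={\rm \bf e}$ (the latter because, by construction of Algorithm \ref{normalization12}, every Fourier coefficient of $\langle\mathscr{Q}_{j},\mathscr{Q}_{j}\rangle_{T}$ equals $\|(\widetilde{\mathscr{Q}_{j}})^{(\ell)}\|_{F}^{2}=1$, and the inverse DFT of the constant tube $(1,\dots,1)$ is ${\rm \bf e}$). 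Combining these two relations yields, for every $j=1,\dots,k$,
\begin{equation}\label{colrel}
\mathscr{Z}_{j}=\sum_{i=1}^{j}{\rm \bf r}_{i,j}\divideontimes\mathscr{Q}_{i}.
\end{equation}

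Next I would prove by induction on $j$ that $\{\mathscr{Q}_{1},\dots,\mathscr{Q}_{j}\}$ is T-orthonormal, i.e. $\langle\mathscr{Q}_{i},\mathscr{Q}_{\ell}\rangle_{T}=\delta_{i\ell}\,{\rm \bf e}$, the base case being the computation above. For the inductive step, the mutual T-orthonormality of $\mathscr{Q}_{1},\dots,\mathscr{Q}_{j-1}$ forces the modified Gram--Schmidt coefficients used in the algorithm to coincide with the classical ones, so ${\rm \bf r}_{i,j}=\langle\mathscr{Q}_{i},\mathscr{Z}_{j}\rangle_{T}$ for $i<j$; then, using the linearity of $\langle\cdot,\cdot\rangle_{T}$ and the rule $\langle\mathscr{A},{\rm \bf a}\divideontimes\mathscr{B}\rangle_{T}={\rm \bf a}\star\langle\mathscr{A},\mathscr{B}\rangle_{T}$ of Proposition \ref{proprinnerprodfrob}, for $i<j$ one obtains
\begin{equation*}
\langle\mathscr{Q}_{i},\mathscr{W}\rangle_{T}=\langle\mathscr{Q}_{i},\mathscr{Z}_{j}\rangle_{T}-\sum_{\ell=1}^{j-1}{\rm \bf r}_{\ell,j}\star\langle\mathscr{Q}_{i},\mathscr{Q}_{\ell}\rangle_{T}={\rm \bf r}_{i,j}-{\rm \bf r}_{i,j}\star{\rm \bf e}=0,
\end{equation*}
and since $\mathscr{Q}_{j}=({\rm \bf r}_{j,j})^{-1}\divideontimes\mathscr{W}$ the same rule gives $\langle\mathscr{Q}_{i},\mathscr{Q}_{j}\rangle_{T}=0$ for $i<j$, while $\langle\mathscr{Q}_{j},\mathscr{Q}_{j}\rangle_{T}={\rm \bf e}$ as above. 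Finally, in the Fourier domain the $\ell$-th frontal slice of $\mathscr{Q}^{T}\diamondsuit\mathscr{Q}$ is $(\widetilde{\mathscr{Q}})^{(\ell)T}\diamond(\widetilde{\mathscr{Q}})^{(\ell)}$, whose $(i,i')$ entry is $\text{trace}((\widetilde{\mathscr{Q}_{i}})^{(\ell)T}(\widetilde{\mathscr{Q}_{i'}})^{(\ell)})$, i.e. the $\ell$-th Fourier coefficient of $\langle\mathscr{Q}_{i},\mathscr{Q}_{i'}\rangle_{T}$; hence $\mathscr{Q}^{T}\diamondsuit\mathscr{Q}$ has $(i,i')$ tube $\delta_{ii'}\,{\rm \bf e}$, that is $\mathscr{Q}^{T}\diamondsuit\mathscr{Q}=\mathscr{I}_{kkn_{3}}$.

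It then remains to recognize the relations \eqref{colrel} as the block-column identities of $\mathscr{Z}=\mathscr{Q}\star(\mathscr{R}\circledast\mathscr{I}_{ssn_{3}})$. Unwinding the definition of the T-Kronecker product frontal-slice by frontal-slice shows that $\mathscr{R}\circledast\mathscr{I}_{ssn_{3}}$ is the block tensor whose $(i,j)$ block equals ${\rm \bf r}_{i,j}\circledast\mathscr{I}_{ssn_{3}}$ (in each Fourier slice it is the Kronecker matrix $R^{(\ell)}\otimes I_{s}$ read blockwise), and it is block-upper-triangular since ${\rm \bf r}_{i,j}=0$ for $i>j$. Applying the block-multiplication rules of Proposition \ref{propoblock} (items 1 and 3) to $\mathscr{Q}=[\mathscr{Q}_{1},\dots,\mathscr{Q}_{k}]$ times this tensor, the $j$-th block column of $\mathscr{Q}\star(\mathscr{R}\circledast\mathscr{I}_{ssn_{3}})$ is $\sum_{i=1}^{j}\mathscr{Q}_{i}\star({\rm \bf r}_{i,j}\circledast\mathscr{I}_{ssn_{3}})=\sum_{i=1}^{j}{\rm \bf r}_{i,j}\divideontimes\mathscr{Q}_{i}$, the last equality being the identity ${\rm \bf a}\divideontimes\mathscr{Q}=\mathscr{Q}\star({\rm \bf a}\circledast\mathscr{I}_{ssn_{3}})$ recalled before Algorithm \ref{normalization12}; by \eqref{colrel} this equals $\mathscr{Z}_{j}$, which proves the factorization. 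That $\mathscr{R}$ is upper triangular with the displayed entries (and each frontal slice upper triangular, again since ${\rm \bf r}_{i,j}=0$ for $i>j$) is built into the algorithm.

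The hard part will be the bookkeeping in this last step: one must check that $\mathscr{R}\circledast\mathscr{I}_{ssn_{3}}$ really carries the block structure that makes Proposition \ref{propoblock} applicable, and that right multiplication of the row of blocks $[\mathscr{Q}_{1},\dots,\mathscr{Q}_{k}]$ by a block-upper-triangular tensor distributes over block columns exactly as for matrices; everything else is a routine induction using the algebra of $\langle\cdot,\cdot\rangle_{T}$, $\divideontimes$ and $\circledast$ from Propositions \ref{proprinnerprodfrob} and \ref{kronprop1}. One also tacitly assumes that Algorithm \ref{normalization12} never triggers its early stop, so that every ${\rm \bf r}_{j,j}$ is invertible, which is precisely what makes $\mathscr{Q}_{j}=({\rm \bf r}_{j,j})^{-1}\divideontimes\mathscr{W}$ meaningful.
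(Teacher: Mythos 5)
Your proposal is correct and follows essentially the same route as the paper's proof: an induction establishing T-orthonormality of the $\mathscr{Q}_i$ via the algebra of $\langle\cdot,\cdot\rangle_T$ and $\divideontimes$ from Proposition \ref{proprinnerprodfrob}, followed by rewriting the column relations $\mathscr{Z}_j=\sum_{i\le j}{\rm \bf r}_{i,j}\divideontimes\mathscr{Q}_i$ through the identity ${\rm \bf a}\divideontimes\mathscr{Q}=\mathscr{Q}\star({\rm \bf a}\circledast\mathscr{I}_{ssn_3})$ to assemble the block factorization. Your write-up is in fact somewhat more careful than the paper's (the Fourier-domain justification of the normalization step, the explicit link between the tubal inner products and $\mathscr{Q}^T\diamondsuit\mathscr{Q}$, and the remark that the argument tacitly assumes no early termination so that each ${\rm \bf r}_{j,j}$ is invertible).
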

\medskip
\begin{proof}
% 	{\bf Proof.} 
This will be shown by induction on $k$. For $k=1$, we have from Line 2 of  Algorithm \ref{TGQR}: $ \langle \mathscr{Q}_1, \mathscr{Q}_1\rangle_T ={\rm \bf e}$.  Assume now that the result is true for some  $k$. Using the results of  Proposition \ref{proprinnerprodfrob}, we obtain 
	\begin{align*}
	( {\rm \bf r}_{k+1,k+1}) \star\langle \mathscr{Q}_{j},\mathscr{Q}_{k+1}\rangle_T &=\langle \mathscr{Q}_{j},  (\mathscr{W} - \sum_{i =1}^{k-1} {\rm \bf r}_{i,k } \divideontimes\mathscr{Q}_{i})  \rangle_T \\
	&=    \left( \langle  \mathscr{Q}_{j},\mathscr{W} \rangle_T   -(\sum_{i =1}^{k-1} {\rm \bf r}_{i,k}\star\langle \mathscr{Q}_{j},\mathscr{Q}_{i} \rangle_T \right)   \\
	&=    ( {\rm \bf r}_{j,k}- {\rm \bf r}_{j,k})   ={\rm \bf o},\;  j=1,\ldots,k,  
	\end{align*}
	where ${\rm \bf o}$ denotes de zeros tube fiber of size $(1\times 1\times n_3)$ which all
	his entries are equal to zeros. Then we get   $\mathcal{Q}^T\diamondsuit \mathcal{Q}=
	\mathscr{I}_{ k k n_{3}}$. \\
	Now consider  an  $n\times ks\times n_3$ tensor  $ \mathscr{Z}=[ \mathscr{Z}_{1},\ldots, \mathscr{Z}_{k}]$  where  $\mathscr{Z}_{j}$ is an $n\times s\times n_3$ tensor.  Then from Algorithm \ref{TGQR}, we have   $\mathscr{Z}_{j}=\displaystyle \sum_{i =1}^{j} {\tt r}_{i,j}\divideontimes \mathscr{Q}_{i}$ and the  j-th lateral slice      of $\mathscr{Z}$ is given by :
	\begin{align*}
	(\mathscr{Z})_j=\mathscr{Z}_j &= \displaystyle \sum_{i =1}^{j} {\rm \bf r}_{i,j}\divideontimes \mathscr{Q}_{i}	 \\
	&=\sum_{i =1}^{j}  \mathscr{Q}_{i}\star(( {\rm \bf r}_{i,j} )\circledast \mathscr{I}_{s,s,n_3})\\
	&=[ \mathscr{Q}_{1},\ldots, \mathscr{Q}_{j}]\star\left(  \left[ {\begin{array}{ {c}}
		{  {\rm \bf r}_{1,j}}  \\
		\vdots \\
		{  {\rm \bf r}_{j,j}}
		\end{array}} \right]\circledast \mathscr{I}_{s,s,n_3}\right). 
	\end{align*}
	Let  ${\mathscr{R}_j}= \left[ {\begin{array}{ {c}}
		{ {\rm \bf r}_{1,j}}  \\
		\vdots \\
		{  {\rm \bf r}_{j,j}}\\
		0\\
		\vdots\\
		0
		\end{array}}\right] \in {\mathbb R}^{k\times 1 \times n_{3}}  $ be  the j-th lateral slice of   the    ($k\times k\times n_3$)   tensor $\mathscr{  {R}} =[\mathscr{R}_1 ,\ldots, \mathscr{R}_{k}]$. Then we have the decomposition 
	$$ \mathscr{Z}_j=[ \mathscr{Q}_{1},\ldots, \mathscr{Q}_{j}]\star\left(\mathscr{R}_{j}   \circledast \mathscr{I}_{s,s,n_3}\right), \quad j=1,\ldots,k.
	$$
	Therefore, $\mathscr{Z}$ can  be factored as 
	$ \mathscr{Z}=\mathscr{Q}\star (\mathscr{R}\circledast \mathscr{I}_{s,s,n_3})$ where $[\mathscr{Q}_{1},\ldots,\mathscr{Q}_{k}] $ is an $(n\times ks\times n_3)$ T-orthonormal tensor 
	$\mathscr{Q}^{T}\diamondsuit \mathscr{Q}=\mathscr{I}_{kkn_3}$  and  $\mathscr{R}$ is an upper triangular $(k\times k\times n_3)$ tensor. 
\end{proof} 
\noindent 	Notice  that  $\mathscr{Q}^{T}\diamondsuit \mathscr{Z}=\mathscr{Q}^{T}\diamondsuit(\mathscr{Q}\star (\mathscr{R}\circledast \mathscr{I}_{ssn_3}))$, and by using the result 5) of   Proposition \ref{diamantpropos}, we get $\mathscr{Q}^{T}\diamondsuit \mathscr{Z}=(\mathscr{Q}^{T}\diamondsuit\mathscr{Q})\star\mathscr{R}=\mathscr{R} $.
	\subsection{The tensor  tubal-global Arnoldi process} In this section, we define the tubal-global Arnoldi process that could be considered as a generalisation of the global Arnoldi process defined in \cite{Jbilou} for matrices. In \cite{elguide1}, the authors  introduced the T-global Arnoldi process. The  main  difference between the  tubal-global Arnoldi  and  the T-global Arnoldi  is  that for tubal global Arnoldi process the    tensor Krylov global  subspace $\mathcal{\mathscr{K}}_m(\mathscr{A},\mathscr{V} )$ associated to the T-product is  as follows
\begin{equation}
\label{tr3}
\mathcal{\mathscr{K}}_m(\mathscr{A},\mathscr{V} )= {\rm Tspan}\{ \mathscr{V}, \mathscr{A} \star\mathscr{V},\ldots,\mathscr{A}^{m-1}\star\mathscr{V} \}\\
=\left\lbrace \mathscr{Z} \in \mathbb{R}^{n\times s \times n_3}, \mathscr{Z}= \sum_{i=1}^m \alpha_{i} \left(
\mathscr{A}^{i-1}\star\mathscr{V}\right) \right\rbrace 
\end{equation}
where $\alpha_{i}\in \mathbb{R},\; i=1,\ldots,m $;   
$\mathscr{A}\in \mathbb{R}^{n\times n \times n_3}$ and $\mathscr{V}\in \mathbb{R}^{n\times s \times n_3}$. \\ In  the  case of the tubal global Arnoldi process,  the tensor Tubal global Krylov  subspace  of order $m$  generated by $\mathscr{A}$ and $\mathscr{V}$ and denoted by $\mathscr{TK}^{g}_{m}(\mathscr{A},\mathscr{V})\subset   \mathbb{R}^{n\times s \times n_3}$  is defined by :
\begin{align}\label{ttgk}
\mathscr{TK}^{g}_{m}(\mathscr{A},\mathscr{V})&=\text{T-Span}\left\lbrace \mathscr{V},\mathscr{A}\star\mathscr{V},\mathscr{A}^2\star\mathscr{V},\ldots,\mathscr{A}^{m-1}\star\mathscr{V}\right\rbrace \\
&=\left\lbrace \mathscr{Z} \in \mathbb{R}^{n\times s \times n_3}, \mathscr{Z}= \sum_{i=1}^m {\rm \bf a}_{i}\divideontimes(
\mathscr{A}^{i-1}\star\mathscr{V}) \right\rbrace 
\end{align}
where ${\rm \bf a}_{i}\in \mathbb{R}^{1\times 1 \times n_3}$,  $\mathscr{A}^{i-1}\star\mathscr{V}=\mathscr{A}^{i-2}\star\mathscr{A}\star\mathscr{V}$, $i=2,\ldots,m$ and $\mathscr{A}^{0}$ is the identity tensor. The following tubal-global Arnoldi process  produces a T-orthogonormal basis of  $\mathscr{TK}^{g}_m(\mathscr{A},\mathscr{V})$. The algorithm is described as follows

\begin{algorithm}[H]
		\begin{algorithmic}[1]
		%\begin{enumerate}
		\REQUIRE	  $\mathscr{A}\in \mathbb{R}^{n\times n \times n_3}$, $\mathscr{V}\in \mathbb{R}^{n\times s \times n_3}$ and the positive integer m.
			\STATE Set $[\mathscr{V}_{1}, {\tt r}_{1,1}]=  \text{Normalization}(\mathscr{V})$
			\FOR{ $j=1,\ldots,m$}
			%\begin{enumerate}
				\STATE $\mathscr{W}=  \mathscr{A}\star   \mathscr{V}_j$
				\FOR {$i=1,\ldots,j$}
			%	\begin{enumerate}
					\STATE $ {\rm \bf h}_{i,j}=\langle \mathscr{V}_i, \mathscr{W} \rangle_T$
					\STATE $\mathscr{W}=\mathscr{W}- {\rm \bf h}_{i,j}\divideontimes\mathscr{V}_i$	
				%\end{enumerate}	
				\ENDFOR
				\STATE $[\mathscr{V}_{j+1}, {\rm \bf h}_{j+1,j}]=  \text{Normalization}(\mathscr{W} )$.
			%\end{enumerate}
			\ENDFOR
	\end{algorithmic}
	\caption{The Tensor Tubal-Global Arnoldi} \label{TTGA}
\end{algorithm}

\begin{proposition}
	Suppose that m steps of Algorithm \ref{TTGA} have been run. Then, the tensors  $\mathscr{V}_{1},\ldots,\mathscr{V}_{m}$, form a T-orthonormal basis of the Tubal-global Krylov  subspace $\mathscr{TK}^{g}_{m}(\mathscr{A},\mathscr{V})$.
\end{proposition}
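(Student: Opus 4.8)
The plan is to verify the two defining properties of a T-orthonormal basis in turn: (i) that $\langle\mathscr{V}_i,\mathscr{V}_j\rangle_T={\rm \bf e}$ when $i=j$ and ${\rm \bf o}$ otherwise, and (ii) that $\text{T-Span}\{\mathscr{V}_1,\ldots,\mathscr{V}_m\}=\mathscr{TK}^{g}_{m}(\mathscr{A},\mathscr{V})$ in the sense of \eqref{ttgk}. Throughout one assumes that the process does not break down, i.e. the tube fiber ${\rm \bf h}_{j+1,j}$ returned by the Normalization step (Algorithm \ref{normalization12}) is invertible in the sense of Definition \ref{inverstubscalar} for $j=1,\ldots,m-1$; otherwise the Krylov sequence terminates earlier and the statement is read on the attained dimension. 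Both (i) and (ii) are proved by induction on the step index, mirroring the proof of Proposition \ref{T-QRpropos}.

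For (i): by construction of Algorithm \ref{normalization12}, the output of $\text{Normalization}(\cdot)$ always satisfies $\langle\mathscr{V}_{j+1},\mathscr{V}_{j+1}\rangle_T={\rm \bf e}$, so only the off-diagonal terms need attention. Assuming $\mathscr{V}_1,\ldots,\mathscr{V}_j$ are already T-orthonormal, a short sub-induction along the inner loop of Lines 4--7 (the standard modified Gram--Schmidt argument, using the linearity of $\langle\cdot,\cdot\rangle_T$ and the identity $\langle\mathscr{A},{\rm \bf a}\divideontimes\mathscr{B}\rangle_T={\rm \bf a}\star\langle\mathscr{A},\mathscr{B}\rangle_T$ of Proposition \ref{proprinnerprodfrob} together with commutativity of $\star$ on tube fibers) shows that after the loop $\langle\mathscr{V}_\ell,\mathscr{W}\rangle_T={\rm \bf o}$ for every $\ell\le j$. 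Since Line 8 sets $\mathscr{V}_{j+1}={\rm \bf h}_{j+1,j}^{-1}\divideontimes\mathscr{W}$, one more application of Proposition \ref{proprinnerprodfrob}(2) gives $\langle\mathscr{V}_\ell,\mathscr{V}_{j+1}\rangle_T={\rm \bf h}_{j+1,j}^{-1}\star{\rm \bf o}={\rm \bf o}$, closing the induction.

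For (ii): the governing recurrence read off from Lines 3--8 of Algorithm \ref{TTGA} is ${\rm \bf h}_{j+1,j}\divideontimes\mathscr{V}_{j+1}=\mathscr{A}\star\mathscr{V}_j-\sum_{i=1}^{j}{\rm \bf h}_{i,j}\divideontimes\mathscr{V}_i$. One inclusion is immediate: $\mathscr{V}_1={\rm \bf r}_{1,1}^{-1}\divideontimes\mathscr{V}$, and inductively the recurrence expresses $\mathscr{V}_{j+1}$ as a tube-fiber combination of $\mathscr{A}\star\mathscr{V}_j$ and $\mathscr{V}_1,\ldots,\mathscr{V}_j$, hence of $\mathscr{V},\mathscr{A}\star\mathscr{V},\ldots,\mathscr{A}^{j}\star\mathscr{V}$; so $\text{T-Span}\{\mathscr{V}_1,\ldots,\mathscr{V}_m\}\subseteq\mathscr{TK}^{g}_{m}(\mathscr{A},\mathscr{V})$. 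For the reverse inclusion one tracks the coefficient of $\mathscr{A}^{j-1}\star\mathscr{V}$ in the expansion of $\mathscr{V}_j$ and checks that it is invertible, being a $\star$-product of the inverses of ${\rm \bf r}_{1,1}$ and of the subdiagonal entries ${\rm \bf h}_{2,1},\ldots,{\rm \bf h}_{j,j-1}$, all invertible under the no-breakdown hypothesis; the recurrence can then be solved for $\mathscr{A}^{j}\star\mathscr{V}$ in terms of $\mathscr{V}_1,\ldots,\mathscr{V}_{j+1}$, giving the opposite inclusion. Finally, T-orthonormality forces tubal independence of the $\mathscr{V}_i$ --- if $\sum_i{\rm \bf c}_i\divideontimes\mathscr{V}_i=0$ then ${\rm \bf c}_j=\langle\mathscr{V}_j,\sum_i{\rm \bf c}_i\divideontimes\mathscr{V}_i\rangle_T={\rm \bf o}$ --- so $\{\mathscr{V}_1,\ldots,\mathscr{V}_m\}$ is indeed a basis of $\mathscr{TK}^{g}_{m}(\mathscr{A},\mathscr{V})$.

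The delicate point, and what I expect to be the main obstacle, is that the tube fibers ${\mathbb R}^{1\times 1\times n_3}$ under $\star$ form a commutative ring which is not a field, so ``independence'' and ``basis'' must be understood in a module sense and one may not divide by an arbitrary coefficient ${\rm \bf h}_{i,j}$ --- only by the subdiagonal ones, which are invertible exactly when no breakdown occurs. The cleanest way to make the whole argument watertight (and an alternative to the direct induction) is to pass to the Fourier domain through \eqref{fftifft}: applying ${\tt fft}(\cdot,[\,],3)$ turns $\divideontimes$ into frontal-slicewise scalar multiplication and $\star$ into frontal-slicewise matrix multiplication, so Algorithm \ref{TTGA} becomes, for each $\ell=1,\ldots,n_3$, a process formally identical to the matrix global Arnoldi applied to $A^{(\ell)}$ and $\widetilde{\mathscr{V}}^{(\ell)}$. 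The known global Arnoldi facts (\cite{Jbilou}) then give slicewise orthonormality and the span identity, and pulling back through ${\tt ifft}$ --- realness being guaranteed by the conjugate symmetry \eqref{f1} --- recovers exactly (i) and (ii).
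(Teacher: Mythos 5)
Your proof is correct, and for the orthonormality part it follows essentially the same induction as the paper: expand $\langle\mathscr{V}_j,{\rm \bf h}_{m+1,m}\divideontimes\mathscr{V}_{m+1}\rangle_T$ via the Gram--Schmidt recurrence and Proposition \ref{proprinnerprodfrob} to get ${\rm \bf o}$. But you go further than the paper in two ways that are worth noting. First, the paper only ever concludes ${\rm \bf h}_{m+1,m}\star\langle\mathscr{V}_j,\mathscr{V}_{m+1}\rangle_T={\rm \bf o}$ and silently cancels ${\rm \bf h}_{m+1,m}$; since the tube fibers form a commutative ring with zero divisors rather than a field, this cancellation is only legitimate under the no-breakdown hypothesis (invertibility of the subdiagonal tubes in the sense of Definition \ref{inverstubscalar}), which you state explicitly --- that is a genuine gap in the paper's argument that your version repairs. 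Second, the paper proves only T-orthonormality and never addresses the claim that the $\mathscr{V}_i$ actually span $\mathscr{TK}^{g}_{m}(\mathscr{A},\mathscr{V})$; your two-inclusion argument, tracking the (invertible) leading coefficient of $\mathscr{A}^{j-1}\star\mathscr{V}$ in $\mathscr{V}_j$, supplies the missing half of the statement. Your closing observation --- that the whole proposition reduces, via the FFT along the third mode, to $n_3$ independent instances of the classical global Arnoldi orthogonality relations, with realness recovered from the conjugate symmetry \eqref{f1} --- is a clean alternative route that sidesteps the ring-versus-field subtlety entirely and would be a worthwhile remark to include. One small point of care: your sub-induction correctly treats the loop as modified Gram--Schmidt (the inner products are taken against the successively updated $\mathscr{W}$), whereas the paper's displayed computation reads as classical Gram--Schmidt; under the inductive hypothesis the two give the same coefficients, so this is cosmetic, but your phrasing is the accurate one.
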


\begin{proof}
	%  	{\bf Proof.} 
	This will be shown by induction on $m$.  For $m=1$, we have from Line 2  of  Algorithm \ref{TGQR} the relation  $ \langle \mathscr{V}_1, \mathscr{V}_1\rangle_T ={\rm \bf e}$.  Assume now that the result is true for some $m$, then from Algorithm \ref{TGQR} and by using the results of  Proposition \ref{proprinnerprodfrob},  we get
	\begin{align*}
		({\rm \bf h}_{m+1,m}) \star\langle \mathscr{V}_{j},\mathscr{V}_{m+1}\rangle_T&= \langle \mathscr{V}_{j},({\rm \bf h}_{m+1,m})\divideontimes\mathscr{V}_{m+1}\rangle_T\\
		&=  \langle \mathscr{V}_{j},  (\mathscr{W} - \sum_{i =1}^{m}{\rm \bf h}_{i,m} \divideontimes\mathscr{V}_{i})\rangle_T     \\
		&=   \left(  \langle  \mathscr{V}_{j},\mathscr{W} \rangle_T    -(\sum_{i =1}^{m }{\rm \bf h}_{i,m}\star\langle \mathscr{V}_{j},\mathscr{V}_{i} \rangle_T) \right)    \\
		&=   ({\rm \bf h}_{j,m}-{\rm \bf h}_{j,m})    ={\rm \bf o},\, =1,\ldots,m.
		\end{align*}
		where ${\rm \bf o}$ denote de zeros tube fiber of size $(1\times 1\times n_3)$ which all
		his entries are equal to zeros.
	Furthermore, from Line 3(d) of Algorithm \ref{TTGA}, we immediately have $\langle \mathscr{V}_{m+1},\mathscr{V}_{m+1}\rangle_T={\rm \bf e}$. Therefore, the
	result is true for $m+1$ which completes the proof.%  and finally we get $\mathbb{V}_m^T\diamondsuit \mathbb{V}_m=\mathscr{I}_{ m m n_{3}}$. 
\end{proof}

	\noindent 	Let $\mathbb{V}_{m}  $ be  the   $(n \times sm \times n_{3})$ tensor whose  frontal slices are $ {\mathscr{V}}_{1},\ldots, {\mathscr{V}}_{m}$ and let    $\mathscr{\widetilde{H}}_{m}$ the $(m+1)\times m \times n_{3} $ hessenberg tensor defined by Algorithm \ref{TTGA} (Hessemberg tensor mean that every frontal slice of $\mathscr{  \widetilde{H}}_m$ is a  Hesemberg matrix)  and by $\mathscr{H}_{m}$ the tensor obtained from $\widetilde{\mathscr{ H}}_{m}$ by deleting its last horizontal slice.   $\mathscr{A}\star\mathbb{V}_{m}  $ is  the $(n \times (sm)\times n_{3})$ tensor whose  frontal slices are  $\mathscr{A}\star {\mathscr{V}}_{1},\ldots,\mathscr{A}\star {\mathscr{V}}_{m}$, respectively.  
	Using Definition \ref{bloctens0}, we can  set  
	\begin{align*}
	\mathbb{V}_{m}:=&\left[   {\mathscr{V}}_{1},\ldots, {\mathscr{V}}_{m}\right]\in  \mathbb{R}^{n\times   sm\times n_{3}}, \;  \\
	\mathscr{A}\star {\mathbb{V}}_{m}:=&[\mathscr{A}\star {\mathscr{V}}_{1},\ldots,\mathscr{A}\star {\mathscr{V}}_{m}]\in  \mathbb{R}^{n\times  s m\times n_{3}}\\
	\mathbb{V}_{m+1}:=&\left[  {\mathbb{V}}_m , {\mathscr{V}}_{m+1}\right]\in  \mathbb{R}^{n_1\times   (m+1)s \times n_{3}}. \\
		\end{align*}
The tensors $\mathscr{  \widetilde{H}}_m$ and $	\mathscr{H}_m$ are defined by 
			\begin{align*}
	%\begin{equation*}
	\mathscr{  \widetilde{H}}_m=&\left[ \begin{array}{*{20}{c}}
	{\rm \bf h}_{1,1}&{{{\rm \bf h}_{1,2} }}&\cdot &{\rm \bf h}_{1,m}   \\
	{\rm \bf h}_{2,1}&{\rm \bf h}_{2,2}&\cdots&{\rm \bf h}_{2,m} \\
	&\ddots&\ddots& \vdots\\
	&   &{\rm \bf h}_{m,m-1} &{\rm \bf h}_{m,m}\\
	&    &       &  {\rm \bf h} _{m+1,m}
	\end{array}  \right] \in  \mathbb{R}^{(m+1)\times   m\times n_{3}}, \\
	%\end{equation*} 
	%\begin{equation*}
	\mathscr{H}_m=&\left[ \begin{array}{*{20}{c}}
	{\rm \bf h}_{1,1}&{{{\rm \bf h}_{1,2} }}&\cdot &{\rm \bf h}_{1,m}   \\
	{\rm \bf h}_{2,1}&{\rm \bf h}_{2,2}&\cdots&{\rm \bf h}_{2,m} \\
	&\ddots&\ddots& \vdots\\
	&   &{\rm \bf h}_{m,m-1} &  {\rm \bf h}_{m,m} 
	\end{array}  \right] \in  \mathbb{R}^{m\times   m\times n_{3}}.
	%\end{equation*}  
	\end{align*} 
	\noindent The tesnor  $ {\mathscr H}_m$ can be obtained from 
	$ \mathscr{  \widetilde{H}}_{m}$ by deleting the  horizontal slice
	$$[{\rm \bf o}  ,\ldots,{\rm \bf o},{\rm \bf h} _{m+1,m}] ={\rm \bf h} _{m+1,m}\star \mathscr{E}_{m} \;\in  \mathbb{R}^{1 \times   m\times   n_{3}},$$ where ${\rm \bf o}$ denote de zeros tube fiber of size $(1\times 1\times n_3)$ which all
	his entries are equal to zeros, and  $\mathscr{E}_{m}=\left[{\rm \bf o}  ,\ldots,{\rm \bf o},{\rm \bf e}\right]\in \mathbb{R}^{1 \times   m \times   n_{3}} $ where ${\rm \bf e} $ the tube fiber such that ${\rm unfold}({\rm \bf e})  =(1,0,0\ldots,0)^T$.
	\noindent  The tensor $\mathscr{  \widetilde{H}}_{m}$ can be written as follows
	$$ 
	\mathscr{  \widetilde{H}}_{m}=\begin{bmatrix}
	\mathscr{H}_m \\
	{\rm \bf h}_{m+1,m}\star\mathscr{E}_{m} 
	\end{bmatrix}.$$

\noindent We can now state the following algebraic properties

\begin{proposition}\label{T-GlobalArnolproposit}
		Suppose that m steps of Algorithm \ref{TTGA} have been run. Then, the following statements hold:
		\begin{align*}
		\mathscr{A}\star\mathbb{V}_{m}=&\mathbb{V}_{m}\star (\mathscr{H}_{m} \circledast \mathscr{I}_{ssn_3}) + \mathscr{V}_{m+1}\star((
		{\tt h}_{m+1,m}\star \mathscr{E}_{m})\circledast   \mathscr{I}_{ssn_3}),\\
		\mathbb{V}_{m}^{T}\diamondsuit\mathcal{A}\star\mathbb{V}_{m}=&\mathscr{H}_{m}, \\
		\mathcal{A}\star\mathbb{V}_{m}=&\mathbb{V}_{m+1} \star(\mathscr{ \widetilde{H}}_m\circledast \mathscr{I}_{ssn_3}), \\
		\mathbb{V}_{m+1}^{T}\diamondsuit  \mathcal{A}\star\mathbb{V}_{m}=&\mathscr{ \widetilde{H}}_m,\\
		\mathbb{V}_{m}^{T} \diamondsuit\mathbb{V}_m=&\mathscr{I}_{ m m n_{3}}.
		\end{align*}
\end{proposition}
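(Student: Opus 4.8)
\emph{Proof strategy.} The plan is to derive the five relations in the order: fifth, first, third, second, fourth, assembling each from the algebraic rules of Propositions~\ref{propoblock}, \ref{kronprop1} and \ref{diamantpropos} and from the T-orthonormality of $\mathscr{V}_1,\ldots,\mathscr{V}_{m+1}$ proved in the preceding proposition. The fifth relation, $\mathbb{V}_m^{T}\diamondsuit\mathbb{V}_m=\mathscr{I}_{mmn_3}$, comes essentially for free: by the definition of the T-diamond product the $(i,j)$ tube-fiber of $\mathbb{V}_m^{T}\diamondsuit\mathbb{V}_m$ has $k$-th Fourier coefficient ${\rm trace}\big((\mathscr{V}_i\times_3 F_{n_3})^{(k)T}(\mathscr{V}_j\times_3 F_{n_3})^{(k)}\big)$, which by the formula displayed just after the definition of $\langle\cdot,\cdot\rangle_T$ is exactly the $k$-th Fourier coefficient of $\langle\mathscr{V}_i,\mathscr{V}_j\rangle_T$; since the basis tensors are T-orthonormal this tube is ${\rm \bf e}$ for $i=j$ and ${\rm \bf o}$ otherwise, so $\mathbb{V}_m^{T}\diamondsuit\mathbb{V}_m=\mathscr{I}_{mmn_3}$. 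The same computation yields $\mathbb{V}_{m+1}^{T}\diamondsuit\mathbb{V}_{m+1}=\mathscr{I}_{(m+1)(m+1)n_3}$ and shows that $\mathbb{V}_m^{T}\diamondsuit\mathscr{V}_{m+1}$ is the zero tensor; both facts are reused below.

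For the first (Arnoldi) relation I would read the recurrence off Algorithm~\ref{TTGA}: after its inner loop at step $j$ the working tensor is $\mathscr{W}=\mathscr{A}\star\mathscr{V}_j-\sum_{i=1}^{j}{\rm \bf h}_{i,j}\divideontimes\mathscr{V}_i$, and then $\text{Normalization}(\mathscr{W})$ returns $\mathscr{W}={\rm \bf h}_{j+1,j}\divideontimes\mathscr{V}_{j+1}$, so that, using ${\rm \bf a}\divideontimes\mathscr{Q}=\mathscr{Q}\star({\rm \bf a}\circledast\mathscr{I}_{ssn_3})$,
$$
\mathscr{A}\star\mathscr{V}_j=\sum_{i=1}^{j+1}{\rm \bf h}_{i,j}\divideontimes\mathscr{V}_i=\sum_{i=1}^{j+1}\mathscr{V}_i\star\big({\rm \bf h}_{i,j}\circledast\mathscr{I}_{ssn_3}\big),\qquad j=1,\ldots,m.
$$
I would then match the two sides of the first relation one block of $s$ columns at a time. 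Using Proposition~\ref{propoblock} together with the block structure of $\mathscr{H}_m\circledast\mathscr{I}_{ssn_3}$ (its $(i,j)$ block is ${\rm \bf h}_{i,j}\circledast\mathscr{I}_{ssn_3}$ and it vanishes for $i>j+1$ since $\mathscr{H}_m$ is Hessenberg), the $j$-th block of $\mathbb{V}_m\star(\mathscr{H}_m\circledast\mathscr{I}_{ssn_3})$ equals $\sum_{i=1}^{\min(j+1,m)}\mathscr{V}_i\star({\rm \bf h}_{i,j}\circledast\mathscr{I}_{ssn_3})$, whereas ${\rm \bf h}_{m+1,m}\star\mathscr{E}_m=[{\rm \bf o},\ldots,{\rm \bf o},{\rm \bf h}_{m+1,m}]$ makes the $j$-th block of $\mathscr{V}_{m+1}\star(({\rm \bf h}_{m+1,m}\star\mathscr{E}_m)\circledast\mathscr{I}_{ssn_3})$ equal to ${\rm \bf o}$ for $j<m$ and to $\mathscr{V}_{m+1}\star({\rm \bf h}_{m+1,m}\circledast\mathscr{I}_{ssn_3})$ for $j=m$. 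Adding the two contributions recovers $\sum_{i=1}^{j+1}\mathscr{V}_i\star({\rm \bf h}_{i,j}\circledast\mathscr{I}_{ssn_3})=\mathscr{A}\star\mathscr{V}_j$ for every $j$, which is the first relation.

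The third relation is the first one repackaged: with $\mathbb{V}_{m+1}=[\mathbb{V}_m,\mathscr{V}_{m+1}]$ and the two-block decomposition of $\mathscr{\widetilde{H}}_m$ displayed above, and since $\circledast\mathscr{I}_{ssn_3}$ commutes with stacking frontal slices, Proposition~\ref{propoblock} turns $\mathbb{V}_{m+1}\star(\mathscr{\widetilde{H}}_m\circledast\mathscr{I}_{ssn_3})$ into precisely the right-hand side of the first relation.

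Finally, the second and fourth relations follow by applying the T-diamond map. Hitting the first relation with $\mathbb{V}_m^{T}\diamondsuit(\cdot)$ and using linearity of $\diamondsuit$ (Proposition~\ref{diamantpropos}, items~1 and~2) together with the pull-through rule $\mathscr{A}^{T}\diamondsuit(\mathscr{B}\star(\mathscr{L}\circledast\mathscr{I}_{ssn_3}))=(\mathscr{A}^{T}\diamondsuit\mathscr{B})\star\mathscr{L}$ (Proposition~\ref{diamantpropos}, item~5, in its evident rectangular form for the second summand), the right-hand side becomes $(\mathbb{V}_m^{T}\diamondsuit\mathbb{V}_m)\star\mathscr{H}_m+(\mathbb{V}_m^{T}\diamondsuit\mathscr{V}_{m+1})\star({\rm \bf h}_{m+1,m}\star\mathscr{E}_m)$, which by the fifth relation and the identity-tensor property equals $\mathscr{I}_{mmn_3}\star\mathscr{H}_m=\mathscr{H}_m$; likewise, hitting the third relation with $\mathbb{V}_{m+1}^{T}\diamondsuit(\cdot)$ gives $(\mathbb{V}_{m+1}^{T}\diamondsuit\mathbb{V}_{m+1})\star\mathscr{\widetilde{H}}_m=\mathscr{\widetilde{H}}_m$. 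I expect the only genuinely delicate point to be the block-column bookkeeping for the first relation --- checking cleanly that, under the T-product, $\mathscr{H}_m\circledast\mathscr{I}_{ssn_3}$ acts on the block-row tensor $\mathbb{V}_m$ exactly as the block operator with $(i,j)$-block ${\rm \bf h}_{i,j}\circledast\mathscr{I}_{ssn_3}$, and that the T-product distributes over these blocks as in Proposition~\ref{propoblock}; once that is pinned down, the remaining relations are direct consequences of the first and fifth ones and of Propositions~\ref{propoblock}--\ref{diamantpropos}.
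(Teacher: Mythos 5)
Your proposal is correct and follows essentially the same route as the paper: the paper's proof also reads off the recurrence $\mathscr{A}\star\mathscr{V}_j=\sum_{i=1}^{j+1}{\rm \bf h}_{i,j}\divideontimes\mathscr{V}_i$ from Algorithm \ref{TTGA}, converts $\divideontimes$ into $\star(\cdot\circledast\mathscr{I}_{ssn_3})$, and identifies block (lateral-slice) columns to obtain the Arnoldi-type relations. The only difference is that the paper proves just the third relation and declares the rest analogous, whereas you explicitly derive relations two, four and five from T-orthonormality and Proposition \ref{diamantpropos} (item 5 in its rectangular form), which is exactly the intended ``same way.''
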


\begin{proof}
	We give a proof only for the third relation, the other relations could be obtained in the same way. From Algorithm \ref{TTGA}, we have $ \mathscr{A}\star\mathscr{V}_{j}=\displaystyle \sum_{i =1}^{j+1}{\rm \bf h}_{i,j}\divideontimes \mathscr{V}_{i}$ and by 
		using the fact that  $\mathscr{A}\star\mathbb{V}_{m}=[\mathscr{A}\star\mathscr{V}_{1},\ldots,\mathscr{A}\star\mathscr{V}_{m}]$,
		the j-th frontal slice      of $\mathscr{A}\star\mathbb{V}_{m}$ is given by  
		\begin{align*}
		(\mathscr{A}\star\mathbb{V}_{m})_j=\mathscr{A}\star\mathscr{V}_{j}&=\sum_{i =1}^{j+1}{\rm \bf h}_{i,j}\divideontimes \mathscr{V}_{i}	 \\
		&=\sum_{i =1}^{j+1}  \mathscr{V}_{i}\star(({\rm \bf h}_{i,j} )\circledast \mathscr{I}_{s,s,n_3}),
			\end{align*}
			also expressed as 
	$$	(\mathscr{A}\star\mathbb{V}_{m})_j=[ \mathscr{V}_{1},\ldots, \mathscr{V}_{j+1}]\star\left(  \left[ {\begin{array}{ {c}}
			{ {\rm \bf h}_{1,j}}  \\
			\vdots \\
			{ {{\rm \bf h}}_{j+1,j}}
			\end{array}} \right]\circledast \mathscr{I}_{s,s,n_3}\right). $$
%		\end{align*}
		Let  ${\mathscr{H}_j}= \left[ {\begin{array}{ {c}}
			{ {\rm \bf h}_{1,j}}  \\
			\vdots \\
			{ {\rm \bf h}_{j+1,j}}\\
			0\\
			\vdots\\
			0
			\end{array}}\right] \in {\mathbb R}^{m+1\times 1 \times n_{3}}  $ be  the j-th lateral slice of  of the Hessemberg tensor  $\mathscr{ \widetilde{H}} =[\mathscr{H}_1 ,\ldots, \mathscr{H}_{m}]$.
		The we have  $$(\mathscr{A}\star\mathbb{V}_{m})_j=[ \mathscr{V}_{1},\ldots, \mathscr{V}_{j+1}]\star\left(\mathscr{H}_{j}   \circledast \mathscr{I}_{s,s,n_3}\right) \quad j=1,\ldots,m. 
		$$
		and the result follows. 
\end{proof}

\subsection{The tensor  tubal-global GMRES  method}

The tensor tubal-global GMRES method is  based on tubal-global Arnoldi process to build a T-orthonormal basis of the tensor   tubal global  Krylov subspace \eqref{ttgk}.  First, we need to introduce a new T-$\ell_{2}$ norm  defined in $ \mathbb{R}^{m\times 1\times n_{3}}$.
\begin{definition}
	Let $ \mathscr{X}\in \mathbb{R}^{m\times 1\times n_{3}}$, $ \mathscr{Y}\in \mathbb{R}^{m\times 1\times n_{3}}$, then  the  T-$\ell_{2}$ inner  product  of $ \mathscr{X}$ and $ \mathscr{Y}$   is defined by 
	\begin{equation}
	\left\langle \mathscr{  {X}}, \mathscr{Y}\right\rangle _{T_{\ell_{2}}}=  \frac{1}{ {n_3}} \left( \sum_{i=1}^{n_3}   (\mathscr{  {X}}\times_3 F_{n_3} )^{(i)T}   (\mathscr{  {Y}} \times_3 F_{n_3} )^{(i)} \right ).   	\end{equation}
	The associated T-$\ell_{2}$ norm is defined by 
	%   			Let $ \mathscr{  {Y}}\in \mathbb{R}^{m\times 1\times n_{3}}$, the  T$-\ell_{2}$-norm  noted by $||.||_{T_{\ell_{2}}}$is defined as follow :
	\begin{equation}
	||\mathscr{  {Y}}||_{T_{\ell_{2}}}=\frac{1}{\sqrt{n_3}}\left( \sum_{i=1}^{n_3}|| (\mathscr{  {Y}} \times_3 F_{n_3} )^{(i)}  ||_2^2 \right)^{\frac{1}{2}}, 
	\end{equation}
	where   $||.||_{2}$ denotes the usual vector $\ell_{2}$-norm. In the following proposition, we give some algebraic relations associated to  T-$\ell_{2}$ inner  product.
\end{definition}

\begin{proposition}\label{tl2proprinnerprodfrob}
	Let $\mathscr{A},\mathscr{B},\mathscr{C}\in  \mathbb{R}^{m\times 1\times n_{3}}$ and $  \alpha\in  \mathbb{R} $, then the T-$\ell_{2}$-inner product satisfies the following direct algebraic  properties. 
	\begin{enumerate}
		\item  $\langle \mathscr{A}, \mathscr{B}+\mathscr{C} \rangle_{T_{\ell_{2}}}$=$\langle \mathscr{A}, \mathscr{B}\rangle_{T_{\ell_{2}}}+\langle \mathscr{A},\mathscr{C} \rangle_{T_{\ell_{2}}}$.
		\item $\langle \mathscr{A}, \alpha \mathscr{B}  \rangle_{T_{\ell_{2}}}$=$\alpha  \langle \mathscr{A}, \mathscr{B}\rangle_{T_{\ell_{2}}} $.
		\item 
		$\langle \mathscr{A}, \mathscr{X}\star \mathscr{B}  \rangle_{T_{\ell_{2}}}=\langle \mathscr{X}^T\star\mathscr{A},   \mathscr{B}  \rangle_{T_{\ell_{2}}},$  for $\mathscr{X} \in  \mathbb{R}^{n_{1}\times n_1\times n_{3}}$.  
	\end{enumerate}
\end{proposition}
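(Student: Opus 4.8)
The plan is to verify the three identities directly from the definition, working slice by slice in the Fourier domain, in the same style as the proofs of Propositions~\ref{proprinnerprodfrob} and~\ref{diamantpropos}. Write $\widehat{\mathscr{Z}}^{(i)}:=(\mathscr{Z}\times_3 F_{n_3})^{(i)}$ for the $i$-th frontal slice of $\widetilde{\mathscr{Z}}$, so that, reading the transpose in the definition of $\langle\cdot,\cdot\rangle_{T_{\ell_2}}$ as a conjugate transpose (see the last paragraph),
\[
\langle \mathscr{X},\mathscr{Y}\rangle_{T_{\ell_2}}=\frac{1}{n_3}\sum_{i=1}^{n_3}\bigl(\widehat{\mathscr{X}}^{(i)}\bigr)^{\ast}\,\widehat{\mathscr{Y}}^{(i)},
\]
a finite sum of scalar ($1\times 1$) terms indexed by $i$; hence it suffices to establish each claim termwise in $i$ and then sum.

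For the first property, linearity of the $3$-mode product gives $\widehat{(\mathscr{B}+\mathscr{C})}^{(i)}=\widehat{\mathscr{B}}^{(i)}+\widehat{\mathscr{C}}^{(i)}$, and distributing the left multiplication by $(\widehat{\mathscr{A}}^{(i)})^{\ast}$ splits the $i$-th term as $(\widehat{\mathscr{A}}^{(i)})^{\ast}\widehat{\mathscr{B}}^{(i)}+(\widehat{\mathscr{A}}^{(i)})^{\ast}\widehat{\mathscr{C}}^{(i)}$; summing over $i$ yields $\langle \mathscr{A},\mathscr{B}\rangle_{T_{\ell_2}}+\langle \mathscr{A},\mathscr{C}\rangle_{T_{\ell_2}}$. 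The second property is the identical computation with $\widehat{(\alpha\mathscr{B})}^{(i)}=\alpha\,\widehat{\mathscr{B}}^{(i)}$, the scalar factoring out of every term — here it is essential that $\alpha\in\mathbb{R}$ is an ordinary scalar and not a tubal fibre, so that it commutes past the matrix products and the Fourier transform.

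The third property is the only step with any content. I would invoke two facts already used in the paper: the Fourier-domain factorisation of the T-product, $\widehat{(\mathscr{X}\star\mathscr{B})}^{(i)}=\widehat{\mathscr{X}}^{(i)}\widehat{\mathscr{B}}^{(i)}$, which is the block-diagonalisation \eqref{dft9}--\eqref{dft8}; and the fact that the T-transpose acts slicewise in the Fourier domain as a conjugate transpose, $\widehat{(\mathscr{X}^{T})}^{(i)}=\bigl(\widehat{\mathscr{X}}^{(i)}\bigr)^{\ast}$, which is precisely the relation underlying the displayed identity that precedes Proposition~\ref{proprinnerprodfrob} (and follows from ${\rm bcirc}(\mathscr{X}^T)={\rm bcirc}(\mathscr{X})^T$ together with \eqref{dft8}). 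Then the $i$-th term of $\langle \mathscr{A},\mathscr{X}\star\mathscr{B}\rangle_{T_{\ell_2}}$ equals $(\widehat{\mathscr{A}}^{(i)})^{\ast}\widehat{\mathscr{X}}^{(i)}\widehat{\mathscr{B}}^{(i)}$, which I rewrite as $\bigl((\widehat{\mathscr{X}}^{(i)})^{\ast}\widehat{\mathscr{A}}^{(i)}\bigr)^{\ast}\widehat{\mathscr{B}}^{(i)}=\bigl(\widehat{(\mathscr{X}^{T}\star\mathscr{A})}^{(i)}\bigr)^{\ast}\widehat{\mathscr{B}}^{(i)}$, i.e., the $i$-th term of $\langle \mathscr{X}^{T}\star\mathscr{A},\mathscr{B}\rangle_{T_{\ell_2}}$; summing over $i$ and dividing by $n_3$ finishes it. (Implicitly $\mathscr{X}\in\mathbb{R}^{n_1\times n_1\times n_3}$ has $n_1=m$, so that $\mathscr{X}\star\mathscr{B}$ and $\mathscr{X}^{T}\star\mathscr{A}$ are defined and land in $\mathbb{R}^{m\times 1\times n_3}$.)

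The one point requiring care — rather than a genuine obstacle — is the conjugation. The transpose symbol in the definition of $\langle\cdot,\cdot\rangle_{T_{\ell_2}}$ must be read as a conjugate transpose; this is what is consistent with the associated norm $\|\cdot\|_{T_{\ell_2}}$ (which uses the ordinary $\|\cdot\|_2$), and it is exactly the conjugate transpose produced on Fourier slices by a T-transpose, so that the rewriting in the third property matches up. Equivalently, one can avoid the Fourier slices altogether: by the remark following the definition of the Frobenius norm, on $\mathbb{R}^{m\times 1\times n_3}$ the quantity $\langle\mathscr{X},\mathscr{Y}\rangle_{T_{\ell_2}}$ equals the scalar inner product $\langle\mathscr{X},\mathscr{Y}\rangle$ (in particular it is real, since the conjugate symmetry \eqref{f1} makes the paired Fourier slices contribute conjugate quantities); properties~1--3 then become elementary properties of that inner product together with the adjoint relation $\langle\mathscr{X}^{T}\star\mathscr{A},\mathscr{B}\rangle=\langle\mathscr{A},\mathscr{X}\star\mathscr{B}\rangle$.
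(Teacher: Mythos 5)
Your proof is correct. The paper in fact offers no proof of Proposition~\ref{tl2proprinnerprodfrob} (it presents the three identities as ``direct algebraic properties''), and your slicewise verification in the Fourier domain is exactly the style of argument the paper uses for the analogous statements (Propositions~\ref{proprinnerprodfrob} and~\ref{diamantpropos}): linearity of the $3$-mode product for items 1--2, and the facts that $\widetilde{(\mathscr{X}\star\mathscr{B})}^{(i)}=\widetilde{\mathscr{X}}^{(i)}\widetilde{\mathscr{B}}^{(i)}$ and that the T-transpose becomes a slicewise conjugate transpose for item 3. The one point where you go beyond the paper is worth keeping: you note that the transpose in the definition of $\langle\cdot,\cdot\rangle_{T_{\ell_2}}$ must be read as a conjugate transpose (otherwise $\langle\mathscr{Y},\mathscr{Y}\rangle_{T_{\ell_2}}$ would not match $\|\mathscr{Y}\|_{T_{\ell_2}}^2$, since for the unnormalized DFT one has $F_{n_3}^TF_{n_3}=n_3P$ with $P$ a flip permutation rather than $n_3I$), and with that reading your alternative remark that $\langle\cdot,\cdot\rangle_{T_{\ell_2}}$ coincides with the real scalar inner product on $\mathbb{R}^{m\times 1\times n_3}$ is also right; the paper leaves this implicit. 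Your handling of the dimension mismatch ($n_1=m$ so that $\mathscr{X}\star\mathscr{B}$ is defined) is likewise appropriate.
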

\noindent The next property is important for the simplification of some minimisation properties to be used later.

\begin{proposition}\label{propoortho}
	Let $ \mathscr{  {Y}}\in \mathbb{R}^{m\times 1\times n_{3}}$ and $\mathscr{V} \in \mathbb{R}^{n\times ms\times n_{3}}$  such that  $\mathscr{V} ^{T} \diamondsuit \mathscr{V} =\mathscr{I}_{ m m n_{3}}$. Then
	\begin{align*}
	||\mathscr{  {V}}\star(\mathscr{  {Y}}\circledast \mathscr{I}_{ssn_3})||_F=||\mathscr{  {Y}}||_{T_{\ell_{2}}}.
	\end{align*}
\end{proposition}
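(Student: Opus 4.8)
The natural route is to reduce everything to the Fourier domain, where both $\|\cdot\|_F$ and $\|\cdot\|_{T_{\ell_2}}$ split into a sum over the $n_3$ frontal slices and where the hypothesis $\mathscr{V}^{T}\diamondsuit\mathscr{V}=\mathscr{I}_{mmn_3}$ becomes ordinary orthonormality. First I would fix notation: for a tensor $\mathscr{Z}$ write $Z^{(i)}:=(\mathscr{Z}\times_3 F_{n_3})^{(i)}$ for its $i$-th Fourier slice, and partition $V^{(i)}=[\,V_1^{(i)},\dots,V_m^{(i)}\,]$ with each $V_j^{(i)}\in\mathbb{C}^{\,n\times s}$. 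The first step is to read off the hypothesis slicewise: by the definition of the T-diamond product and since every Fourier slice of $\mathscr{I}_{mmn_3}$ equals $I_m$, the identity $\mathscr{V}^{T}\diamondsuit\mathscr{V}=\mathscr{I}_{mmn_3}$ is equivalent to $(V^{(i)})^{T}\diamond V^{(i)}=I_m$ for each $i$, i.e. to $\langle V_j^{(i)},V_k^{(i)}\rangle_F=\delta_{jk}$ for all $i,j,k$: in each Fourier slice the blocks $V_1^{(i)},\dots,V_m^{(i)}$ are orthonormal for the Frobenius inner product on $\mathbb{C}^{\,n\times s}$.

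Next I would compute the Fourier slices of $\mathscr{P}:=\mathscr{V}\star(\mathscr{Y}\circledast\mathscr{I}_{ssn_3})$. Since the $\star$-product is block-diagonalized by the DFT (relation \eqref{dft8}) and the Fourier slices of $\mathscr{I}_{ssn_3}$ are all equal to $I_s$, the $i$-th Fourier slice of $\mathscr{Y}\circledast\mathscr{I}_{ssn_3}$ is $Y^{(i)}\otimes I_s$, so
$$P^{(i)}=V^{(i)}\bigl(Y^{(i)}\otimes I_s\bigr)=\sum_{j=1}^{m}y_{j}^{(i)}\,V_j^{(i)},$$
where $y_{j}^{(i)}$ is the $j$-th entry of the column $Y^{(i)}$. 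By the orthonormality from the first step and the Pythagorean identity, $\|P^{(i)}\|_F^{2}=\sum_{j=1}^{m}|y_{j}^{(i)}|^{2}=\|Y^{(i)}\|_2^{2}$ for each $i$.

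Finally I would assemble the two sides. On the left, the Parseval-type identity $\|\mathscr{Z}\|_F=\frac{1}{\sqrt{n_3}}\|\mathbf{Z}\|_F$ from the Remark after \eqref{dft9} gives $\|\mathscr{P}\|_F^{2}=\frac{1}{n_3}\sum_{i=1}^{n_3}\|P^{(i)}\|_F^{2}$; on the right, the definition of the T-$\ell_2$ norm gives $\|\mathscr{Y}\|_{T_{\ell_2}}^{2}=\frac{1}{n_3}\sum_{i=1}^{n_3}\|Y^{(i)}\|_2^{2}$. Combining these with the slicewise identity $\|P^{(i)}\|_F^{2}=\|Y^{(i)}\|_2^{2}$ yields $\|\mathscr{P}\|_F^{2}=\|\mathscr{Y}\|_{T_{\ell_2}}^{2}$, which is the claim. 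The one point requiring care is the first step: one must use that the ``transpose'' appearing in the Fourier-domain definitions of $\diamondsuit$ and of the T-trace acts as a conjugate transpose once the slices are complex (this is exactly what the symmetry relation \eqref{f1} encodes), so that $\mathscr{V}^{T}\diamondsuit\mathscr{V}=\mathscr{I}_{mmn_3}$ is a genuine Hermitian orthonormality condition on the $V_j^{(i)}$ rather than a much more restrictive one; the remaining steps are routine matrix algebra, provided one also checks that the two normalization factors $1/n_3$ match up, which they do.
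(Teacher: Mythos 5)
Your proof is correct and follows essentially the same route as the paper's: reduce to the Fourier domain via the Parseval-type identity, compute the slices of $\mathscr{V}\star(\mathscr{Y}\circledast \mathscr{I}_{ssn_3})$ as $V^{(i)}\bigl(Y^{(i)}\otimes I_s\bigr)$, and match the $1/n_3$ factors. The only difference is that you make explicit the step the paper leaves implicit, namely that $\mathscr{V}^{T}\diamondsuit\mathscr{V}=\mathscr{I}_{mmn_3}$ gives slicewise (Hermitian) orthonormality of the blocks $V_j^{(i)}$ so that $\|V^{(i)}(Y^{(i)}\otimes I_s)\|_F^2=\|Y^{(i)}\|_2^2$ follows by a Pythagorean argument, which is a welcome clarification rather than a departure.
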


\begin{proof}
	%  	{\bf Proof.} 
	We have  \begin{align*}
	||\mathscr{  {V}}\star(\mathscr{  {Y}}\circledast \mathscr{I}_{ssn_3})||_F^2&=\frac{1}{ {n_3}}(\sum_{i=1}^{n_3}||(\mathscr{  {V}}\times_3 F_{n_3} )^{(i)}  \left(  (\mathscr{  {Y}} \times_3 F_{n_3} )^{(i)} \otimes   (\mathscr  {I}_{ssn_3} \times_3 F_{n_3} )^{(i)}\right)  ||^2_F) \\
	&=\frac{1}{ {n_3}}(\sum_{i=1}^{n_3}(|| (\mathscr{  {Y}} \times_3 F_{n_3} )^{(i)} ||_2^2)) \\
	&=||\mathscr{  {Y}}||_{T_{\ell_{2}}}^2.
	\end{align*}
\end{proof}
\noindent Next, we will see how to define the tubal-global GMRES.  Consider the linear system   of tensor equations
\begin{equation}\label{syslintens0}
\mathscr{A}\star \mathscr{X}=\mathscr{B}
\end{equation}  
where $\mathscr{A}\in \mathbb{R}^{n\times n \times n_3}$ assumed to be nonsingular, $\mathscr{B}$, $\mathscr{X} \in \mathbb{R}^{n\times s \times n_3}$ with  $ s \ll n$. If $n_3=1$ then the problem  \eqref{syslintens0}  reduces to a  multiple linear systems of $s$ equations . 
Let $\mathscr{  {X}}_{0}\in \mathbb{R}^{n\times s\times n_{3}}$ be an arbitrary initial guess   with  the associated  residual tensor 
$\mathscr{R}_{0}=\mathscr{B}-\mathscr{A}\star \mathscr{X}_0$.    The aim of the tensor  tubal-global GMRES method is to find, at some step $m$,   an approximation $\mathscr{X}_{m}$ of the solution $\mathscr{X}^*$ of  the problem \eqref{syslintens0} as follows 
\begin{align}\label{GLgmrescondition0}
\mathscr{X}_{m}-\mathscr{X}_{0}\in \mathscr{TK}^{g}_{m}(\mathscr{A},\mathscr{R}_0), 
\end{align}
with  \begin{align}\label{GLgmresconditionminim} 
||\mathscr{R}_m ||_{F}=\displaystyle \min_{\mathscr{X}-\mathscr{X}_{0}\in \mathscr{TK}^{g}_{m}(\mathscr{A},\mathscr{R}_0)}\left\lbrace ||\mathscr{B}-\mathscr{A}\star \mathscr{X}||_F
\right\rbrace.  \end{align}
\noindent 	From  \eqref{GLgmrescondition0}, $\mathscr{X}_{m}$ is expressed as    $\mathscr{X}_{m}=\mathscr{X}_{0}+ \mathbb{V}_{m}\star(\mathscr{  {Y}_{m}}\circledast \mathscr{I}_{ssn_3} )$ with $ \mathscr{  {Y}}=\left[ {\begin{array}{ {c}}
		{ {{\rm \bf y}}_{1 }}  \\
		\vdots \\
		\vdots\\
		{ {{\rm \bf y}}_{m }}\\
		\end{array}}\right]\in \mathbb{R}^{m\times 1\times n_{3}}$. Therefore  the minimisation  problem \eqref{GLgmresconditionminim} is equivalent to 
	\begin{align}\label{lowy}
	||\mathscr{R}_m ||_{F}&=  \displaystyle \min_{ \mathscr{  {Y}}\in \mathbb{R}^{m\times 1\times n_{3}}}
	||\mathscr{R}_{0}-\mathscr{A}\star\mathbb{V}_{m}\star( \mathscr{Y}\circledast \mathscr{I}_{ssn_3} )||_F.
	%&=\mathbb{V}_{m+1} \star(\mathscr{ \widetilde{H}}_m\circledast \mathscr{I}_{s,s,n_3})
	\end{align}  
	Using   Proposition \ref{propoortho} and   Proposition \ref{kronprop1} and Step 2 of  Algorithm \ref{TTGA}, we get 
	\begin{align*}
&	||\mathscr{R}_{0}-\mathscr{A}\star\mathbb{V}_{m}\star( \mathscr{Y}\circledast \mathscr{I}_{ssn_3} )||_F\\
&=|| {\rm \bf r}_{1,1} \divideontimes\mathscr{V}_{1}   -(\mathbb{V}_{m+1} \star(\mathscr{ \widetilde{H}}_m\circledast \mathscr{I}_{ssn_3}))\star( \mathscr{Y}\circledast \mathscr{I}_{ssn_3} ))||_F\\
	&=||\mathscr{V}_{1}\star({\rm \bf r}_{1,1}\circledast \mathscr{I}_{ssn_3})-(\mathbb{V}_{m+1} \star(\mathscr{ \widetilde{H}}_m\circledast \mathscr{I}_{ssn_3}))\star( \mathscr{Y}\circledast \mathscr{I}_{ssn_3} ))||_F\\
	&=||\mathbb{V}_{m+1}\star  (\mathscr{E}_1^{(m+1)}\circledast \mathscr{I}_{ssn_3})\star({\rm \bf r}_{1,1}\circledast \mathscr{I}_{ssn_3})-(\mathbb{V}_{m+1} \star ((\mathscr{ \widetilde{H}}_m\star\mathscr{Y})\circledast \mathscr{I}_{ssn_3} ))||_F\\
	&=||\mathbb{V}_{m+1}\star(( \mathscr{E}_1^{(m+1)}\star {\rm \bf r}_{1,1} )\circledast \mathscr{I}_{s,s,n_3})   -( (\mathscr{ \widetilde{H}}_m\star\mathscr{Y})  \circledast \mathscr{I}_{ssn_3} ))||_F\\
	&= ||\mathbb{V}_{m+1}\star(  \mathscr{E}_1^{(m+1)}\star {\rm \bf r}_{1,1} -( \mathscr{ \widetilde{H}}_m\star\mathscr{Y}))  \circledast \mathscr{I}_{ssn_3} )||_F\\
	&=|| (  \mathscr{E}_1^{(m+1)}\star {\rm \bf r}_{1,1}  -( \mathscr{ \widetilde{H}}_m\star\mathscr{Y}))   ||_{T_{\ell_{2}}},
	\end{align*}
	%	where $e_{1,1,:}\in \mathbb{R}^{m+1\times 1\times n_{3}}$ with 1 in the  $(1,1,1)$ position and zero in the other positions.
	where $\mathscr{E}_1^{(m+1)}= \left[ {\begin{array}{ {c}}
		{\rm \bf e}  \\
		{\rm \bf o}\\
		%{\rm \bf o}\\
		\vdots\\
		{\rm \bf o}
		\end{array}}\right]\in \mathbb{R}^{m+1 \times 1 \times   n_{3}}$.
	Therefore, the tensor $	\mathscr{Y}_m$ solving the minimisation problem \eqref{lowy} is given by 
	\begin{equation}\label{Gmressol}
	\mathscr{Y}_m=  \arg \displaystyle  \min_{ \mathscr{  {Y}}\in \mathbb{R}^{m\times 1\times n_{3}}}|| \mathscr{E}_1^{(m+1)}\star {\tt r}_{1,1} -( \mathscr{ \widetilde{H}}_m\star\mathscr{Y})   ||_{T_{\ell_{2}}}. 
	\end{equation}
	The approximate solution is given by
	\begin{equation}\label{solutdegmresxm}
	\mathscr{X}_{m}=\mathscr{X}_{0}+ \mathbb{V}_{m}\star(\mathscr{  {Y}}_{m}\circledast \mathscr{I}_{ssn_3} ) 
	\end{equation}

\subsection{Implementation of tensor  tubal-global GMRES  method}
For the implementation of the tensor tubal-global GMRES algorithm, , we use the 
	tensor QR decomposition to solve the minimization problem (\ref{Gmressol}). The  Tensor QR decomposition is based on the application of the QR matrix decomposition to  each sub-block of  the obtained  block diagonal matrix in the Fourier domain. In other words, for  $\mathscr{  {F}}\in \mathbb{R}^{n\times m \times n_3}$  we have  $\mathscr{F}=\mathscr{  {Q}}\star\mathscr{  {R}}$ 
	where  $\mathscr{  {Q}}$ is an $n\times n \times  n_{3}$ orthogonal tensor   $\mathscr{Q}^{T}\star \mathscr{Q}   =\mathscr{Q} \star \mathscr{Q}^T=\mathscr{I}_{ n n n_{3}}$
	%where  $\mathscr{  {Q}}$ is an $n\times m \times n_{3}$ orthogonal tensor   $\mathscr{Q}^{T}\star \mathscr{Q} =\mathscr{I}_{ m  m  n_{3}}$ 
	and $\mathscr{  {R}}\in \mathbb{R}^{n\times m \times n_3}$ triangular tensor.\\ The different steps are summarized as follows

\begin{algorithm}[H]
		\begin{algorithmic}[1]
	%\caption{Tensor T-QR decomposition}\label{TQRsimple}
	%\begin{enumerate}
	\REQUIRE $\mathscr{F}\in {\mathbb R}^{n\times m \times n_{3}} $.
	\ENSURE  Orthogonal tensor   $\mathscr{Q}\in {\mathbb R}^{n\times n \times n_{3}}$ and  triangular tensor $\mathscr{R}\in {\mathbb R}^{n\times m \times n_{3}} $ .
		\STATE Set $\mathscr{\tilde{F}}={\tt fft}(\mathscr{ F},[\,],3),$   
			\FOR {$i=1,\ldots,n_3$}
		\STATE $	 [ { {Q}}^{( i)} , { {R}}^{( i)}] =   \text{QR}  ( {F}^{( i)} )$, \quad (matrix QR decomposition )
			\ENDFOR
			\STATE $ \mathscr{Q} = {\tt ifft} (\tilde{\mathscr{Q}} ,[\,],3)$, $\mathscr{R} = {\tt ifft} (\tilde{\mathscr{R}} ,[\,],3)$	
	\end{algorithmic}
\caption{Tensor T-QR decomposition}\label{TQRsimple}
\end{algorithm}
\noindent To solve the problem \eqref{Gmressol}, we need the result of the following conservation-norm property.   	
	\begin{proposition}\label{QRvial2norm}
		Let $ \mathscr{  {Y}}\in \mathbb{R}^{m\times 1\times n_{3}}$ and $\mathscr{  {Q}} \in \mathbb{R}^{m\times m\times n_{3}}$ such that $\mathscr{Q}^{T} \star\mathscr{Q}=\mathscr{Q} \star\mathscr{Q}^T=\mathscr{I}_{ m m n_{3}}$. Then 
		\begin{align*}
		||\mathscr{  {Q}}\star\mathscr{  {Y}} ||_{T_{\ell_{2}}}=||\mathscr{  {Y}}||_{T_{\ell_{2}}}	\end{align*}
\end{proposition}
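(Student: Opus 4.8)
The plan is to carry the identity over to the Fourier domain, where the $T$-product becomes a slicewise matrix product and the $T$-$\ell_{2}$ norm is, up to the factor $1/n_{3}$, just the sum of the squared Euclidean norms of the Fourier slices; the statement then collapses to the elementary fact that a unitary matrix preserves the Euclidean norm of a vector. First I would record, using the block-diagonalization \eqref{dft8} and the equivalence of $\mathscr{C}=\mathscr{A}\star\mathscr{B}$ with ${\bf C}={\bf A}{\bf B}$ in the Fourier domain, that for each $i=1,\dots,n_{3}$ the $i$-th frontal slice of $(\mathscr{Q}\star\mathscr{Y})\times_{3}F_{n_{3}}$ equals $Q^{(i)}Y^{(i)}$, where $Q^{(i)}=(\mathscr{Q}\times_{3}F_{n_{3}})^{(i)}\in\mathbb{C}^{m\times m}$ and $Y^{(i)}=(\mathscr{Y}\times_{3}F_{n_{3}})^{(i)}\in\mathbb{C}^{m}$, and that every Fourier slice of $\mathscr{I}_{mmn_{3}}$ equals $I_{m}$.

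The second, and only genuinely non-routine, step is to translate the hypothesis $\mathscr{Q}^{T}\star\mathscr{Q}=\mathscr{Q}\star\mathscr{Q}^{T}=\mathscr{I}_{mmn_{3}}$ into a statement about the matrices $Q^{(i)}$. Since ${\rm bcirc}(\mathscr{Q}^{T})={\rm bcirc}(\mathscr{Q})^{T}$ and ${\rm bcirc}(\mathscr{Q})$ is real, conjugate-transposing \eqref{dft8} shows that the $i$-th Fourier slice of $\mathscr{Q}^{T}$ is the conjugate transpose $(Q^{(i)})^{H}$ of that of $\mathscr{Q}$ (equivalently, one may invoke the standard fact that the $T$-transpose acts slicewise as conjugate transposition). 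Feeding this into the slicewise form of $\star$ from the first step, the hypothesis becomes $(Q^{(i)})^{H}Q^{(i)}=Q^{(i)}(Q^{(i)})^{H}=I_{m}$ for every $i=1,\dots,n_{3}$, i.e.\ each $Q^{(i)}$ is unitary. This is the step I expect to require the most care, since it rests on the precise interaction between the $T$-transpose and the DFT; the rest is bookkeeping.

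Finally, since each $Q^{(i)}$ is unitary and $Y^{(i)}$ is a column vector, $\|Q^{(i)}Y^{(i)}\|_{2}=\|Y^{(i)}\|_{2}$, so that by the definition of the $T$-$\ell_{2}$ norm
\begin{align*}
\|\mathscr{Q}\star\mathscr{Y}\|_{T_{\ell_{2}}}^{2}
&=\frac{1}{n_{3}}\sum_{i=1}^{n_{3}}\|Q^{(i)}Y^{(i)}\|_{2}^{2}
=\frac{1}{n_{3}}\sum_{i=1}^{n_{3}}\|Y^{(i)}\|_{2}^{2}
=\|\mathscr{Y}\|_{T_{\ell_{2}}}^{2},
\end{align*}
and taking square roots gives $\|\mathscr{Q}\star\mathscr{Y}\|_{T_{\ell_{2}}}=\|\mathscr{Y}\|_{T_{\ell_{2}}}$. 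This is essentially the same Fourier-domain computation as in the proof of Proposition~\ref{propoortho}, with unitarity of the slices $Q^{(i)}$ in place of the Kronecker factorization used there.
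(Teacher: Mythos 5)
Your proof is correct and follows the same route the paper intends: the paper's proof is just the remark that the result is ``a direct application of the T-$\ell_{2}$ norm,'' i.e.\ exactly the Fourier-domain, slicewise computation you carry out. You simply supply the details the paper leaves implicit, in particular the key observation that the T-transpose acts on Fourier slices as conjugate transposition, so the hypothesis makes each slice $Q^{(i)}$ unitary and the norm is preserved slice by slice.
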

\medskip
\begin{proof} 
	%{\bf Proof.} 
	The proof is a direct application of the T-${\ell}_{2}$ norm. 
\end{proof}
\medskip

\noindent Now, we  apply the T-QR decomposition to $ \mathscr{ \widetilde{H}}_m$, and by  using Proposition \ref{QRvial2norm},  we get  
\begin{align*}
|| \mathscr{E}_1^{(m+1)}\star {\tt r}_{1,1}  -( \mathscr{ \widetilde{H}}_m\star\mathscr{Y})   ||_{T_{\ell_{2}}}&=||\mathscr{Q}_m^T\star (\mathscr{E}_1^{(m+1)}\star {\tt r}_{1,1}  -( \mathscr{ \widetilde{H}}_m\star\mathscr{Y}))   ||_{T_{\ell_{2}}}\\
&=||\mathscr{Q}_m^T\star (\mathscr{E}_1^{(m+1)}\star {\tt r}_{1,1})  -\mathscr{Q}_m^T\star ( \mathscr{ \widetilde{H}}_m\star\mathscr{Y})   ||_{T_{\ell_{2}}}\\
&=||\widetilde{\mathscr{G}}_m   -\widetilde{\mathscr{R}}_m \star\mathscr{Y})   ||_{T_{\ell_{2}}}, 
\end{align*}
where $\widetilde{\mathscr{G}}_m=\mathscr{Q}_m^T\star (\mathscr{E}_1^{(m+1)}\star {\rm \bf r}_{1,1}) $ and $\widetilde{\mathscr{R}}_m =\mathscr{Q}_m^T\star  \mathscr{ \widetilde{H}}_m$.\\

\noindent The next property gives the solution $\mathscr{Y}_m$ of the minimisation problem \eqref{Gmressol}.

\begin{proposition}
	Let $\widetilde{\mathscr{G}}_m$ and $\widetilde{\mathscr{R}}_m$  given as 
	\begin{equation}\label{grm}
	\widetilde{\mathscr{R}}_m =\mathscr{Q}_m^T\star  \mathscr{ \widetilde{H}}_m, \; {\rm and}\; 
	\widetilde{\mathscr{G}}_m =\mathscr{Q}_m^T\star ( \mathscr{E}_1^{(m+1)}\star {\rm \bf r}_{1,1})=\left[ {\begin{array}{ {c}}
		{ {{\rm \bf g}}_{1 }}  \\
		\vdots \\
		{ {{\rm \bf g}}_{m+1 }}\\
		\end{array}}\right] \in {\mathbb R}^{m+1\times 1 \times n_{3}}.
	\end{equation}
	
	\noindent The solution $\mathscr{Y}_m=  \text{arg } \displaystyle  \min_{ \mathscr{  {Y}}\in \mathbb{R}^{m\times 1\times n_{3}}}|| \mathscr{E}_1^{(m+1)}\star {\rm \bf r}_{1,1}  -( \mathscr{ \widetilde{H}}_m\star\mathscr{Y}))   ||_{T_{\ell_{2}}}
		$ is given by  solving  the following triangular tensor problem
		\begin{equation}\label{Triangsup}
		\mathscr{R}_m \star\mathscr{Y}_m=  \mathscr{G}_m,
		\end{equation}
		where $\mathscr{R}_m=\widetilde{\mathscr{R}}_m(1:m,:,:)$ the tensor  obtained by deleting the last horizontal slice of $\widetilde{\mathscr{R}}$  and $\mathscr{G}_m=\left[ {\begin{array}{ {c}}
			{ {{\rm \bf g}}_{1 }}  \\
			\vdots \\
			{ {{\rm \bf g}}_{m }}\\
			\end{array}}\right] \in {\mathbb R}^{m\times 1 \times n_{3}}  $.
\end{proposition}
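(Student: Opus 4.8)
The plan is to push the least-squares problem into the Fourier domain, where the $\star$-product acts slice-wise as an ordinary matrix product and the $T$-$\ell_{2}$ norm is a scaled sum of Euclidean norms, and then to read off the standard GMRES reduction in each slice.

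By the chain of equalities established just before the statement (which already uses the $T$-QR factorisation of $\widetilde{\mathscr{H}}_m$ together with Proposition~\ref{QRvial2norm}), the minimisation \eqref{Gmressol} has been reduced to
\[
\|\mathscr{R}_m\|_F \;=\; \min_{\mathscr{Y}\in\R^{m\times 1\times n_3}}\big\|\widetilde{\mathscr{G}}_m-\widetilde{\mathscr{R}}_m\star\mathscr{Y}\big\|_{T_{\ell_{2}}},
\]
so it remains to solve this problem. Write $\widehat{R}^{(i)}:=(\widetilde{\mathscr{R}}_m\times_3 F_{n_3})^{(i)}$, $\widehat{G}^{(i)}:=(\widetilde{\mathscr{G}}_m\times_3 F_{n_3})^{(i)}$ and $\widehat{Y}^{(i)}:=(\mathscr{Y}\times_3 F_{n_3})^{(i)}$ for the $i$-th frontal slices in the Fourier domain. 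Using that $(\widetilde{\mathscr{R}}_m\star\mathscr{Y})\times_3 F_{n_3}$ has $i$-th slice $\widehat{R}^{(i)}\widehat{Y}^{(i)}$ and the definition of the $T$-$\ell_{2}$ norm,
\[
\big\|\widetilde{\mathscr{G}}_m-\widetilde{\mathscr{R}}_m\star\mathscr{Y}\big\|_{T_{\ell_{2}}}^2 \;=\; \frac{1}{n_3}\sum_{i=1}^{n_3}\big\|\widehat{G}^{(i)}-\widehat{R}^{(i)}\widehat{Y}^{(i)}\big\|_2^2 .
\]
Since the mode-3 DFT is an invertible linear change of variables between $\mathscr{Y}$ and the family $(\widehat{Y}^{(i)})_{i=1}^{n_3}$, and the summands depend on disjoint blocks of unknowns, minimising the left-hand side is equivalent to minimising each summand separately over $\widehat{Y}^{(i)}\in\C^m$.

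Next I would exploit the triangular structure. Every frontal slice of $\mathscr{H}_m$, hence of $\widetilde{\mathscr{H}}_m$, is Hessenberg, so Algorithm~\ref{TQRsimple} reduces in the Fourier domain to an ordinary QR factorisation of the $(m+1)\times m$ matrix $(\widetilde{\mathscr{H}}_m\times_3 F_{n_3})^{(i)}$; consequently $\widehat{R}^{(i)}$ is $(m+1)\times m$ upper triangular and its last row vanishes. Partitioning
\[
\widehat{R}^{(i)}=\begin{bmatrix} R^{(i)} \\ 0\end{bmatrix}, \qquad \widehat{G}^{(i)}=\begin{bmatrix} G^{(i)} \\ \gamma^{(i)}\end{bmatrix},
\]
the $i$-th summand equals $\big\|G^{(i)}-R^{(i)}\widehat{Y}^{(i)}\big\|_2^2+|\gamma^{(i)}|^2$. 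Because $\mathscr{A}$ is nonsingular and the tubal-global Arnoldi run produced a $\widetilde{\mathscr{H}}_m$ whose slices have full column rank (no breakdown), each $R^{(i)}$ is an invertible $m\times m$ upper triangular matrix; hence this quantity is minimised, with its first term reduced to zero, exactly when $R^{(i)}\widehat{Y}^{(i)}=G^{(i)}$. Now $R^{(i)}$ is precisely the $i$-th Fourier slice of $\mathscr{R}_m=\widetilde{\mathscr{R}}_m(1:m,:,:)$ and $G^{(i)}$ that of $\mathscr{G}_m$, so these relations say that $(\mathscr{R}_m\star\mathscr{Y})\times_3 F_{n_3}$ and $\mathscr{G}_m\times_3 F_{n_3}$ have the same $i$-th frontal slice for every $i$; applying the inverse DFT along the third mode this is exactly the triangular tensor equation $\mathscr{R}_m\star\mathscr{Y}_m=\mathscr{G}_m$ of \eqref{Triangsup}.

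The norm expansion and the per-slice least-squares algebra are routine; the step that deserves the most care is the decoupling — that minimising the $T$-$\ell_{2}$ residual over $\mathscr{Y}$ is the same as minimising each Fourier-slice residual independently — which relies on the mode-3 DFT being an invertible change of variables and on $\star$ acting as a block-diagonal matrix product in the transformed domain. It is also worth spelling out why each $R^{(i)}$ is invertible, since that is exactly what forces the first block of the residual to zero and makes $\mathscr{Y}_m$ unique, leaving $\|\mathscr{R}_m\|_F=\big(\frac{1}{n_3}\sum_{i=1}^{n_3}|\gamma^{(i)}|^2\big)^{1/2}$ as a byproduct.
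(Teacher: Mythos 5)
Your proof is correct and takes essentially the same route as the paper: the reduction via the T-QR factorisation of $\mathscr{\widetilde{H}}_m$ and Proposition \ref{QRvial2norm}, the splitting of the residual into the last component plus $\|\mathscr{G}_m-\mathscr{R}_m\star\mathscr{Y}\|_{T_{\ell_2}}$ (which rests on the zero last row of the Fourier slices of $\widetilde{\mathscr{R}}_m$, a point the paper leaves implicit and you make explicit), and the conclusion that the minimiser solves the triangular system \eqref{Triangsup} under invertibility of $\mathscr{R}_m$, which the paper simply assumes. The only step to tighten is the slice-wise decoupling: for a real $\mathscr{Y}$ the Fourier slices $\widehat{Y}^{(i)}$ are not independent over $\mathbb{C}^m$ but obey conjugate symmetry, yet since the data tensors are real and each slice problem has a unique minimiser, those minimisers inherit the symmetry and produce a real $\mathscr{Y}_m$, so your conclusion stands.
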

\begin{proof} We have 
	%	{\bf Proof.} 
	\begin{align*}
		|| \mathscr{E}_1^{(m+1)}\star {\rm \bf r}_{1,1}  -( \mathscr{ \widetilde{H}}_m\star\mathscr{Y})   ||_{T_{\ell_{2}}}&=||\mathscr{Q}_m^T\star (\mathscr{E}_1^{(m+1)}\star {\rm \bf r}_{1,1}  -( \mathscr{ \widetilde{H}}_m\star\mathscr{Y}))   ||_{T_{\ell_{2}}}\\
		&=||\mathscr{Q}_m^T\star (\mathscr{E}_1^{(m+1)}\star {\rm \bf r}_{1,1})  -\mathscr{Q}_m^T\star ( \mathscr{ \widetilde{H}}_m\star\mathscr{Y})   ||_{T_{\ell_{2}}}\\
		&=||\widetilde{\mathscr{G}}_m   -\widetilde{\mathscr{R}}_m \star\mathscr{Y})   ||_{T_{\ell_{2}}}\\
		&=  ||	{ {{\rm \bf g}}_{m+1 }}   ||^2_{T_{\ell_{2}}}+
		|| \mathscr{G}_m   - \mathscr{R}_m \star\mathscr{Y})   ||^2_{T_{\ell_{2}}}.
		\end{align*}
		Assuming that $\mathscr{R}_m$ is invertible,  then  $\mathscr{Y}_m$  solves the  triangular tensor problem \eqref{Triangsup}.
\end{proof}

\noindent  In the  following, we introduce the tubal-back substitution method  for  solving the  equation (\ref{Triangsup}). This 
	method follows the same steps as for   matrix backward substitution, where the tube fibers (3-mode fibers), lateral slices and T-product play the role of  scalars, vectors and matrix product, respectively. In other words,  the solution of  the  triangular  tensor system   (\ref{Triangsup})  can be obtained as follows 
	$$
	\left[ \begin{array}{*{20}{c}}
	{{\rm \bf r}_{1,1  }}&{{\rm \bf r}_{1,2  }}& \ldots & {{\rm \bf r}_{1,k  }}  \\
	& {{\rm \bf r}_{2,2}}&\cdots& {{\rm \bf r}_{2,k}}\\
	%  	 &\ddots&\ddots& {{{\rm \bf r}_{2,k}}\\
	&   & \ddots & \vdots \\
	&    &       &    {\rm \bf r}_{m,m}
	\end{array}  \right]  \star \left[ {\begin{array}{ {c}}
		{ {{\rm \bf y}}_{1 }}  \\
		\vdots \\
		\vdots\\
		{ {{\rm \bf y}}_{m }}\\
		\end{array}}\right]= \left[ {\begin{array}{ {c}}
		{ {{\rm \bf g}}_{1 }}  \\
		\vdots \\
		\vdots \\
		{ {{\rm \bf g}}_{m }}\\
		\end{array}}\right]  $$
with ${{\rm \bf y}}_{m }= ({\rm \bf r}_{m,m})^{-1}\star {\tt g}_{m}$ 
and  $$ { {{\rm \bf y}}_{i }}=({\rm \bf r}_{i,i})^{-1}\star({ {{\rm \bf g}}_{i }}-\sum_{j=i+1}^{m}{\rm \bf r}_{i,j}\star {\rm \bf y}_{j})\;\;\text{ for } i=m-1,m-2,\ldots,1,$$ where  $({\rm \bf r}_{i,i})^{-1}$ stands for the inverse of  the tube fiber $ {\rm \bf r}_{i,i}$ (Definition \ref{inverstubscalar} ) for $ i=m,m-1,\ldots,1  $.\\

\noindent The whole steps of the tensor tubal-global GMRES algorithm are summarized in the following algorithm.

\begin{algorithm}[H]
		\begin{algorithmic}[1]
	  %	\begin{enumerate}
		\REQUIRE  $\mathscr{A}\in \mathbb{R}^{n\times n \times n_3}$, $\mathscr{V}$, $\mathscr{B}$, $\mathscr{X}_{0}\in \mathbb{R}^{n\times s \times n_3}$, the maximum number of iterations  $\text{Iter}_{\text{max}} $,   the restart parameter $m$  and a tolerance  $tol>0$.
		\ENSURE	  $\mathscr{X}_m\in \mathbb{R}^{n\times s \times n_3}$ the approximate solution of  (\ref{syslintens0}).  
		\STATE  Compute $\mathscr{R}_{0}=\mathscr{B}-\mathscr{A}\star\mathscr{X}_{0} $.
		\FOR {$k=1,\ldots,\text{Iter}_{\text{max}}$}
	%	\begin{enumerate}
 	\STATE   Apply Algorithm \ref{TTGA} to compute  $\mathbb{V}_{m}$ and  $\mathscr{ \widetilde{H}}_m$.
	 			\STATE Compute the T-QR decomposition of  $\mathscr{ \widetilde{H}}_m$ using Algorithm \ref{TQRsimple} . 
 	\STATE  Compute  $\mathscr{R}_m$ and $\mathscr{G}_m$
 		using the relations (\ref{grm}) .
	 	\STATE  Solve  the triangular tensor system (\ref{Triangsup}) to obtain  $\mathscr{Y}_m$ .
	 	\STATE  Compute  $\mathscr{X}_{m}=\mathscr{X}_{0}+ \mathbb{V}_{m}\star(\mathscr{Y}_m\circledast \mathscr{I}_{ssn_3} )$
		%\end{enumerate}
\STATE	$~~~~$ \textbf{if}$||\mathscr{R}_{m}||_F<tol$, 

$~~~~~~~~$\textbf{Stop} 

	 	\STATE $~~~~$\textbf{ else }  $\mathscr{X}_{0}=\mathscr{X}_{m}$ and go to Step 1.
	 	
	 	$~~~~$ \textbf{end if}
		 \ENDFOR 
	\end{algorithmic}
\caption{The Tensor Tubal-Global GMRES (m)} 
 \label{TTGGMRES}
\end{algorithm}

\section{Tensor   tubal-global Golub Kahan algorithm}
\noindent Instead of using the tensor global Arnoldi process to generate a basis for the projected  subspace, we can use the tensor tubal global Lanczos process. Here, we will use the tensor Golub Kahan algorithm related to the T-product. We notice  that we already defined in \cite{elguide2} another version of the tensor Golub Kahan algorithm for ill-posed problems  with applications to color image processing.\\
Consider the least squares  problem of  tensors
\begin{equation}\label{syslintens01}
\displaystyle \min_{\mathscr{X} } \Vert \mathscr{A}\star \mathscr{X} -  \mathscr{B}\Vert_F
\end{equation}  
where   $\mathcal {A} \in \mathbb{R}^{n_{1}\times n_{2}\times n_{3}}$  and    $\mathcal {B} \in \mathbb{R}^{n_{1}\times  s \times n_{3}}$.  The tensor tubal-global Golub Kahan bidiagonalization algorithm (Algorithm \ref{TTG-GK}) produces a T-orthogonormal basis   $\mathbb{U}_{k}=[\mathscr{  {U}}_{1},\ldots,\mathscr{  {U}}_{k},\mathscr{  {U}}_{k+1}]  \in \mathbb{R}^{n_{1}\times (k+1)s\times n_{3}} $ and $\mathbb{V}_{k}=[\mathscr{  {V}}_{1},\ldots,\mathscr{  {V}}_{k}]   \in \mathbb{R}^{n_{2}\times ks\times n_{3}} $     of the tensor Krylov subspace  $\mathscr{TK}^{g}_k(\mathscr{A}\star \mathscr{A}^T,\mathscr{A}\star \mathscr{A}^T\star \mathscr{B})$ and $\mathscr{TK}^{g}_k(\mathscr{A}^T\star \mathscr{A},\mathscr{A}^T\star \mathscr{B})$, respectively.  The algorithm is given as follows

\begin{algorithm}[H]
\begin{algorithmic}[1]
%	\begin{enumerate}
\REQUIRE The tensors $\mathscr {A}$, $\mathcal {B}$,   and an integer $m$.
		\STATE  	Set $\mathscr{V}_{0}=0   \in \mathbb{R}^{n_{1} \times s \times n_{3}} $ and $[\mathscr{U}_{1},{\rm \bf a}_{1 }]= \text{Normalization}(\mathscr{B})$.
		\FOR{ $j=1,\ldots,k$}
	%	ù\begin{enumerate}
			\STATE $\widetilde {\mathscr {V}}= \mathscr {A}^T \star \mathscr {U}_{j}-{\rm \bf a}_{j} \divideontimes \mathscr {V}_{j-1}$
			\STATE Set $[\mathscr{V}_{j},{\rm \bf b}_{j }]=  Normalization(\widetilde {\mathscr {V}})$	 
			\STATE $\widetilde {\mathscr {U}}=\mathscr {A} \star \mathscr {V}_j-{\rm \bf b}_j\divideontimes \mathscr {U} _{j}$
			\STATE $[\mathscr{U}_{j+1},{\rm \bf a}_{j+1 }]=  \text{Normalization}(\widetilde {\mathscr {U}})$	 
	%	ùù\end{enumerate}
		\ENDFOR
	\end{algorithmic}
		\caption{The Tensor Tubal-Global Golub-Kahan algorithm}
		\label{TTG-GK}
\end{algorithm}

\medskip 

\noindent Let $\widetilde{\mathscr{   {C}}}_k$ be the upper bidiagonal $(k+1) \times k\times n_3$ tensor 
$$ \widetilde{\mathscr{   {C}}}_k=\left[ \begin{array}{*{20}{c}}
{{{\rm \bf b}_1  }}&{{ }}& &   \\
{\rm \bf a}_{2}&{{{\rm \bf b}_2}}&\ddots& \\
&\ddots&\ddots& \\
&   & {\rm \bf a}_{k}  &  {\rm \bf b}_{k}\\
&    &       &    {\rm \bf a}_{k+1}
\end{array}  \right] 
$$
and let $ {\mathscr{   {C}}}_k$ be the $(k \times k\times n_3)$ tensor obtain   by deleting  the last horizontal slice of  $\widetilde{\mathscr{   {C}}}_k$ where the ${\rm \bf a}_i$'s and ${\rm \bf b}_i$'s are fibers then we have the following results.

\begin{proposition}
	\noindent 	The tensors $\mathbb{V}_{k}=[\mathscr{  {V}}_{1},\ldots,\mathscr{  {V}}_{k}]   \in \mathbb{R}^{n_{2}\times ms\times n_{3}} $ and  $\mathbb{U}_{k}=[\mathscr{  {U}}_{1},\ldots,\mathscr{  {U}}_{k}]  \in \mathbb{R}^{n_{1}\times ks\times n_{3}} $  given by Algorithm\ref{TTG-GK},  have orthogonal   tensors
	$ \mathscr{  {V}}_{i}\in \mathbb{R}^{n_{2}\times  s\times n_{3}}$ and $\mathscr{  {U}}_{i}\in \mathbb{R}^{n_{1}\times  s\times n_{3}}$, respectively, i.e. \begin{align*}
	%\label{bilinearform3d1}
	\left\langle {\mathscr{V}_{i}, \mathscr{V}_{j}} \right\rangle_T =\left\langle {\mathscr{U}_{i}, \mathscr{U}_{j}} \right\rangle_T
	=\begin{cases}
	{\rm \bf e}&i= j \\
	0&i\neq j.
	\end{cases}.
	\end{align*}
	
\end{proposition}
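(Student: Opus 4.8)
The plan is to proceed by induction on $k$, closely following the argument used for the tubal-global Arnoldi process (the proposition just after Algorithm \ref{TTGA}), with the tube-fiber calculus of Proposition \ref{proprinnerprodfrob} replacing ordinary scalar arithmetic, and with the tubes ${\rm \bf a}_j$, ${\rm \bf b}_j$ returned by the normalization step playing the role that the nonzero normalization scalars play in the usual matrix Golub--Kahan process. An equivalent and arguably quicker route is to pass to the Fourier domain: since $\star$ acts slice-wise and the $T$-transpose becomes the conjugate transpose after {\tt fft}, Algorithm \ref{TTG-GK} decouples into $n_3$ independent copies of the classical (global) matrix Golub--Kahan bidiagonalization applied to the frontal slices $A^{(i)}$ and $B^{(i)}$ of $\widetilde{\mathscr{A}}$ and $\widetilde{\mathscr{B}}$, each of which is known to generate systems orthonormal for the inner product $(X,Y)\mapsto\text{trace}(X^{H}Y)$; transforming back, the $i$-th Fourier coefficient of $\langle\mathscr{V}_\ell,\mathscr{V}_j\rangle_T$ (resp. of $\langle\mathscr{U}_\ell,\mathscr{U}_j\rangle_T$) equals $\delta_{\ell j}$ for every $i$, which is exactly ${\rm \bf e}$ when $\ell=j$ and ${\rm \bf o}$ otherwise. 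I outline the direct induction below, to match the style of the other proofs.

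For the base case, Step~1 of Algorithm \ref{TTG-GK} calls the normalization procedure (Algorithm \ref{normalization12}), which guarantees $\langle\mathscr{U}_1,\mathscr{U}_1\rangle_T={\rm \bf e}$. For the inductive step, assume $\{\mathscr{U}_1,\dots,\mathscr{U}_j\}$ and $\{\mathscr{V}_1,\dots,\mathscr{V}_{j-1}\}$ are already $T$-orthonormal, and first establish $\langle\mathscr{V}_i,\mathscr{V}_j\rangle_T={\rm \bf o}$ for $i\le j-1$. Pairing the recurrence of Step~3, ${\rm \bf b}_j\divideontimes\mathscr{V}_j=\mathscr{A}^{T}\star\mathscr{U}_j-{\rm \bf a}_j\divideontimes\mathscr{V}_{j-1}$, against $\mathscr{V}_i$ and applying parts~1--3 of Proposition \ref{proprinnerprodfrob} yields
\[
{\rm \bf b}_j\star\langle\mathscr{V}_i,\mathscr{V}_j\rangle_T=\langle\mathscr{A}\star\mathscr{V}_i,\mathscr{U}_j\rangle_T-{\rm \bf a}_j\star\langle\mathscr{V}_i,\mathscr{V}_{j-1}\rangle_T .
\]
Next substitute the $\mathscr{U}$-recurrence of Step~5 at index $i$, namely $\mathscr{A}\star\mathscr{V}_i={\rm \bf a}_{i+1}\divideontimes\mathscr{U}_{i+1}+{\rm \bf b}_i\divideontimes\mathscr{U}_i$, so that the first term on the right becomes a combination of $\langle\mathscr{U}_{i+1},\mathscr{U}_j\rangle_T$ and $\langle\mathscr{U}_i,\mathscr{U}_j\rangle_T$. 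For $i\le j-2$ these, together with $\langle\mathscr{V}_i,\mathscr{V}_{j-1}\rangle_T$, all vanish by the inductive hypothesis; for $i=j-1$ the surviving terms cancel, in the same ${\rm \bf a}_j-{\rm \bf a}_j={\rm \bf o}$ pattern that appears in the Arnoldi proof. Either way ${\rm \bf b}_j\star\langle\mathscr{V}_i,\mathscr{V}_j\rangle_T={\rm \bf o}$, and since no breakdown is assumed, ${\rm \bf b}_j$ is an invertible tube fiber (Definition \ref{inverstubscalar}), hence $\langle\mathscr{V}_i,\mathscr{V}_j\rangle_T={\rm \bf o}$; the normalization in Step~4 then gives $\langle\mathscr{V}_j,\mathscr{V}_j\rangle_T={\rm \bf e}$. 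The relations $\langle\mathscr{U}_i,\mathscr{U}_{j+1}\rangle_T={\rm \bf o}$ for $i\le j$ and $\langle\mathscr{U}_{j+1},\mathscr{U}_{j+1}\rangle_T={\rm \bf e}$ follow symmetrically, this time starting from Step~5, using $\mathscr{A}^{T}\star\mathscr{U}_i={\rm \bf b}_i\divideontimes\mathscr{V}_i+{\rm \bf a}_i\divideontimes\mathscr{V}_{i-1}$ and the $T$-orthonormality of the $\mathscr{V}$'s just obtained, together with Step~6. This closes the induction.

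The delicate point, and the only one requiring more than routine bookkeeping, is that the Golub--Kahan recurrences are \emph{short}: each step removes only the component along the single most recent basis tensor, so full $T$-orthogonality cannot be read off directly but must be propagated by alternating between the $\mathscr{U}$- and $\mathscr{V}$-recurrences, with the adjoint identity $\langle\mathscr{A},\mathscr{X}\star\mathscr{B}\rangle_T=\langle\mathscr{X}^{T}\star\mathscr{A},\mathscr{B}\rangle_T$ of part~3 of Proposition \ref{proprinnerprodfrob} serving as the glue. This is the classical Lanczos-type argument; the only ingredients beyond the scalar case are the cancellation law ``${\rm \bf b}_j\star{\rm \bf t}={\rm \bf o}\Rightarrow{\rm \bf t}={\rm \bf o}$'' for invertible tube fibers (equivalently, the standing assumption that every ${\rm \bf a}_j,{\rm \bf b}_j$ has full tubal rank, so that no breakdown occurs in the normalization steps), and the observation that these normalization tubes, being built from slice-wise Frobenius norms, are self-adjoint and may therefore be moved across the pairing $\langle\cdot,\cdot\rangle_T$ without change.
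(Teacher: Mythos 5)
Your proof is correct and takes essentially the same route as the paper: induction on the step index, using the recurrences of Algorithm~\ref{TTG-GK} together with the tube-fiber properties of Proposition~\ref{proprinnerprodfrob} (the paper's own proof is just a two-line sketch of exactly this induction). You simply supply the details the paper leaves implicit — the alternation between the $\mathscr{U}$- and $\mathscr{V}$-recurrences, the cancellation at $i=j-1$, and the invertibility/realness of the normalization tubes needed to cancel them — so no further changes are needed.
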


\begin{proof}
	%  	{\bf Proof.}  
	This will be shown by induction on j. For  $j=1$,  Algorithm \ref{TTG-GK}   shows that $\langle \mathscr{V}_1, \mathscr{V}_1\rangle_T ={\rm \bf e}$ and  $ \langle \mathscr{U}_1, \mathscr{U}_1\rangle_T ={\rm \bf e}$.  Assume now that the result is true for some j. Then, from Algorithm \ref{TTG-GK} and using the results of  Proposition \ref{proprinnerprodfrob}, we conclude that $$\left\langle {\mathscr{V}_{i}, \mathscr{V}_{j}} \right\rangle_T=\left\langle {\mathscr{U}_{i}, \mathscr{U}_{j}} \right\rangle_T
	=\begin{cases}
	{\rm \bf e}&i= j \\
	0&i\neq j
	\end{cases},\; j=1,\ldots,k.$$
	%and we have $\mathbb{V}_m^T\diamondsuit \mathbb{V}_m=\mathscr{I}_{ m m n_{3}}$. 
\end{proof}

\begin{proposition} 
		The tensors produced by the tensor tubal-global Golub-Kahan algorithm satisfy the following relations
		\begin{eqnarray} \label{equa201}
		\mathscr{A} \star \mathbb{V}_k& = &\mathbb{U}_{k+1} \star({\widetilde {\mathscr{   {C}}}}_k\circledast \mathscr{I}_{ssn_3})    \\
		& = &\mathbb{U}_k\star({  {\mathscr{   {C}}}}_k\circledast \mathscr{I}_{ssn_3})+\mathscr{U}_{k+1} \star((
		{\tt a}_{k+1 }\star \mathscr{E}_{k})\circledast   \mathscr{I}_{ssn_3}), \; {\rm and } \\
		\mathscr{A}^{T}\star\mathbb{U}_{k}& = &\mathbb{V}_k  \star({ {\mathscr{   {C}}}}_k^T\circledast \mathscr{I}_{ssn_3}) \\  
		\mathscr{B}&=&\mathbb{U}_{k+1}\star((\mathscr{E}_1^{(k+1)}\star {\rm \bf a}_{1 }  )\circledast \mathscr{I}_{ssn_3} ). 
		\end{eqnarray}
	\end{proposition}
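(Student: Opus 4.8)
The plan is to extract the two coupled three-term recurrences that are implicit in Algorithm~\ref{TTG-GK} and then to assemble them lateral-slice by lateral-slice, in exactly the way the tubal-global Arnoldi identities were obtained in the proof of Proposition~\ref{T-GlobalArnolproposit}. Since Line~4 of Algorithm~\ref{TTG-GK} normalizes so that $\widetilde{\mathscr{V}}={\rm \bf b}_j\divideontimes\mathscr{V}_j$ and Line~6 so that $\widetilde{\mathscr{U}}={\rm \bf a}_{j+1}\divideontimes\mathscr{U}_{j+1}$, Lines~3 and~5 rearrange into
\begin{equation*}
\mathscr{A}^T\star\mathscr{U}_j={\rm \bf a}_j\divideontimes\mathscr{V}_{j-1}+{\rm \bf b}_j\divideontimes\mathscr{V}_j
\quad\text{and}\quad
\mathscr{A}\star\mathscr{V}_j={\rm \bf b}_j\divideontimes\mathscr{U}_j+{\rm \bf a}_{j+1}\divideontimes\mathscr{U}_{j+1},
\end{equation*}
for $j=1,\ldots,k$, with the convention $\mathscr{V}_0=0$ fixed in Line~1. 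The only tools then needed are the identity ${\rm \bf a}\divideontimes\mathscr{Q}=\mathscr{Q}\star({\rm \bf a}\circledast\mathscr{I}_{ssn_3})$, the block-product rules of Proposition~\ref{propoblock}, and the multiplicativity of the T-Kronecker product from Proposition~\ref{kronprop1}(2).

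For the first relation I would fix $j$ and rewrite $\mathscr{A}\star\mathscr{V}_j={\rm \bf b}_j\divideontimes\mathscr{U}_j+{\rm \bf a}_{j+1}\divideontimes\mathscr{U}_{j+1}$ as $[\mathscr{U}_1,\ldots,\mathscr{U}_{k+1}]\star((\widetilde{\mathscr{C}}_k)_j\circledast\mathscr{I}_{ssn_3})$, where $(\widetilde{\mathscr{C}}_k)_j\in\mathbb{R}^{(k+1)\times 1\times n_3}$ denotes the $j$-th lateral slice of $\widetilde{\mathscr{C}}_k$, namely the column of tube fibers carrying ${\rm \bf b}_j$ in row $j$ and ${\rm \bf a}_{j+1}$ in row $j+1$. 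Concatenating these $k$ identities and invoking Proposition~\ref{propoblock} together with Proposition~\ref{kronprop1}(2) yields $\mathscr{A}\star\mathbb{V}_k=\mathbb{U}_{k+1}\star(\widetilde{\mathscr{C}}_k\circledast\mathscr{I}_{ssn_3})$. Peeling off the last horizontal slice $[{\rm \bf o},\ldots,{\rm \bf o},{\rm \bf a}_{k+1}]={\rm \bf a}_{k+1}\star\mathscr{E}_k$ of $\widetilde{\mathscr{C}}_k$ --- exactly as the term ${\rm \bf h}_{m+1,m}\star\mathscr{E}_m$ was split off in Proposition~\ref{T-GlobalArnolproposit} --- and distributing $\circledast$ over the block sum gives the second (split) form, with trailing term $\mathscr{U}_{k+1}\star(({\rm \bf a}_{k+1}\star\mathscr{E}_k)\circledast\mathscr{I}_{ssn_3})$. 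The relation $\mathscr{A}^T\star\mathbb{U}_k=\mathbb{V}_k\star(\mathscr{C}_k^T\circledast\mathscr{I}_{ssn_3})$ is obtained the same way from the first recurrence: the coefficient ${\rm \bf a}_j$ of $\mathscr{V}_{j-1}$ and the coefficient ${\rm \bf b}_j$ of $\mathscr{V}_j$ in $\mathscr{A}^T\star\mathscr{U}_j$ sit in rows $j-1$ and $j$ of the $j$-th column, i.e. in column $j$ of $\mathscr{C}_k^T$, while the boundary term ${\rm \bf a}_1\divideontimes\mathscr{V}_0$ drops out. Finally, Line~1 gives $\mathscr{B}={\rm \bf a}_1\divideontimes\mathscr{U}_1=\mathscr{U}_1\star({\rm \bf a}_1\circledast\mathscr{I}_{ssn_3})$, and since $\mathscr{U}_1=\mathbb{U}_{k+1}\star(\mathscr{E}_1^{(k+1)}\circledast\mathscr{I}_{ssn_3})$, one more application of Proposition~\ref{kronprop1}(2) produces $\mathscr{B}=\mathbb{U}_{k+1}\star((\mathscr{E}_1^{(k+1)}\star{\rm \bf a}_1)\circledast\mathscr{I}_{ssn_3})$.

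Most of this is routine bookkeeping; the point that needs care is the boundary handling --- interpreting the $j=1$ instance of the first recurrence with $\mathscr{V}_0=0$ so that no spurious ${\rm \bf a}_1\divideontimes\mathscr{V}_0$ enters $\mathscr{C}_k^T$, and cleanly separating the trailing $\mathscr{U}_{k+1}$ contribution (the row ${\rm \bf a}_{k+1}\star\mathscr{E}_k$) from the square part $\mathscr{C}_k$. This is the exact analogue of the delicate step in the proof of Proposition~\ref{T-GlobalArnolproposit} and carries over almost verbatim. An alternative, arguably shorter, route is to apply $\times_3 F_{n_3}$ throughout: because $\star$, $\divideontimes$ and $\circledast$ all act frontal-slice-wise in the Fourier domain and the Normalization step is itself performed slice-wise there, each frontal slice $i$ obeys the classical (global) Golub--Kahan bidiagonalization identities for $A^{(i)}$, $V^{(i)}_j$, $U^{(i)}_j$ with scalar coefficients ${\rm \bf a}^{(i)}_j,{\rm \bf b}^{(i)}_j$; reassembling the $n_3$ slices and applying $\times_3 F_{n_3}^{-1}$ then returns the four stated relations. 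I would present the direct argument and keep the Fourier-domain reduction as an independent check.
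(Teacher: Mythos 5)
Your proposal is correct and follows the same route as the paper, which simply states that the relations ``come directly from the different steps of Algorithm~\ref{TTG-GK}''; you have merely written out in full the slice-by-slice assembly (in the style of Proposition~\ref{T-GlobalArnolproposit}) that the authors leave implicit. The recurrences you extract, the boundary handling with $\mathscr{V}_0=0$ and the trailing ${\rm \bf a}_{k+1}\star\mathscr{E}_k$ term, and the derivation of the relation for $\mathscr{B}$ from the normalization step are all consistent with the paper's definitions.
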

	where     $\mathscr{E}_{k}=\left[{\rm \bf o}  ,\ldots,{\rm \bf o},{\rm \bf e}\right]\in \mathbb{R}^{1 \times   k \times   n_{3}} $,  $\mathscr{E}_1^{(k+1)}= \left[ {\begin{array}{ {c}}
		{\rm \bf e}  \\
		{\rm \bf o}\\
		%{\rm \bf o}\\
		\vdots\\
		{\rm \bf o}
		\end{array}}\right]\in \mathbb{R}^{k+1 \times 1 \times   n_{3}}$, where   ${\rm \bf e} $ the tube fiber such that ${\rm unfold}({\rm \bf e})  =(1,0,0\ldots,0)^T$  and   ${\rm \bf o}$ denotes de zeros tube fiber of size $(1\times 1\times n_3)$ whit all entries are equal to zero.

\begin{proof}
	%{\bf Proof.}
	The proofs come directly from the different steps of Algorithm 
	\ref{TTG-GK}.
\end{proof} 

 \noindent Next, we show how to apply   the tensor tubal-global Golub-Kahan process to get approximate solutions to the tensor linear system   \eqref{syslintens01}. 
	\begin{proposition}
	Starting from the zero tensor initial guess $\mathscr{  {X}}_{0}$,  	the approximation  $\mathscr{  {X}}_{k}=  \mathbb{V}_{k}\star(\mathscr{  {Y}_{k}}\circledast \mathscr{I}_{ssn_3} )$  of the tensor linear system \eqref{syslintens01} where  $ \mathscr{  {Y}}\in \mathbb{R}^{k\times 1\times n_{3}}$  is such that 
		\begin{equation}
		||\mathscr{B}-\mathscr{A}\star \mathscr{X}_k||_F=  ||\mathscr{E}_1^{(k+1)}\star {\rm \bf a}_{1 }  - \mathscr{ \tilde{C}}_k\star\mathscr{Y}   ||_{T_{\ell_{2}}}
		\end{equation}
	\end{proposition}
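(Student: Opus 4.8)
The plan is to substitute the ansatz $\mathscr{X}_k=\mathbb{V}_k\star(\mathscr{Y}\circledast\mathscr{I}_{ssn_3})$ into the residual tensor, reduce everything to a quantity living in the coefficient space $\mathbb{R}^{(k+1)\times 1\times n_3}$, and then invoke the norm-preservation Proposition \ref{propoortho}. Since $\mathscr{X}_0$ is the zero tensor, the residual is $\mathscr{B}-\mathscr{A}\star\mathscr{X}_k=\mathscr{B}-\mathscr{A}\star\mathbb{V}_k\star(\mathscr{Y}\circledast\mathscr{I}_{ssn_3})$. First I would apply the Golub--Kahan relation $\mathscr{A}\star\mathbb{V}_k=\mathbb{U}_{k+1}\star(\widetilde{\mathscr{C}}_k\circledast\mathscr{I}_{ssn_3})$ from \eqref{equa201}, together with the identity $\mathscr{B}=\mathbb{U}_{k+1}\star((\mathscr{E}_1^{(k+1)}\star{\rm \bf a}_{1})\circledast\mathscr{I}_{ssn_3})$ and the associativity of $\star$, to write $\mathscr{B}-\mathscr{A}\star\mathscr{X}_k=\mathbb{U}_{k+1}\star((\mathscr{E}_1^{(k+1)}\star{\rm \bf a}_{1})\circledast\mathscr{I}_{ssn_3})-\mathbb{U}_{k+1}\star(\widetilde{\mathscr{C}}_k\circledast\mathscr{I}_{ssn_3})\star(\mathscr{Y}\circledast\mathscr{I}_{ssn_3})$.

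Next I would collapse the second term. By Proposition \ref{kronprop1}(2) and $\mathscr{I}_{ssn_3}\star\mathscr{I}_{ssn_3}=\mathscr{I}_{ssn_3}$, one has $(\widetilde{\mathscr{C}}_k\circledast\mathscr{I}_{ssn_3})\star(\mathscr{Y}\circledast\mathscr{I}_{ssn_3})=(\widetilde{\mathscr{C}}_k\star\mathscr{Y})\circledast\mathscr{I}_{ssn_3}$. Since $\circledast$ is linear in its first argument and $\star$ distributes over sums, the two terms combine to $\mathscr{B}-\mathscr{A}\star\mathscr{X}_k=\mathbb{U}_{k+1}\star\big((\mathscr{E}_1^{(k+1)}\star{\rm \bf a}_{1}-\widetilde{\mathscr{C}}_k\star\mathscr{Y})\circledast\mathscr{I}_{ssn_3}\big)$, where the bracketed tensor belongs to $\mathbb{R}^{(k+1)\times 1\times n_3}$.

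Finally, to take the Frobenius norm I need $\mathbb{U}_{k+1}^{T}\diamondsuit\mathbb{U}_{k+1}=\mathscr{I}_{(k+1)(k+1)n_3}$. This is the Golub--Kahan analogue of the orthogonality relation of Proposition \ref{T-GlobalArnolproposit}; it follows by induction from the recurrences of Algorithm \ref{TTG-GK} together with Proposition \ref{proprinnerprodfrob}, exactly as in the preceding proposition establishing T-orthonormality of the $\mathscr{U}_i$. Applying Proposition \ref{propoortho} with $\mathscr{V}=\mathbb{U}_{k+1}$ and coefficient tensor $\mathscr{E}_1^{(k+1)}\star{\rm \bf a}_{1}-\widetilde{\mathscr{C}}_k\star\mathscr{Y}$ then yields $\|\mathscr{B}-\mathscr{A}\star\mathscr{X}_k\|_F=\|\mathscr{E}_1^{(k+1)}\star{\rm \bf a}_{1}-\widetilde{\mathscr{C}}_k\star\mathscr{Y}\|_{T_{\ell_2}}$, which is the claim.

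I do not expect any analytic difficulty here; the only real work is bookkeeping. One must check that every mixed $\star$/$\circledast$ product above is dimensionally well defined (so that $\widetilde{\mathscr{C}}_k\star\mathscr{Y}\in\mathbb{R}^{(k+1)\times 1\times n_3}$ and the associativity rearrangements are legitimate), and one must have the $\diamondsuit$-orthonormality of $\mathbb{U}_{k+1}$ available, which, although routine, requires re-deriving the recurrences of Algorithm \ref{TTG-GK} rather than quoting an earlier statement verbatim.
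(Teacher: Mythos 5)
Your proposal is correct and follows essentially the same route as the paper: substitute the ansatz, use the Golub--Kahan relations \eqref{equa201} together with $\mathscr{B}=\mathbb{U}_{k+1}\star((\mathscr{E}_1^{(k+1)}\star{\rm \bf a}_{1})\circledast\mathscr{I}_{ssn_3})$, collapse via the mixed-product property of Proposition \ref{kronprop1}, and conclude with Proposition \ref{propoortho}. Your extra care in justifying the $\diamondsuit$-orthonormality of $\mathbb{U}_{k+1}$ is a point the paper leaves implicit, but it does not change the argument.
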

	\begin{proof}
		%  	{\bf Proof.} 
		Using  the relation \eqref{equa201},  Proposition \ref{propoortho} and the fact that  $   \mathscr{R}_0=\mathscr{B}=\mathbb{U}_{k+1}\star((\mathscr{E}_1^{(k+1)}\star {\rm \bf a}_{1 }  )\circledast \mathscr{I}_{ssn_3} ) $,  we  get
		\begin{align*}
		||\mathscr{R}_{0}-\mathscr{A}\star \mathscr{X}_k||_F&=||\mathbb{U}_{k+1}\star((\mathscr{E}_1^{(k+1)}\star {\rm \bf a}_{1 }  )\circledast \mathscr{I}_{ssn_3} ) - \mathbb{U}_{k+1} \star(\mathscr{ \tilde{C}}_m\circledast \mathscr{I}_{ssn_3}) \star(\mathscr{  {Y}_{k}}\circledast \mathscr{I}_{ssn_3} ))  ||_{F}\\
		&=||\mathbb{U}_{k+1}\star((\mathscr{E}_1^{(k+1)}\star {\rm \bf a}_{1 }  )\circledast \mathscr{I}_{ssn_3} ) -( \mathscr{ \tilde{C}}_k\star\mathscr{Y}))  \circledast \mathscr{I}_{ssn_3} ||_F\\
		&=||\mathscr{E}_1^{(k+1)}\star {\rm \bf a}_{1 }  -( \mathscr{ \tilde{C}}_k\star\mathscr{Y}))    ||_{T_{\ell_{2}}},
		\end{align*}
		which ends the proof.
	\end{proof}
\noindent 	The approximate solution $\mathscr{X}_k$ produced by this process is given by
	$$\mathscr{  {X}}_{k}=  \mathbb{V}_{k}\star(\mathscr{  {Y}_{k}}\circledast \mathscr{I}_{ssn_3} ),$$
	where ${\mathscr Y}_k$ solves the low-order minimization problem
	$$ \min_{ \mathscr{  {Y}}\in \mathbb{R}^{k\times 1\times n_{3}}}|| ( \mathscr{E}_1^{(k+1)}\star {\rm \bf a}_{1 }  - \mathscr{ \tilde{C}}_k\star\mathscr{Y})    ||_{T_{\ell_{2}}}.$$
	The following algorithm summarizes the main steps to solve the least squares  tensor problem \eqref{syslintens01} using  the tensor tubal-global Golub Kahan. 
	%\newpage
	\begin{algorithm}[H]
	 	\begin{algorithmic}[1]
		%	\begin{enumerate}
		\REQUIRE $\mathscr{A}\in \mathbb{R}^{n_1\times n_2 \times n_3}$,  $\mathscr{B},\,\in \mathbb{R}^{n_1\times s \times n_3}$, ${\text{kmax}} $  the maximum number of iterations,     and a tolerance $tol$.
			 \ENSURE $\mathscr{X}_k\in \mathbb{R}^{n_2\times s \times n_3}$ the approximate    solution  of the problem  (\ref{syslintens01}). 
		%	\STATE Compute $\mathscr{R}_{0}=\mathscr{B}-\mathscr{A}\star\mathscr{X}_{0} $.
			\FOR  {$k=1,\ldots,{\text{kmax}}$}
		%	ù\begin{enumerate}
				\STATE  Apply Algorithm \ref{TTGA} to compute  $\mathbb{V}_{k}$ and $\mathscr{ \widetilde {H}}_k$.
			\STATE Compute $\mathscr{Y}_k=  \displaystyle \text{arg } \min_{ \mathscr{  {Y}}\in \mathbb{R}^{k\times 1\times n_{3}}}|| (\mathscr{E}_1^{(k+1)}\star {\rm \bf a}_{1 }  - \mathscr{ \tilde{C}}_k\star\mathscr{Y})    ||_{T_{\ell_{2}}}	$	
				and  the approximate solution  $\mathscr{X}_{k}= \mathbb{V}_{k}\star(\mathscr{  {Y}}_{k}\circledast \mathscr{I}_{ssn_3} ).
				$	%	ù%	\end{enumerate}
		\STATE	$~~~~$ \textbf{if}$||\mathscr{R}_{k}||_F<tol$
		
	  $~~~~~~~~$ 	\textbf{Stop}.
		\STATE	$~~~~$ \textbf{end if}
			\ENDFOR	%	\item End
		\end{algorithmic}
		\caption{ Tensor Tubal-Global  Golub Kahan (TTGK)}
		\label{TTGkh}
\end{algorithm}
	
\section{Numerical experiments }
\noindent In this section we present some numerical tests for  the  tensor tubal-global GMRES     method  when solving  linear tensor problems  \eqref{syslintens0} and we give some comparisons with the tensor global GMRES method define in \cite{elguide1}.  We used only benchmark examples to test the proposed algorithms. All computations were carried out using the MATLAB R2018b environment with  an  Intel(R)  Core i7-8550U CPU $@ 1.80$ GHz  and processor  8 GB. 
\noindent  The   stopping criterion was   $$\frac{||\mathscr{R}_k||_{T_{\ell_{2}}}}{||\mathscr{R}_0||_{T_{\ell_{2}}}} <\epsilon,  $$ where $\epsilon=10^{-6}$ is a chosen tolerance and $\mathscr{R}_k$ the m-th residual associated to    the approximate solution  $\mathscr{X}_k$.  In all the presented tables, we reported the obtained residual norms to achieve the desired convergence, the iteration number and the corresponding cpu-time. 
 \noindent We compared the required cpu-time (in seconds) to achieve the convergence  for   Algorithm \ref{TTGGMRES}     
and T-GGMRES(m),   the  restarted tensor global GMRES introduced in \cite{elguide1}.
	%	\item \textbf{Algorithm \ref{TTGkh}}:  Tensor Tubal-Global  Golub Kahan (TTGK) .}
\subsection{Example1}
\noindent  The tensor $\mathcal{  {A}}$ is  constructed using $n_3$ frontal  slices. In  this  example,  the frontal  slices are  of  size $n\times n$ and  given as  follows $$A_i={\rm eye}(n)+\frac{i}{2\sqrt{n}} {\rm rand}(n)\;\;\;i=1,\ldots,n_3$$
\noindent The 
right-hand side tensor $\mathcal {B}$ is  constructed   such that the exact
solution $\mathcal{  {X}}^*$ of the tensor linear  equation (\ref{syslintens0}) is  given  by $\mathcal{  {X}}^*={\rm ones}(n,s,n_3).$.  The integer $m$  denotes  the restarted parameter for  the   restarted tensor tubal-global GMRES and also for the restarted  tensor global GMRES.
\begin{table}[H]
	\caption{Results for Example 1. $\epsilon=10^{-6}$, $s=5$ and  $n_3=4$} {
%	\begin{center}
		\begin{tabular}{lcccc}
			\hline  Method & n & $\#$ its. & $\frac{||\mathscr{R}_k||_{T_{\ell_{2}}}}{||\mathscr{R}_0||_{T_{\ell_{2}}}}$    & cpu-time in seconds \\
			\hline   Algorithm \ref{TTGGMRES} &{$500$}&  3 & $1.06 \times 10^{-6}$  & \textbf{0.55}\\% & & & 6 &3 & $ 7.19\times 10^{-14}$ & 0.63  \\ 
			T-GGMRES&{$500$}&  5 & $1.52 \times 10^{-6}$  & 0.75 \\
		%	Algorithm \ref{TTGkh} &{$500$}&  -& 8 & $1.31 \times 10^{-6}$ &   1.45 \\  % & & & 6 &4 & $5.34\times 10^{-14}$ & 1.02  \\   
	 	\hline  Algorithm \ref{TTGGMRES} &{$1000$}&  3 & $1.26 \times 10^{-6}$  & \textbf{1.53}\\ %& & & 6 &4 & $ 7.19\times 10^{-15}$ & 3.74  \\  
			T-GGMRES  &{$1000$}&  6 & $2.02 \times 10^{-6}$  & 1.92\\
		%	Algorithm \ref{TTGkh} &{$1000$}&  - & 7 & $2.36 \times 10^{-6}$ & 4.75 \\  % & & & 6 &4 & $ 8.72\times 10^{-12}$ & 3.75   \\ 
			\hline  Algorithm \ref{TTGGMRES} &{$1500$}&  3 & $3.91 \times 10^{-6}$  & \textbf{3.60}\\% & & & 6 &4 & $ 7.19\times 10^{-15}$ & 3.74  \\  
			T-GGMRES(m)   &{$1500$}&  6 & $2.02 \times 10^{-6}$  & 4.02\\
		%	Algorithm \ref{TTGkh} &{$1500$}&  - & 6 & $1.57 \times 10^{-6}$ &   10.19     %& & & 6 &4 & $ 6.19\times 10^{-12}$ & 11.11 
		%	\\ 
			\hline  
		\end{tabular}
	}
	\label{tab1}
	%\end{center}
\end{table}
\noindent In Table \ref{tab1}, we reported the obtained relative residual norms, the total number of required iterations to achieve the convergence and the corresponding cpu-times for  the tensor tubal-global GMRES(m) as compared to the   tensor  global  GMRES(m) for different values of the restarted parameter $m$. We gave different values of $n$ and used $m=10$ to fix the size of the projected subspace and to restart the two algorithms. As shown in this table, the    tensor tubal-global GMRES return better results and  	this is due to the fact that it needs lower iterations to achieve the convergence.

\subsection{Example 2}

\noindent  In  this example,  the tensor $\mathscr{A} $ of  size $m_0^2\times m_0^2\times n_3$,   is  the Laplacian tensor constructed by using      the 7-point  discretization of the  three-dimensional Poisson equation (\ref{poiss0})  given  by
\begin{equation}\label{poiss0}
\begin{cases}
-\nabla^2\nu=f, & \varOmega= \left\lbrace  (x,y,z)=0<x,y,z<1\right\rbrace ,\\
\; \;\;\nu=0 &  \text{on }\partial\varOmega, 
\end{cases}
\end{equation}
with  $$\nabla^2\nu=\frac{\partial^2\nu}{\partial x^2}+\frac{\partial^2\nu}{\partial y^2}+\frac{\partial^2\nu}{\partial z^2}. $$  
The right hand  tensor $\mathscr{B} $ was  constructed such  that $\mathscr{B}=\mathcal{  {A}}\star\mathcal{  {X}}^*$  where $\mathcal{  {X}}^*={\rm ones}(m_0^2,s,n_3)$. 
\noindent 	The   mesh step   size is  given by : $\Delta x=\Delta y=\Delta z=h=\frac{1}{(m_0^2+1)} $ where  $\Delta x$, $\Delta y$,
$\Delta z$  are the  step  sizes in  the  x-direction, y-direction and  z-direction, respectively. 
\noindent	Applying the difference formula obtain by the standard central difference approximation gives 
\begin{equation}\label{poiss01}6\nu_{ijk}-\nu_{i-1jk}-\nu_{i+1jk}-\nu_{ij-1k}-\nu_{ij+1k}-\nu_{ijk-1}-\nu_{ijk+1}=h^3 f_{ijk}. \end{equation}  
\noindent	The Laplacian tensor $\mathscr{A} $ can  be 
obtained from the   central difference approximations (\ref{poiss01})  in several forms.   Here,  $\mathscr{A} $ can  be expressed as  a  third order tensor  as  follows 
$${\rm unfold}(\mathscr {A} ) = \left [A_{1}\; A_{2}\; \ldots\; A_{n_{3}} \right ]^T,$$
where 
$$  A_{i-1}=A_{i+1}=  \displaystyle \frac{-1}{h^{3}} \, \begin{pmatrix} 
0 &0 &\ldots&0 &0 \\
0& 1 &0&0 &0 \\
\vdots& \ddots &\ddots&\ddots&\vdots\\
0& \ldots &&1 &0  \\
0& \ldots &0&0 &0
\end{pmatrix}\; {\rm and}\; 
%~~~~~~~~~~~~~~~~~~~~~~~~~~~~~~~~~~~~~~~~~~~~~~~~~~~~~~~~~~~~~~~~~%~
A_{i}=  \displaystyle \frac{-1}{h^{3}}\begin{pmatrix} 
0 &-1 &\ldots&0 &0 \\
-1& 6 &-1&0&0 \\
\vdots& \ddots &\ddots&\ddots&\vdots\\
0& \ldots &-1&6  &-1 \\
0& \ldots &0&-1  &0
\end{pmatrix}. 
% \end{cases}
$$
\textcolor{red}{ \begin{table}[H]
	\caption{Results for Example 2.  $n_3=m_0^2$, $s=3$ and   $\epsilon=10^{-6}$ } {
%	\begin{center}
		\begin{tabular}{lccccc}
			\hline method &$m$& Size $(m_0^2\times m_0^2\times m_0^2)$ &  $\#$ Iter.   &  $\frac{||\mathscr{R}_k||_{T_{\ell_{2}}}}{||\mathscr{R}_0||_{T_{\ell_{2}}}}$   &cpu-time (in seconds) \\
			\hline 
			Algorithm    \ref {TTGGMRES} &$10$& {$100\times 100\times 100$}&    2& $9.73 \times 10^{-6}$ & \textbf{ 0.52 }    \\ 
			T-GGMRES&$10$& {$100\times 100\times 100$}&    8& $3.85 \times 10^{-6}$ &  0.74    \\ 
	 %	Algorithm     \ref{TTGkh} &$-$& {$100\times 100\times 100$}&    13& $ 2.46 \times 10^{-6}$ &  3.33   \\ 
			%& & 6 & 12& $3.12 \times 10^{-12}$ &  0.73      \\ 
			\hline
			Algorithm    \ref {TTGGMRES} &$10$& {$144\times 144\times 144$}&    3& $9.73 \times 10^{-6}$ &  \textbf{1.09 }    \\ 
			T-GGMRES&$10$& {$144\times 144\times 144$}&    7& $1.86 \times 10^{-6}$ &  1.20  \\
	 %	Algorithm     \ref{TTGkh} &$-$& {$144\times 144\times 144$}&    12& $ 1.86 \times 10^{-6}$ &   7.54  \\ 
			%& & 6 & 12&  $4.42 \times 10^{-12}$  &2.04     \\ 
			\hline
			Algorithm \ref {TTGGMRES}&$15$&{$225\times 225\times 225$}&    2&  $ 1.51\times 10^{-6}$  &\textbf{2.55}  \\ 	
			T-GGMRES&$15$& {$225\times 225\times 225$}&    6& $3.85 \times 10^{-6}$ &  5.50    \\ 	 
			\hline
		\end{tabular}
	}
	\label{tab2}
%	\end{center}
\end{table}}

\medskip \noindent Table \ref{tab2} reports on the obtained relative residual norms and the corresponding cpu-time to obtain the desired convergence. As can be seen from this table, the tensor tubal-global GMRES method gives good results  as compared to the  tensor global GMRES. 
We didn't report here the results  for Goulb-Kahan because the algorithm  generally performs well  when solving least-squares equations  with non square problems.

\section{Conlusion} In this paper, we presented some new Krylov subspace methods using the T-product and some new other tensor products. We gave new algebraic properties of these products that allowed us to build new tensor Krylov-subspace based algorithms for solving  tensor equations. We focussed on the tubal-global GMRES and the tubal-global Golub-Kahan algorithms. Some numerical tests on simple examples are also reported.

\section*{Acknowledgements}
The authors would like to thank the editor and anonymous referees for their valuable 
suggestions and constructive comments which improved the quality of the paper.%
%Smith~CE. The significance of mosquito longevity and blood-feeding behaviour in
%the dynamics of arbovirus infections. Med Biol. 1975;53:288--294.
 	
%\section*{References}

 % \bibliographystyle{tfnlm}
  %\bibliography{BeikNJ}

\begin{thebibliography}{99}

		\bibitem{klimer3}
	Kilmer M, Braman K, Hao N, et al. Third-Order Tensors as Operators on Matrices: A Theoretical and Computational Framework with Applications in Imaging. SIAM Journal on
	Matrix Analysis and Applications. 2013;34:148--172.
	
	\bibitem{klimer2}
	Kilmer M, Martin C. Factorization strategies for third-order tensors. Linear Algebra and
	Its Applications. 2011; 435:641--658..
	
	\bibitem{Jbiloubeik1}
	Najafi-Kalyani M, Beik F, Jbilou K. On global iterative schemes based on Hessenberg process for (ill-posed) Sylvester tensor equations. Journal of Computational and Applied
	Mathematics. 2020;373:112--216. 
	
		\bibitem{kolda}
	Kolda TG, Bader BW. Tensor Decompositions and Applications. SIAM review. 2009;
	51(3):455--500.
	
	\bibitem{elguide1}
	El Guide M,  El Ichi A,  Jbilou K,  Sadaka R. Tensor Krylov subspace methods via the T-product for color image processing. 	Preprint arXiv:2006.07133.
		
	%\bibitem{kolda}
	%Kolda TG, Bader BW. Tensor decompositions and applications. SIAM review. 2009;
	%51(3):455--500.
	

	
	\bibitem{elguide2}
		El Guide M,  El Ichi A,  Jbilou K, Beik  F.P.A. 
		Tensor GMRES and  Golub-Kahan Bidiagonalization methods via  the  Einstein product with applications to image and  video processing.  arXiv preprint arXiv:2005.07458.
	.
 	   
 	   
 	   \bibitem{lb}
 	  De Lathauwer L, de Baynast A. Blind Deconvolution of DS-CDMA Signals by Means of Decomposition in Rank-(1,L,L) Terms. IEEE Transactions on Signal Processing. 2008;
 	  56:1562--1571. 
 	   
 	  \bibitem{lxnm} 
 	  Li X, Ng M. Solving sparse non-negative tensor equations: algorithms and applications.
 	  Frontiers of Mathematics in China. 2015;10:649--680. 

  
   \bibitem{smile}
   Smilde A, Bro  R, Geladi P. Multi-way Analysis: Applications in the Chemical Sciences. Wiley; 2004..
   
 	\bibitem{dwwm}
Ding, W, Wei Y . Solving Multi-linear Systems with $\mathcal{M}$-Tensors. Journal of
Scientific Computing. 2016;68:689--715.
	
		\bibitem{Jbilou}
	Jbilou K, Messaoudi A, Sadok H. Global FOM and GMRES algorithms for matrix equations.
	Applied Numerical Mathematics. 1999;31:49--63.
	
	
	
	\bibitem{braman}
	Braman K. Third-order tensors as linear operators
	on a space of matrices. Linear Algebra
	and its Applications. 2010;433:1241--1253.
	
	\bibitem{lu}
	Lu C, Feng J, Chen Y, et al. Tensor Robust Principal Component Analysis with a New Tensor Nuclear Norm. IEEE Transactions On Pattern Analysis and Machine Intelligence.
	2019;42:925--938.
	
		\bibitem{miaoTfunction}
	Miao Y,  Qi L,   Wei Y.  Generalized tensor function via the tensor singular value decomposition based on the T-product. Linear Algebra and its Applications.  2020;590:258--303.
	
	
	\bibitem{Jbilousadaka}  
	 Bouyouli R, Jbilou K, Sadaka R, et al. Convergence properties of some block Krylov subspace methods for multiple linear systems. Journal of Computational and Applied
	Mathematics. 2006;196:498--511. 	
	

	
 
	

	
	
\end{thebibliography}
\end{document}